\newcommand{\N}{\mathbb{N}}
\newcommand{\Z}{\mathbb{Z}}
\newcommand{\R}{\mathbb{R}}
\newcommand{\T}{\mathbb{T}}
\newcommand{\Sp}{\mathbb{S}}
\newcommand{\Leb}[1]{L^{#1}}
\newcommand{\Cont}[1]{C^{#1}}
\newcommand{\Lebl}[1]{L^{#1}_{\mathrm{loc}}}
\newcommand{\Hil}[1]{H^{#1}}
\newcommand{\norma}[2]{\lVert#1\rVert_{#2}}
\newcommand{\Dis}{\mathcal{D}}
\newcommand{\Ker}{\mathrm{Ker}}
\newcommand{\pp}{\mathbf{p}}
\newcommand{\qq}{\mathbf{q}}
\newcommand{\LL}{\mathbf{S}}
\newcommand{\BB}{\mathbf{B}}
\newcommand{\CC}{\mathbf{C}}
\newcommand{\FF}{\mathbf{F}}
\newcommand{\PP}{\mathbf{P}}
\newcommand{\TT}{\mathbf{T}}
\newcommand{\Fspace}{\mathfrak{F}}
\newcommand{\function}[5]{\begin{eqnarray*}
		#1:#2 & \rightarrow & #3 \\
		   #4 & \mapsto     & #5
\end{eqnarray*}}
\newcommand{\velocity}{\mathbf{u}}
\newcommand{\aux}{\mathbf{m}}
\newcommand{\vect}{\mathbf{v}}
\newcommand{\map}{\mathbf{x}}
\newcommand{\mapy}{\mathbf{y}}
\newcommand{\Jacobian}{\mathbf{J}}
\newcommand{\BSO}{\mathbf{BS}}
\newcommand{\InterphaseMix}{\Omega_{\mathrm{mix}}}
\newcommand{\Kconstrain}{\mathcal{K}}
\newcommand{\Sub}{\mathbf{X}_0}
\newcommand{\SSub}{\mathbf{X}}
\newcommand{\Qfunc}[1]{\{#1\}_{\alpha}}
\newcommand{\Tfunc}{\mathscr{T}}
\newcommand{\Sfunc}{\mathscr{S}}
\newcommand{\Efunc}{\mathscr{E}}
\newcommand{\Dfunc}{\mathbf{D}}
\newcommand{\Gfunc}{\mathbf{G}}
\newcommand{\Hfunc}{\mathbf{H}}
\newcommand{\Velocity}{\mathbf{v}}
\newcommand{\Aux}{\mathbf{m}}
\newcommand{\Domain}{\mathscr{D}}
\newcommand{\DomainP}{{\mathscr{U}_{\mathrm{per}}}}
\newcommand{\DomainPy}{{\mathscr{U}'_{\mathrm{per}}}}
\newcommand{\expected}[1]{\langle #1\rangle}
\newcommand{\dif}{\,\mathrm{d}}
\newcommand{\Div}{\nabla\cdot}
\newcommand{\Curl}{\nabla^\perp\cdot}
\newcommand{\car}[1]{\mathbbm{1}_{#1}}
\newcommand{\dist}[2]{\mathrm{dist}(#1,#2)}
\newcommand{\almost}{\textrm{a.e.}\,}
\def\Xint#1{\mathchoice
	{\XXint\displaystyle\textstyle{#1}}%
	{\XXint\textstyle\scriptstyle{#1}}%
	{\XXint\scriptstyle\scriptscriptstyle{#1}}%
	{\XXint\scriptscriptstyle\scriptscriptstyle{#1}}%
	\!\int}
\def\XXint#1#2#3{{\setbox0=\hbox{$#1{#2#3}{\int}$}
		\vcenter{\hbox{$#2#3$}}\kern-.5\wd0}}
\def\dashint{\Xint-}  %mean integral
\theoremstyle{theorem}
\newtheorem{thm}{Theorem}[section]
\newtheorem{prop}{Proposition}[section]
\newtheorem{cor}{Corollary}[section]
\newtheorem{lema}{Lemma}[section]
\theoremstyle{definition}
\newtheorem{defi}{Definition}[section]
\theoremstyle{remark}
\newtheorem{Rem}{Remark}[section]
\title{Degraded mixing solutions for the Muskat problem}
\author{\'A. Castro, D. Faraco, F. Mengual}
\begin{document}
	
\maketitle

\begin{abstract} We prove the existence of infinitely many mixing solutions for the Muskat problem in the fully unstable regime displaying a linearly degraded macroscopic behaviour inside the mixing zone. In fact, we estimate the volume proportion of each fluid in every rectangle of the mixing zone.
The proof is a refined version of the convex integration scheme presented in \cite{Onadmissibility,RIPM} applied to the subsolution in \cite{Mixing}. 
More generally, we obtain a quantitative h-principle for a class of evolution equations which shows that, in terms of weak*-continuous quantities, a generic solution in a suitable metric space essentially behaves like the subsolution. 
This applies of course to linear quantities, and in the case of IPM to the power balance $\PP$ \eqref{Power} which is quadratic.
As further applications of such quantitative h-principle we discuss the case of vortex sheet for the incompressible Euler equations.
\end{abstract}

%\tableofcontents

\pagenumbering{arabic} 
\setcounter{page}{1}
\section{Introduction}\label{sec:Introduccion}

We study the dynamic of two incompressible 
fluids with constant densities $\rho^{\pm}$ and viscosities $\nu^{\pm}$, moving through a 2-dimensional porous media with permeability $\kappa$, under the action of gravity $\mathbf{g}=-(0,g)$. In this work we assume $\nu^{\pm}=\nu$, $\rho^+>\rho^-$ and $\kappa$ constant, and we denote $\vartheta=g\tfrac{\kappa}{\nu}$. This can be modelled (\cite{Mus37}) by the \textbf{IPM} (Incompressible Porous Media) system 
\begin{align}
\partial_t\rho+\nabla\cdot(\rho\velocity) & = 0, \label{IPM:1}\\
\Div\velocity & = 0,\label{IPM:2}\\
\tfrac{\nu}{\kappa}\velocity & = -\nabla p+\rho\mathbf{g}, \label{IPM:3}
\end{align}
in $\R^2\times(0,T)$,
where \eqref{IPM:1} represents the mass conservation
law, \eqref{IPM:2} the incompressibility, and \eqref{IPM:3} is Darcy's law, which relates the velocity of the fluid $\velocity$ with the forces (the pressure $p$ and the gravity $\mathbf{g}$) acting on it,
coupled with the \textbf{Muskat type initial condition} 
\begin{equation}\label{initialdensity}
\rho|_{t=0}=\rho_0=\rho^{+}\car{\Omega_{+}(0)}+\rho^{-}\car{\Omega_{-}(0)}.
\end{equation}
Without loss of generality we may assume $\rho^+=-\rho^-=\varrho>0$ (see sec. \ref{sec:CI}).
We focus on the situation when initially one of the fluids lies above the other, i.e., there is a function $f_0\in\Cont{1,\alpha}(\R)$ so that the initial interface is $\partial\Omega_{\pm}(0)=\mathrm{Graph}(f_0)=\{(s,f_0(s)): s\in\R\}$. 
The \textbf{Muskat problem} describes the evolution of the system \eqref{IPM:1}-\eqref{initialdensity} under the assumption that the fluids remain in contact at a moveable interface which divides $\R^2$ into two connected regions  $\Omega_{\pm}(t)$, i.e. $\partial\Omega_{\pm}(t)=\mathrm{Graph}(f(t))$, which turns out a Cauchy problem for $f$. 
If the heaviest fluid stays down, \textbf{fully stable regime}, 
%figura \ref{fig:Estable}), 
such Cauchy problem is well-posed in Sobolev spaces (see \cite{Contour} for $\Hil{3}$ and \cite{CGS16,CGSV17,Mat16} for improvements of the regularity), whereas if the heaviest fluid stays on the top, \textbf{fully unstable regime}, it is ill-posed (see \cite{CCFL12} for $\Hil{4}$ and \cite{Contour} for $\Hil{s}$ with $s>3/2$ for small initial data). In spite of this, the existence of weak solutions of \eqref{IPM:1}-\eqref{initialdensity} in the fully unstable regime has been proved recently (see \cite{RIPM,Mixing,Piecewise} and also \cite{O}) by replacing the continuum free boundary assumption with the opening of a ``mixing zone'' where the fluids begin to mix ``indistinguishably'' (mixing solutions). In order to prevent misunderstanding we call the existence and properties of such mixing solutions the \textbf{Muskat-Mixing problem}. In \cite{Mixing} the authors define a \textbf{mixing zone} as
\begin{equation}\label{IMix}
\InterphaseMix(t)=\map(\R\times(-1,1),t),\quad t\in(0,T],
\end{equation}
and also $\InterphaseMix=\cup_{t\in(0,T]}\InterphaseMix(t)\times\{t\}$ from the map
\function{\map}{\R\times[-1,1]\times[0,T]}{\R^2}{(s,\lambda,t)}
{(s,f(s,t)+c\lambda t)}
where $f\in\Cont{}([0,T];\Hil{4}(\R))$ is a suitable evolution of $f_0\in H^5(\R)$ (see \cite[(1.11)]{Mixing}) and $c>0$ is the speed of growth of the mixing zone. At each $t\in(0,T]$, the mixing zone $\InterphaseMix(t)$
splits $\R^2$ into two open connected sets $\Omega_{\pm}(t)$ defined from
$\partial\Omega_{\pm}(t)=\map(\R,\pm 1,t)=\mathrm{Graph}(f(t))\pm (0,ct)$.
%About the main curve $\zcurve$, we shall suppose that there are $k,\varrho>0$ such that
%$$\zcurve-\bar{\zcurve}_0\in\Cont{0}([0,T];\Hil{k+1}(\R)),\quad
%\frac{\zcurve_s^1}{|\zcurve_s|}\geq\varrho, %(s,t)\in\R\times[0,T].
%$$
%(these properties are satisfied in theorem \ref{thmprincipal}). 
Notice
%the Jacobian of $\map(t)$ is
$\map\in\Cont{}((0,T];\mathrm{Diff}^1(\R\times(-1,1);\InterphaseMix(t)))$
because the Jacobian is $\Jacobian_{\map(t)}(s,\lambda)=ct$.
%and $\map(t)$ defines a change of variables denoted by $f^\sharp(s,\lambda,t)=f(\map(s,\lambda,t),t)$ for $f:\R^2\times(0,T]\rightarrow\R^N$.
We recall the definition of mixing solution introduced in \cite{Mixing}.
\begin{defi}[Mixing solution, \cite{Mixing}]\label{mixing} A pair $\rho,\velocity\in\Leb{\infty}([0,T];\Leb{\infty}(\R^2))$
is a \textbf{mixing solution} for the map $\map$ if it is a weak solution of IPM (see \cite[def. 2.1]{Mixing}) satisfying:
\begin{enumerate}[(a')]
\item\label{DMS1o} 
$$\left\lbrace
\begin{array}{ll}
\rho=\pm\varrho & \textrm{a.e. in }\Omega_{\pm},\\
|\rho|=\varrho & \textrm{a.e. in }\InterphaseMix.
\end{array}\right.$$
\item\label{DMS2o} Mix in space-time: For every (space-time) open ball $B\subset\InterphaseMix$,
$$\int_{B}(\varrho-\rho(x,t))\dif x\dif t\int_{B}(\varrho+\rho(x,t))\dif x\dif t\neq0.$$
\end{enumerate} 
\end{defi}
The property \ref{DMS2o} predicts mixing in every (space-time) ball, but it does not give information about the volume proportion of each fluid. As it stands it does not exclude that  arbitrarily close to  $\Omega_+$ could be a sufficiently big ball with 99\% of $\rho_-$. In spite of the stochastic nature of the mixing phenomenon, this is obviously unrealistic from the experiments. In fact, as we shall explain after theorem \ref{thmprincipal}, we find natural to obtain mixing solutions displaying a linearly degraded macroscopic behaviour. In addition, we take care of replacing ``space-time'' by the stronger and more suitable version ``space at each time slice''.\\ 
%We note that although (the $\Leb{p}$-version of) \cite[lemma 8]{Onadmissibility} allows to redefine the mixing solutions on a set of times to measure zero to obtain $\rho,\velocity\in\Cont{0}([0,T];\Leb{\infty}_{w^*}(\R^2))$, this is not enough
\indent\textbf{The main result.} The aim of this paper is to prove that the Muskat-Mixing problem admits (infinitely many) solutions: continuous in time, mixing in space at each time slice and displaying a linearly degraded macroscopic behaviour. We call them degraded mixing solutions.
In fact, we have obtained an estimate of the volume proportion in every rectangle of $\InterphaseMix(t)$ at each time slice. Due to Lebesgue differentiation theorem, the error in this estimate depends on the size of the rectangles.
For the suitable definition we shall consider arbitrary $\alpha\in[0,1)$, space-error functions $\Sfunc\in\Cont{0}([0,1];[0,1])$ and time-error functions $\Tfunc\in\Cont{0}([0,T];[0,1])$ %with $\Tfunc(0)=0$ and $\Tfunc(t)>0$ for $t\in(0,T]$,
with $\Sfunc(0)=\Tfunc(0)=0$ and $\Sfunc(r),\Tfunc(t)>0$ for $r,t>0$. 
For instance
\begin{equation}\label{TSexample}
\Tfunc_{\epsilon}(t)=\epsilon e^{-\frac{1}{\epsilon}\left(\frac{1}{t}+t\right)},\quad
\Sfunc_{\varepsilon}(s)=\varepsilon e^{-\frac{1}{\varepsilon s}},
\end{equation} 
for $\varepsilon,\epsilon>0$ arbitrarily small.
We define 
\begin{equation}\label{Efuncdef}
\Efunc(\lambda,t)=\Sfunc(1-|\lambda|)\Tfunc(t),
\quad\Qfunc{A}=\frac{1\wedge|A|^\alpha}{|A|},
\end{equation}
where $(\lambda,t)\in(-1,1)\times(0,T]$, $|A|$ denotes the area of measurable sets $A$ in $\R^2$ and $\wedge$ the minimum between two quantities.\newpage
\begin{Rem}
The function $\Efunc$ has been introduced to show that the error in the estimate of the volume proportion also depends on the distance to the (space-time) boundary of the mixing zone. The parameter $\alpha$ has been introduced to refine this estimate for small rectangles. However, the case $\Efunc$ constant and $\alpha=0$ contains relevant information about the degraded mixing phenomenon, and it is easier to understand in a first reading.
\end{Rem}
\begin{defi}[Degraded mixing solution]\label{degradada} We say that $\rho,\velocity\in\Cont{0}([0,T];\Leb{\infty}_{w^*}(\R^2))$
is a \textbf{degraded mixing solution} for the map $\map$ of degree $(\alpha,\Efunc)$ if it is a weak solution of IPM such that, at each $t\in[0,T]$, it satisfies:
\begin{enumerate}[(a)]
\item\label{DMS1} 
$$\left\lbrace
\begin{array}{ll}
\rho(t)=\pm\varrho & \textrm{a.e. in }\Omega_{\pm}(t),\\
|\rho(t)|=\varrho & \textrm{a.e. in }\InterphaseMix(t).
\end{array}\right.$$
\item\label{DMS2} Mix in space at each time slice: For every non-empty (bounded) open $\Omega\subset\InterphaseMix(t)$, 
$$	\int_{\Omega}(\varrho-\rho(x,t))\dif x\int_{\Omega}(\varrho+\rho(x,t))\dif x\neq0.$$
\item\label{DMS3} Linearly degraded macroscopic behaviour: For every non-empty bounded rectangle $Q=S\times L\subset\R\times(-1,1)$,
$$\left|\dashint_{\map(Q,t)}\rho(x,t)\dif x-\expected{L}\varrho\right|\leq\Efunc(\expected{L},t)\Qfunc{\map(Q,t)}$$
where $\expected{L}=\dashint_L\lambda\dif\lambda\in(-1,1)$ is the center of mass of $L$.
\end{enumerate} 
\end{defi}
\begin{Rem}\label{contourlines} The new condition \ref{DMS3} implies a nice property. For $\lambda\in(-1,1)$ and $0<\delta<1$ consider the rectangle $Q_R^\delta(\lambda)=(-R,R)\times(\lambda-\frac{1}{R^\delta},\lambda+\frac{1}{R^\delta})$. This fits the contour line $\R\times\{\lambda\}$ when $R\rightarrow\infty$. Then, such degraded mixing solutions display a \textbf{perfect linearly degraded macroscopic behaviour on contour lines} $\map(\R,\lambda,t)$
\begin{equation}\label{perfectmixline}
\lim_{R\rightarrow\infty}\dashint_{\map(Q_R^\delta(\lambda),t)}\rho(x,t)\dif x=\lambda\varrho
\end{equation}
uniformly in $\lambda\in(-1,1)$ and $t\in(0,T]$.
\end{Rem}

Our main result is the following.

\begin{thm}\label{thmprincipal} Let $f_0\in\Hil{5}(\R)$, $\Efunc$ from \eqref{Efuncdef} and $\alpha\in[0,1)$. Then, the Muskat-Mixing problem for the initial interface $\mathrm{Graph}(f_0)$ and speed of growth $0<c<2\vartheta\varrho$ admits infinitely many degraded mixing solutions for the map $\map$ of degree $(\alpha,\Efunc)$.
\end{thm}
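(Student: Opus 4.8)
The plan is to run a refined convex integration scheme on top of the subsolution constructed in \cite{Mixing}, but carried out \emph{inside} a complete metric space of subsolutions that already builds in the quantitative estimate \ref{DMS3}, and then to extract infinitely many exact solutions by a Baire category argument — this argument being the prototype for the general quantitative h-principle. First, recall from \cite{Mixing} that for $f_0\in\Hil{5}(\R)$ and $0<c<2\vartheta\varrho$ one has the evolution $f\in\Cont{0}([0,T];\Hil{4}(\R))$, the map $\map$, the mixing zone $\InterphaseMix$, and a subsolution $(\bar\rho,\bar\velocity,\bar m)\in\Cont{0}([0,T];\Leb{\infty}_{w^*}(\R^2))$ of IPM: a solution of the associated differential inclusion in which, inside $\InterphaseMix$, the pointwise constraint $(\rho,\rho\velocity)\in\mathcal K$ (with $\mathcal K$ encoding $\rho=\pm\varrho$ and Darcy's law) is relaxed to an inclusion into the interior of the $\Lambda$-convex hull $\mathcal K^{\Lambda}$. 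I would take the subsolution with \emph{linear density profile} $\bar\rho(\map(s,\lambda,t),t)=\lambda\varrho$ (equal to $\pm\varrho$ on $\Omega_{\pm}$), whose ``gap'' $\gamma(x,t)=\mathrm{dist}\big((\bar\rho,\bar m)(x,t),\partial\mathcal K^{\Lambda}\big)$ is positive on $\mathrm{int}\,\InterphaseMix$ and tends to $0$ as $|\lambda|\to1$ and as $t\to0^{+}$. Since $\Jacobian_{\map(t)}=ct$, a change of variables gives $\dashint_{\map(Q,t)}\bar\rho\dif x=\expected{L}\varrho$ \emph{exactly} for every rectangle $Q=S\times L$, so the subsolution already meets \ref{DMS3} with zero error.

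Next, for the given $\alpha$ and $\Efunc=\Sfunc(1-|\lambda|)\Tfunc(t)$, I would introduce the set $X$ of subsolutions $(\rho,\velocity,m)$ of the relaxed inclusion that agree with $(\bar\rho,\bar\velocity,\bar m)$ outside a fixed neighbourhood of $\InterphaseMix$ (hence share $\rho_0$ and the macroscopic geometry), satisfy \ref{DMS1} off $\InterphaseMix$, stay in a fixed bounded subset of $\Cont{0}([0,T];\Leb{\infty}_{w^*}(\R^2))$, and obey the \emph{reserved} bound
\[
\Big|\,\dashint_{\map(Q,t)}\rho(x,t)\dif x-\expected{L}\varrho\,\Big|\ \le\ \tfrac12\,\Efunc(\expected{L},t)\,\Qfunc{\map(Q,t)}\qquad\text{for all rectangles }Q=S\times L .
\]
Equipping $X$ with a metric $\mathfrak d$ inducing weak* convergence at each time slice on the fixed bounded set, $\overline{X}^{\mathfrak d}$ is a nonempty complete metric space (it contains the subsolution above, with slack). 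Since $v\mapsto\int_{\map(Q,t)}\rho(x,t)\dif x$ is $\mathfrak d$-continuous, the reserved bound passes to $\mathfrak d$-limits, so \ref{DMS3} in fact holds for \emph{every} element of $\overline{X}^{\mathfrak d}$. Thus only \ref{DMS1}, \ref{DMS2}, and the property of being an \emph{exact} weak solution remain to be produced.

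The heart of the argument is a quantitative stage lemma: for each strict subsolution $(\rho,\velocity,m)\in X$ (gap $\gamma>0$ on $\mathrm{int}\,\InterphaseMix$) and each $\eta>0$ one must build a strict subsolution $(\rho',\velocity',m')\in X$ with $\mathfrak d\big((\rho,m),(\rho',m')\big)<\eta$, a quantitative integral decrease of the gap by an amount depending only on the current gap, and, \emph{simultaneously}, the uniform rectangle control
\[
\Big|\int_{\map(Q,t)}(\rho'-\rho)(x,t)\dif x\Big|\ \le\ \varepsilon_{\mathrm{st}}\,\Efunc(\expected{L},t)\,\Qfunc{\map(Q,t)}\,|\map(Q,t)|\qquad\text{for all }Q=S\times L ,
\]
with summable stage errors $\sum\varepsilon_{\mathrm{st}}\le\tfrac12$. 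As in \cite{Onadmissibility,RIPM}, the perturbation $w=(\rho'-\rho,m'-m)$ would be a finite superposition of localized plane-wave solutions of the linearized relaxed system at a large frequency $N$, of local amplitude $\sim\sqrt{\gamma(x,t)}$, with a smooth time cutoff preserving weak* continuity in $t$; since $\gamma$ vanishes near $\partial\InterphaseMix$ so does $w$, which is exactly the mechanism producing the weight $\Sfunc(1-|\lambda|)\Tfunc(t)$, and an additional cutoff adapted to $\Efunc$ should let one undershoot \emph{any} prescribed (possibly exponentially flat, cf. \eqref{TSexample}) pair $\Sfunc,\Tfunc$ at the price of more plane waves. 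The hard part will be the weak* estimate against the \emph{non-smooth} test functions $\car{\map(Q,t)}$, uniformly in $Q$: away from $\partial\map(Q,t)$ one integrates by parts for a gain $O(N^{-1})$, while near $\partial\map(Q,t)$ one pays only a boundary layer of width $O(N^{-1})$ times the perimeter, and the scale factor $\Qfunc{\map(Q,t)}$ — with the $\alpha$-refinement tailored precisely to the small-rectangle regime where these gains are weakest — is exactly the bookkeeping that makes both contributions absorbable once $N=N(\rho,\eta,\Efunc)$ is large enough; reconciling this uniform-in-$Q$, boundary-adapted control with the gap decrease is where the real work lies. Everything else is the now-standard machinery of \cite{Onadmissibility,RIPM,Mixing}.

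To finish I would run the usual Baire category argument: the gap functional $v\mapsto\int_{\InterphaseMix}\big(\varrho^2-\rho^2(x,t)\big)\dif x\dif t$ is nonnegative, bounded, and weak* upper semicontinuous on $\overline{X}^{\mathfrak d}$, hence of Baire class one, so its continuity points form a residual set on which, by the stage lemma, the gap vanishes; thus the set $X_\infty$ of exact solutions in $\overline{X}^{\mathfrak d}$ is residual, in particular nonempty and — since the stage lemma shows $\overline{X}^{\mathfrak d}$ has no isolated points — uncountable. Any $(\rho,\velocity,m)\in X_\infty$ has $(\rho,\rho\velocity)\in\mathcal K$ a.e., hence is a genuine weak solution of IPM with datum $\rho_0$; \ref{DMS1} holds by construction, \ref{DMS3} holds throughout $\overline{X}^{\mathfrak d}$, and $\rho\in\Cont{0}([0,T];\Leb{\infty}_{w^*}(\R^2))$ is inherited from completeness. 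For \ref{DMS2} at each time slice I would add to the generic conditions, for every ball $B$ with rational centre and radius and every rational interval $J$ with $\overline B\subset\InterphaseMix(\tau)$ on $J$, the open dense set $\{v:\int_B(\varrho\mp\rho(\cdot,\tau))\dif x>0\ \ \forall\tau\in J\}$ (dense because strict subsolutions, having $|\rho|<\varrho$ on $\mathrm{int}\,\InterphaseMix$, lie in it); intersecting these countably many residual sets and using $\varrho\mp\rho\ge0$ to pass from balls to arbitrary nonempty open $\Omega\subset\InterphaseMix(t)$ — as in \cite{Mixing,RIPM,Piecewise} — then gives \ref{DMS2} for all $t$. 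This produces infinitely many degraded mixing solutions for the map $\map$ of degree $(\alpha,\Efunc)$, as claimed in Theorem \ref{thmprincipal}.
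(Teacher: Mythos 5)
Your overall architecture matches the paper's: work in a complete metric space of subsolutions that already encodes the quantitative bound \ref{DMS3} (with the subsolution of \cite{Mixing} as anchor, whose averages are exactly $\expected{L}\varrho$ by $\Jacobian_{\map(t)}=ct$), prove a perturbation property, and extract a residual set of exact solutions by Baire category, then handle \ref{DMS2} by intersecting countably many open dense sets indexed by rational balls. That is precisely the strategy of Section~\ref{sec:HP} specialized in Section~\ref{sec:Proof}.

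There are, however, two substantive issues with how you propose to make the perturbation step work. First, the bookkeeping via a fixed ``reserved'' constant $\tfrac12$ together with ``summable stage errors $\sum\varepsilon_{\mathrm{st}}\le\tfrac12$'' mixes up an iterative scheme with a Baire category argument. In Baire category there is no sequence of stages to sum over: one shows the functional $\mathcal J$ is upper semicontinuous, and then at a continuity point one applies the perturbation lemma once to reach a contradiction; what must be guaranteed is that the perturbed $z_k$ remains in the space of subsolutions. With a hard cap of $\tfrac12$, a subsolution saturating that cap cannot be perturbed and stay in $X$. The paper resolves this by letting the constant $C(z)\in(0,1)$ depend on $z$ (Definition~\ref{spacesubsolutions}) and showing the perturbed $z_k$ satisfies the bound with constant $C(z)+C'(z)<1$ for some $C'(z)<1-C(z)$; closure then gives $\le1$, which is exactly the statement of \ref{DMS3}.

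Second, the mechanism you propose for the ``hard part'' --- a direct boundary-layer estimate for the high-frequency perturbation tested against $\car{\map(Q,t)}$, uniformly over $Q$ --- is not the one in the paper, and as sketched it has a genuine gap: for thin rectangles the perimeter-to-area ratio is unbounded, and while you gesture at the $\alpha$-refinement as the cure, you do not explain why $N^{-1}\,\mathrm{perim}(\map(Q,t))\lesssim\Efunc\,|\map(Q,t)|^\alpha$ can hold uniformly. In fact the paper sidesteps this entirely via a soft argument in Step~3 of Proposition~\ref{perturbationproperty}: for cubes with small image the trivial $L^\infty$ bound plus $\alpha<1/r$ already gives the estimate, and for the remaining (moderately large) cubes one reduces, after restricting to a bounded $Q_0$, to a finite family of cubes and a finite family of times, and then invokes the convergence $z_k\to z$ in $\Cont{0}([0,T];\Leb{\pp}_{\LL})$. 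No quantitative frequency control against indicator functions is needed. Your hard-estimate route might ultimately be made to work, but it would require a careful uniform-in-$Q$ analysis that you have not supplied and that the paper shows is unnecessary.
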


Let us explain in more detail the motivation and consequences of the theorem \ref{thmprincipal}. 
In \cite{O}, F. Otto proposed a relaxation approach of the Muskat-Mixing problem for the flat interface $f_0=0$. Roughly speaking, by neglecting the non-convex constraint $\rho\in\{\rho^-,\rho^+\}$ by its local average in space $\breve{\rho}\in[\rho^-,\rho^+]$, the author obtain an unique  solution
$$\breve{\rho}(x,t)=\left\lbrace\begin{array}{rl}
\pm\varrho, &  \pm x_2\geq 2\vartheta\varrho t, \\[0.1cm] 
\frac{x_2}{2\vartheta t}, &  |x_2|<2\vartheta\varrho t,
\end{array}\right.$$
of a relaxed problem (see also \cite{GO12}).
This can be thought as a solution of the Muskat-Mixing problem at a mesoscopic scale. %Notice that here the mixing zone at each $t\in(0,T]$ can be written as $$\{(s,\bar{f}_0(s)+2\vartheta\varrho\lambda t)\in\R^2\,:(s,\lambda)\in\R\times(-1,1)\}.$$
In \cite{RIPM}, L. Sz\'ekelyhidi Jr. proved the existence of infinitely many weak solutions to the Muskat-Mixing problem for the initial flat interface $f_0=0$ satisfying \ref{DMS1o}. The proof is based on the convex integration method, which reverses Otto's relaxation switching from the ``\textbf{subsolution}'' $\breve{\rho}$ to exact solutions. In \cite{Mixing}, \'A. Castro, D. C\'ordoba and D. Faraco generalized Sz\'ekelyhidi's result for initial interfaces $f_0\in\Hil{5}(\R)$ and they added the property \ref{DMS2o}, where the subsolution is given by the following density function adapted to $\map$ 
\begin{equation}\label{densidadprincipal}
\breve{\rho}(x,t)=\left\lbrace\begin{array}{rl}
\pm\varrho, &  x\in\Omega_{\pm}(t), \\[0.1cm]
\lambda\varrho, &  x=\map(s,\lambda,t)\in\InterphaseMix(t).
\end{array}\right.
\end{equation}
Observe $\breve{\rho}$ can be though as a generalization of Otto's density for these more general initial interfaces.\\ 
If we understand this subsolution as a \textbf{coarse-grained density} for the Muskat-Mixing problem, it predicts a linearly degraded macroscopic behaviour of its solutions. 
Let us give a naive physical intuition about why such phenomenon is natural to be expected.
At molecular level, %(forgetting for a few moments the continuum model), 
since the natural regime of the heaviest fluid is at the bottom, in the stable case the molecules are already 
well-placed, whereas
%(figura \ref{fig:molE}). 
in the unstable case %since the fluid cannot be compressed,
the molecules of the heaviest fluid are forced into break through the molecules of the lightest. Let us simplify the dynamic as a kind of random walk for the flat case to illustrate it (see \cite[sec. 2]{O} for a different approach). We interpret the conservation of mass and volume by setting that two close different molecules may interchange their positions if the heaviest is above the lightest, i.e., if their state is unstable due to gravity. Darcy's law is interpreted by setting that such interchange happens with some probability depending on the viscosities and in terms of the proximity to the rest molecules of the same fluid respectively. In the balanced case $\nu^+=\nu^-$, we set the probability is one half, independently of the relative position. We also set the size of the discretization $r=\triangle x_i=2c\triangle t$, where $c$ may depend on $\vartheta$ and $\varrho$. In spite of the randomness, the expectation on contour lines of molecules $\breve{\rho}_r$ is a deterministic function. In fact, this mean density $\breve{\rho}_r$ is an step function which decreases linearly from $\varrho$ to $-\varrho$  inside a strip (the mixing zone) that grows linearly in time with speed $c$. Returning to the continuum model, the coarse-grained density $\breve{\rho}$ \eqref{densidadprincipal} (for the flat case) is obtained
when $r\downarrow 0$.
\begin{figure}[h!]\centering
	\includegraphics[height=4.3cm]{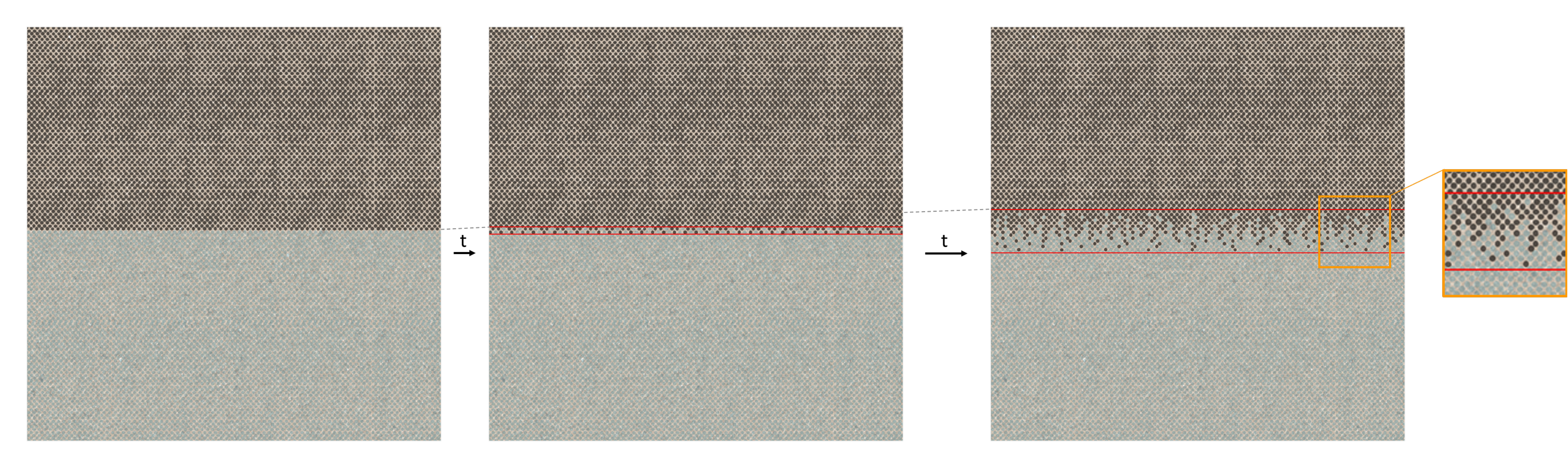}
	\caption{In the fully unstable regime, the molecules of the heaviest fluid are forced into break through the molecules of the lightest. The rate of expansion of the mixing zone and the coarse-grained density are linear.}
	\label{fig:molIzoom}\end{figure} 

\noindent This motivates to look for solutions $\rho$ of the Muskat-Mixing problem with a perfect linearly degraded macroscopic behaviour on contour lines (see rem. \ref{contourlines}). However, an error in the average between such $\rho$ and $\breve{\rho}$ is unavoidable on sufficiently small rectangles due to Lebesgue differentiation theorem. Since this error spreads as the molecules advance into the mixing zone, it must depend on the distance to where the fluids begin to mix too.\newpage
\noindent This is precisely the information recovered by theorem \ref{thmprincipal} (see fig. \ref{fig:dibujo}). Observe that the volume proportion of fluid with density $\rho^{\pm}$ in $\map(Q,t)$ is
\begin{equation}\label{ratefluid}
\frac{|\{x\in\map(Q,t)\,:\,\rho(x,t)=\pm\varrho\}|}{|\map(Q,t)|}=\frac{1}{2}\left(1\pm\frac{1}{\varrho}\dashint_{\map(Q,t)}\rho(x,t)\dif x\right),
\end{equation}
i.e., the average of $\rho$ quantifies the amount of each fluid. From \cite{Mixing} we know the existence of a sequence of mixing (in space-time) solutions $\rho_k$ such that
$\rho_k\overset{*}{\rightharpoonup}\breve{\rho}$. Thus, we would like to obtain solutions which are as close as possible to satisfy
\begin{equation}\label{aprox}
\dashint_{\map(Q,t)}\rho(x,t)\dif x
\approx
\dashint_{\map(Q,t)}\breve{\rho}(x,t)\dif x=\dashint_{L}\lambda\varrho\dif\lambda
%=\frac{\lambda_1+\lambda_2}{2}
=\expected{L}\varrho
\end{equation}
for every rectangle $Q=S\times L\subset\R\times(-1,1)$ at each  $t\in(0,T]$. However, Lebesgue differentiation theorem tells us that
\begin{equation}\label{LDT}
\lim_{\substack{|Q|\rightarrow 0 \\ \varsigma_0=\expected{Q},\,Q\textrm{ regular}}}\dashint_{\map(Q,t)}\rho(x,t)\dif x=\rho(x_0,t)
\end{equation}
for almost every $x_0=\map(\varsigma_0,t)\in\InterphaseMix(t)$ at each $t\in(0,T]$, where $\rho$ jumps unpredictably between $\pm\varrho$ because of \ref{DMS2}.
%Hence, there is always an error close to
%$|\check{\rho}|+|\lambda_0|$
%when $\rho$ is averaged in sufficiently small cubes, which is, in fact, the maximum posible error. 
In other words, if the position is localized, $Q\downarrow\{\varsigma_0\}$, then the average of $\rho$ is undetermined. The opposite side of the coin is given by \ref{DMS3} because it states that we can know exactly the average of $\rho$ on unbounded domains. Schematically, this phenomenon can be interpreted as an ``uncertainty principle'' as follows:
%$$\begin{array}{r|c|c} 
% & Q & \dashint_Q\rho^\sharp\dif\varsigma \\ \hline
%\textrm{Space} & \{\varsigma_0\} & \textrm{infinite} \\ \hline
%\textrm{Average} & \textrm{wild} & \expected{L}
%\end{array}$$
\begin{center}
\begin{tabular}{cc|c|c|c|c|l}
\cline{3-4}
& & $Q$ & $\dashint_{\map(Q,t)}\rho(x,t)\dif x$ \\ \cline{1-4}
\multicolumn{1}{ |l  }{\multirow{2}{*}{Certainty} } &
\multicolumn{1}{ |l| }{Position} & $\{\varsigma_0\}$ & unpredictably     \\ \cline{2-4}
\multicolumn{1}{ |c  }{}                        &
\multicolumn{1}{ |l| }{Average} & unbounded & $\expected{L}\varrho$    \\ \cline{1-4}
\end{tabular}
\end{center}
%because, for $0<\delta<1$,
%$$\left|\dashint_{-R}^{R}\dashint_{\lambda_0-\frac{1}{R^\delta}}^{\lambda_0+\frac{1}{R^\delta}}\rho^\sharp(s,\lambda,t)\dif\lambda\dif s-\lambda_0\right|\leq\frac{\Efunc(\lambda_0,t)}{4R^{1-\delta}}$$
%for every $\lambda_0\in(-1,1)$ at each $t\in(0,T]$. 
As we have already commented, the first row is nothing but \ref{DMS2} in combination with \eqref{LDT}. The second row is due to \ref{DMS3} because such degraded mixing solutions satisfy
$$\lim_{\substack{L\downarrow L_0 \\ |S|\cdot|L|\rightarrow\infty}}\dashint_{\map(Q,t)}\rho(x,t)\dif x=\expected{L_0}\varrho$$
for every interval $L_0\subset(-1,1)$ at each $t\in(0,T]$. Consequently, the volume proportion of fluid with density $\rho^{\pm}$ in the strip $\map(\R,L_0,t)$ is exactly
$\frac{1}{2}(1\pm\expected{L_0})$.
Furthermore, theorem \ref{thmprincipal} not only quantifies these extremal situations, $Q\downarrow\{\varsigma_0\}$ and $Q\uparrow$ unbounded, but also the intermediate cases. More precisely, since $\varrho(1+|\expected{L}|)$ is the maximum possible error in \eqref{aprox} due to \eqref{LDT}, for every small $0<\varepsilon<\varrho$, such degraded mixing solutions improve the knowledge of \eqref{ratefluid} around each point $x_0=\map(\varsigma_0,t)\in\InterphaseMix(t)$ at each $t\in(0,T]$
\begin{equation}\label{knowledge}
\left|\dashint_{\map(Q,t)}\rho(x,t)\dif x-\expected{L}\varrho\right|\leq\varepsilon
\end{equation}
%improve the trivial bound
%$$\left|\dashint_{\map(Q,t)}\rho(x,t)\dif x-\expected{L}\varrho\right|\leq\check{\rho}(1+|\expected{L}|)$$
for every rectangle $Q=S\times L\subset\R\times(-1,1)$ containing $\varsigma_0$ in the regime
$$\Qfunc{\map(Q,t)}\leq\varepsilon\Efunc(\expected{L},t)^{-1}.$$
Observe $\alpha=1$ is excluded.\newpage
\noindent Moreover, for every small $\epsilon,\varepsilon>0$ there is $\delta>0$ such that, for all $Q=S\times L\subset\R\times(-1,1)$ with $(1-|\expected{L}|)\wedge|t|\leq\delta$ and $|\map(Q,t)|^{1-\alpha}\geq\epsilon$, then \eqref{knowledge} holds. That is, the uncertainty depends on the distance to the (space-time) boundary of the mixing zone.
%It is clear that for $\varepsilon\approx 0$ it must be  $t\approx0$ or $|\expected{L}|\approx 1$. 
In other words, the linearly degraded macroscopic behaviour is almost perfect close to where the fluids begin to mix.
\begin{figure}[h!]\centering
	\includegraphics[height=6cm]{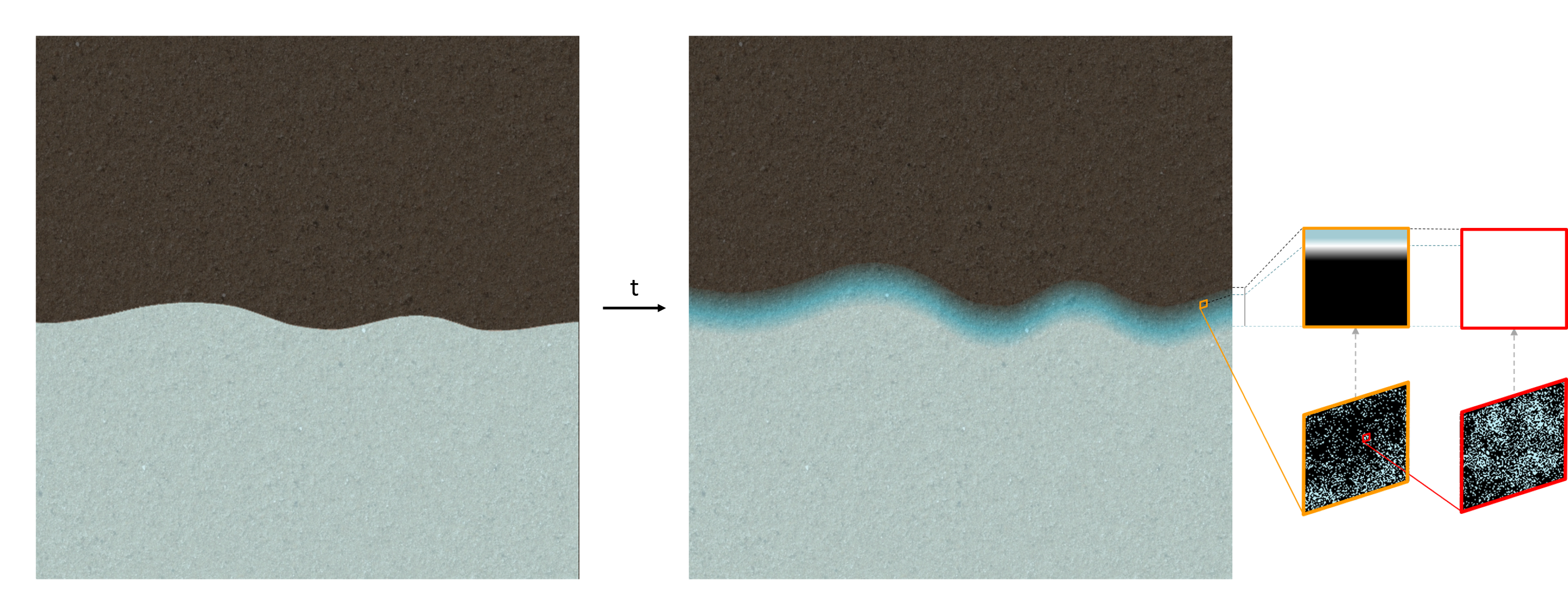}
	\caption{At a macroscopic scale, such degraded mixing solutions behave almost like the coarse-grained density (subsolution). The error (white) depends on the sizes of the rectangles and the distance to the (space-time) boundary of the mixing zone. Even for very small rectangles (orange), the volume proportion of the fluids is almost linearly distributed. In particular, the mixture is perfect on every contour strip. At a microscopic scale, the fluids are indistinguishable. Thus, for sufficiently small rectangles (red), the mass transport is unpredictably.}
	\label{fig:dibujo}\end{figure} 

Furthermore, the mass is not the only quantity that can be recovered from the subsolution. More precisely, our argument shows that linear quantities are almost preserved, e.g. that $\breve{\velocity}=\vartheta\BSO(-\partial_{1}\breve{\rho})$ (see \cite[sec. 4.1.2]{Mixing}) can be understood as the coarse-grained velocity. Moreover, not only linear quantities are inherited but also those weak*-continuous in the space of solutions of the stationary equations \eqref{IPM:2}\eqref{IPM:3} (see sec. \ref{sec:CI}). For instance, the ``power balance''
\begin{equation}\label{Power}
\PP(\rho,\velocity)=\velocity\cdot(\velocity+\vartheta(0,\rho))
=|\velocity|^2+\vartheta\rho\velocity_2,
\end{equation}
which can be interpreted as the balance between the density of energy per unit time consumed by the friction and the density of work per unit time done by the gravity. We have the following theorem.
\begin{thm}\label{thmprincipal2} In the context of theorem \ref{thmprincipal}, there exists infinitely many degraded mixing solutions such that, at each $t\in(0,T]$, it satisfies:
\begin{enumerate}[(d)]
\item\label{DMS4} For every non-empty bounded rectangle $Q=S\times L\subset\R\times(-1,1)$,
$$\left|\dashint_{\map(Q,t)}[\velocity-\breve{\velocity}](x,t)\dif x\right|\leq\Efunc(\expected{L},t)\Qfunc{\map(Q,t)}$$
and
$$\left|\dashint_{\map(Q,t)}[\PP(\rho,\velocity)-\PP(\breve{\rho},\breve{\velocity})](x,t)\dif x\right|\leq\Efunc(\expected{L},t)\Qfunc{\map(Q,t)}.$$
\end{enumerate} 
\end{thm}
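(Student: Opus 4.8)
The plan is to run the same convex integration scheme that produces Theorem \ref{thmprincipal}, but to track two additional weak$^*$-continuous quantities along the iteration. The starting point is the subsolution $(\breve{\rho},\breve{\velocity})$ from \eqref{densidadprincipal} together with its associated momentum/relaxed field, exactly as in \cite{Mixing}. At each stage of the iteration one perturbs $(\rho_k,\velocity_k)$ by adding highly oscillatory plane-wave-type increments supported in the mixing zone $\InterphaseMix$, chosen so that the pair converges weakly$^*$ to a genuine solution of IPM while the density constraint \ref{DMS1} and the mixing property \ref{DMS2} are enforced in the limit. The key observation is that both $\velocity$ and the power balance $\PP(\rho,\velocity)=\velocity\cdot(\velocity+\vartheta(0,\rho))$ are \emph{weak$^*$-continuous} on the set of solutions of the stationary relations \eqref{IPM:2}--\eqref{IPM:3}: $\velocity$ is linear, and $\PP$ is a quadratic expression which, restricted to the wave cone / $\Lambda$-set of the differential inclusion, is a null Lagrangian-type quantity (its oscillations average out because the admissible amplitudes lie in the cone along which $\PP$ is affine). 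Hence along the iteration the spatial averages $\dashint_{\map(Q,t)}\velocity_k$ and $\dashint_{\map(Q,t)}\PP(\rho_k,\velocity_k)$ converge to $\dashint_{\map(Q,t)}\breve{\velocity}$ and $\dashint_{\map(Q,t)}\PP(\breve{\rho},\breve{\velocity})$ respectively.

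First I would set up the quantitative h-principle announced in the abstract: formulate an abstract statement saying that for any finite family of weak$^*$-continuous functionals $\Phi_j$ on the relevant phase space, and any prescribed error moduli, the convex integration iteration can be arranged so that the final solution satisfies $|\dashint_{\map(Q,t)}(\Phi_j(\rho,\velocity)-\Phi_j(\breve{\rho},\breve{\velocity}))|\le \Efunc(\expected{L},t)\Qfunc{\map(Q,t)}$ for every rectangle $Q=S\times L$. The mechanism is the same one that delivers \ref{DMS3}: one chooses the oscillation frequencies $\lambda_k\to\infty$ and the localization of the perturbations fast enough relative to $\Sfunc,\Tfunc$ and $\alpha$ so that the error accumulated at step $k$, measured against a rectangle of area $|\map(Q,t)|$, is controlled by a geometrically summable fraction of $\Efunc(\expected{L},t)\Qfunc{\map(Q,t)}$, using the explicit form $\Qfunc{A}=\frac{1\wedge|A|^\alpha}{|A|}$ to absorb the worst case of small rectangles near the boundary. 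Then I would verify that $\velocity$ and $\PP$ satisfy the hypotheses of this abstract statement — linearity is immediate for $\velocity$; for $\PP$ one checks, as in \cite[sec. 4.1.2]{Mixing} and the commutation-type lemmas of \cite{Onadmissibility,RIPM}, that the increment of $\PP$ produced by a single admissible oscillation has vanishing mean plus a term of lower order in the frequency, which is exactly the statement that $\PP$ is weak$^*$-continuous along the inclusion.

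The main obstacle I anticipate is the second estimate, for $\PP$, because it is quadratic rather than linear: when two successive perturbations interact, the cross terms in $|\velocity_{k+1}|^2$ and in $\rho_{k+1}\velocity_{k+1,2}$ need not vanish automatically, so one must use the precise algebraic structure of the IPM wave cone to show that the admissible amplitudes are chosen along directions where $\PP$ is affine (so that the leading quadratic correction telescopes and only a controllable remainder survives). Concretely, the increment $\delta\velocity$ is tangent to the cone on which $\PP$ does not see the oscillation, and the residual $|\delta\velocity|^2$ term must be reabsorbed into the iteration's error budget — this forces a coupling between the frequency growth and the amplitude decay that has to be compatible with also achieving \ref{DMS1}, \ref{DMS2}, \ref{DMS3} simultaneously. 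Once this bookkeeping is arranged, the conclusion for $\velocity$ follows a fortiori (it is the easy, linear case), and the theorem follows by applying the abstract quantitative h-principle to the two functionals at once, the intersection of countably many ``good rectangle'' conditions being handled exactly as in the proof of Theorem \ref{thmprincipal}.
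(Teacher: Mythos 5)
Your high-level strategy matches the paper's: formulate an abstract quantitative h-principle for a finite family of weak$^*$-continuous functionals, then apply it to $\FF(z)=\velocity$ (linear) and $\FF=\PP$ (quadratic), the latter being weak$^*$-continuous on $\Leb{\infty}_{\BSO}$. Where the routes diverge is in the mechanics. The paper does \emph{not} run an explicit iteration with geometrically summable error budgets; instead it builds the estimate directly into the definition of the space of subsolutions $\Sub$ (condition $(\Fspace_0)$ in Definition~\ref{spacesubsolutions}, with a slack constant $C(z)<1$), proves once and for all in the perturbation property (Proposition~\ref{perturbationproperty}, Step~3) that admissible perturbations can be chosen so as to stay in $\Sub$, and then invokes the Baire category argument: the residual set of continuity points of the upper-semicontinuous relaxation-error functional sits inside $\mathcal{J}^{-1}(0)$, while membership in the closure $\SSub$ automatically carries the $(\Fspace_0)$ estimate with constant $\le 1$ because each $\FF$ is weak$^*$-continuous. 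Your concern about cross terms in $|\velocity_{k+1}|^2$ and about reabsorbing a residual $|\delta\velocity|^2$ is therefore somewhat of a red herring in this framework: once one knows $\PP$ is weak$^*$-continuous on $\Leb{\infty}_{\BSO}$ (for which the paper simply cites the div-curl lemma, which is equivalent to your observation that the quadratic form $\PP$ vanishes on the wave cone so is affine along $\Lambda$-directions), the abstract machinery needs no telescoping or amplitude-frequency coupling beyond what already appears in Step~3. In short, your ``null-Lagrangian/affine on the cone'' justification is correct and coincides with the div-curl lemma, but the geometric-sum bookkeeping you propose is replaced in the paper by the Baire closure argument, which is both cleaner and what actually delivers the uniform bound with constant $1$ rather than an accumulated sum.
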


%\textbf{Strategy and some historical remarks.} 
Our proof is essentially based on the version of the h-principle presented by C. De Lellis and  Sz\'ekelyhidi Jr. for the incompressible Euler equations in \cite{Onadmissibility}. The concept of \textbf{h-principle} (homotopy principle) and the \textbf{convex integration} method was developed in Differential Geometry by M. Gromov (\cite{Gromov}) %for that situations in which, roughly speaking, the construction of ``exact'' solutions for some partial differential relation is reduced to homotopy if some ``relaxed'' solution exists. In particular, Gromov developed a method called \textbf{convex integration}, 
as a far-reaching generalization of the ground-breaking work of J. Nash (\cite{Nash}) and N. Kuiper (\cite{Kuiper}) for isometric embeddings. The philosophy of this method consists of adding suitable localized corrections to switch from some ``relaxed solution'' to exact solutions.
%In 1954, J. Nash (\cite{Nash}) proved that every ``short'' embedding from a compact Riemannian manifold into some Euclidean space of codimension greater or equal than 2 can be $C^0$-approximated by $C^1$ ``isometric'' embeddings. In 1955, N. Kuiper (\cite{Kuiper}) generalized the result in codimension greater or equal than 1. Their proof is based on adding suitable localized corrections to switch from ``short'' to ``isometry''. In the eighties, a far-reaching generalization into the world of partial differential relations in differential geometry was performed by M. Gromov (\cite{Gromov}). In this context, Gromov introduced the concept of \textbf{h-principle} (homotopy principle) for that situations in which, roughly speaking, the construction of ``exact'' solutions of some problem in geometry is reduced to homotopy if some ``relaxed'' solution exists. In particular, Gromov proposed a method to attack some partial differential equations called convex integration. In 2003, S. M\"{u}ller and V. \v{S}verak (\cite{MS}) recovered this method with a different notion of Gromov's convex hull adapted to a class of Lipschitz mappings. 
S. M$\ddot{\textrm{u}}$ller and V. $\check{\textrm{S}}$verak (\cite{MS03}) combined this method with Tartar compensated compactness (\cite{Tartar}) to apply it to PDEs and Calculus of Variations (see also \cite{Rigidity,DM}). 
Remarkably, De Lellis and  Sz\'ekelyhidi Jr. (\cite{Eulerinclusion}) discovered in 2009 that the incompressible Euler equations could also be brought to this framework, opening a new way to understand weak solutions in hydrodynamics, which end up in the proof of Onsager's conjecture (\cite{Ise16}).\\
In the context of IPM, C\'ordoba, Faraco and F. Gancedo (\cite{LIPM}) applied this method to prove lack of uniqueness.
In \cite{ActiveScalar} this result was extended to more general active scalar equations, and in \cite{IV15} the regularity of this kind of solutions was improved to $\Cont{\alpha}$. As we have already commented,  Sz\'ekelyhidi Jr. (\cite{RIPM}) proved the existence of infinitely many weak solutions to the Muskat-Mixing problem for the initial flat interface $f_0=0$ satisfying \ref{DMS1o}. In addition, Sz\'ekelyhidi Jr. showed that, for this relaxation, the mixing zone is always contained in a maximal mixing zone (\cite[prop. 4.3]{RIPM}) given by 
\begin{equation}\label{cmax}
c_{\max}=2\vartheta\varrho.
\end{equation}
As suggested in \cite{O} and \cite{RIPM}, 
in spite of the inherent stochasticity of the Muskat-Mixing problem explains the emanation of infinitely many microscopic solutions, there is a way to identify a selection criterion among subsolutions which leads to uniqueness, i.e.,
the physically relevant solutions are those which behave more like the mesoscopic solution.
In \cite{Mixing}, Castro, C\'ordoba and Faraco generalized Sz\'ekelyhidi's result for initial interfaces $f_0\in\Hil{5}(\R)$ and and they added the property \ref{DMS2o}.
In addition, they showed that, in the class of subsolutions given by \eqref{densidadprincipal}, the maximal speed of growth is \eqref{cmax} too.
Recently, C. F\"{o}rster and Sz\'ekelyhidi Jr. (\cite{Piecewise}) have proved the existence of mixing solutions for initial interfaces $f_0\in\Cont{3,\alpha}_*(\R)$. In particular, to attain \eqref{cmax} with the more manageable piecewise constant subsolutions, they constructed them as simple functions approaching $\breve{\rho}$. All this motivates the search of such degraded mixing solutions.\\
\indent This is the starting point of this paper. 
Since we want to control our solutions at each time slice, we follow \cite{Onadmissibility}.
This readily yields continuity in time, but with a careful look also the property (b). 
The third and more relevant aim is to prove the property (c). To this end, a more precise look at the h-principle of De Lellis and Sz\'ekelyhidi Jr. is required. 
Thus, the observation here is that, by defining more carefully the space of subsolutions, we can show that a generic solution will almost inherit the properties of the subsolution, which are described by weak*-continuous functionals.
For the Muskat-Mixing problem, these are the degraded mixing solutions. In fact, we have chosen to present a general theorem (thm. \ref{HP}) for a class of evolution equations, in the spirit of \cite{Onadmissibility,RIPM}, instead of an adapted version to the Muskat-Mixing problem. As an illustration, we shall discuss the case of vortex sheet for the incompressible Euler equations.\\

\indent The paper is organized as follows. In section \ref{sec:CI} we present the convex integration scheme and in section \ref{sec:HP} we prove the corresponding quantitative h-principle. This allows to prove theorems \ref{thmprincipal} and \ref{thmprincipal2} as particular cases in section \ref{sec:Proof}. In addition, we show an application to the vortex sheet problem in section \ref{sec:Other}.

\section{Convex integration scheme for a class of evolution equations}\label{sec:CI}

Before embarking in the more general scheme, let us recall the  case of the Muskat-Mixing problem to motivate it (\cite{LIPM,RIPM}). %\\
%\indent \textbf{Relaxation for Muskat}. 
By applying the rotational operator on \eqref{IPM:3}, $p$ is eliminated 
\begin{align}
%\partial_t\rho+\nabla\cdot(\rho\velocity) & = 0  \label{IPM:4}\\
%\Div\velocity & = 0 \label{IPM:5}\\
\Curl\velocity & = -\vartheta\partial_{1}\rho. \label{IPM:6}
\end{align}
By decomposing $\rho^{\pm}=\expected{\rho}\pm\varrho$ where
$\expected{\rho}=\tfrac{\rho^++\rho^-}{2}$ and 
$\varrho=\tfrac{\rho^+-\rho^-}{2}$
are the mean value and the deviation of the density respectively, it is clear we may assume  $\expected{\rho}=0$. 
More precisely, the mean density is absorbed by the pressure in \eqref{IPM:3}.
Thus, $\varrho$ is the significant term and all the results follow by adding $\expected{\rho}$ to $\rho$.\\ 
Now we normalize the problem as usual. Notice that if $(\rho,\velocity)$ is a degraded mixing solution with deviation $1$, parameter $1$ and initial density $\rho_0=\car{\Omega_{+}(0)}-\car{\Omega_{-}(0)}$, then the pair
$$\tilde{\rho}(x,t)=\varrho\rho(x,\vartheta\varrho t),
\quad\tilde{\velocity}(x,t)=\vartheta\varrho\velocity(x,\vartheta\varrho t),$$
is a degraded mixing solution with deviation $\varrho$, parameter $\vartheta$ and initial density \eqref{initialdensity}. 
Thus, from now on we may assume $\varrho=\vartheta=1$.\\ Following \cite{RIPM}, let us make the change of variables in $\R^2$ given by $\Velocity=2\velocity+(0,\rho)$
and let us introduce a new variable $\Aux$ to relax the non-linearity in IPM. Thus, our set of variables will be $z=(\rho,\Velocity,\Aux)\in\R\times\R^2\times\R^2\simeq\R^5$.
The expression of the stationary equations of IPM \eqref{IPM:2}\eqref{IPM:6}, the \textbf{BS} (Biot-Savart) system, in these variables is 
\begin{equation}\label{IPMR:2}
\begin{array}{r}
\Div(\Velocity-(0,\rho))=0\\
\Curl(\Velocity+(0,\rho))=0
\end{array}\quad\textrm{in }\Dis(\R^2)^*.
\end{equation}
We denote by $\Leb{\infty}_{\BSO}(\R^2)$ the closed linear subspace of $\Leb{\infty}_{w^*}(\R^2)$ consisting of functions $z=(\rho,\Velocity,\Aux)\in\Leb{\infty}(\R^2)$ satisfying \eqref{IPMR:2}. 
Therefore, by setting the constraint as usual
$$\Kconstrain=\left\{(\rho,\velocity,\aux)\in\R^5\,:\,\Aux=\tfrac{1}{2}(\rho\Velocity-(0,1)),\,|\rho|=1\right\},$$
the Muskat-Mixing problem attempts to find a bounded (with respect to the $\Leb{\infty}$-norm) and continuous curve in $\Leb{\infty}_{\BSO}(\R^2;\Kconstrain)$
$$z\in\Cont{0}([0,T];\Leb{\infty}_{\BSO}(\R^2;\Kconstrain)),$$
satisfying the Cauchy problem
\begin{equation}\label{IPMR:1}
\begin{array}{rl}
\partial_t \rho+\Div\Aux=0 & \textrm{in }\Dis(\R^2\times(0,T))^*,\\
z|_{t=0}=z_0 & \textrm{in }\Leb{\infty}_{\BSO}(\R^2;\Kconstrain),\\
\end{array}
\end{equation}
with $\rho_0=\car{\Omega_{+}(0)}-\car{\Omega_{-}(0)}$ and $\velocity_0=\BSO(-\partial_{1}\rho_0)$ (see \cite[(4.13)]{Mixing}).
In other words, the Muskat-Mixing problem can be written as a \textbf{differential inclusion}. In \cite{RIPM} it is observed that it is also convenient to consider some compact subsets of $\Kconstrain$. They are
$$\Kconstrain_M=\{(\rho,\Velocity,\Aux)\in \Kconstrain\,:\,|\Velocity|\leq M\}
\Subset\Kconstrain,\quad M>1.$$
The first step in the relaxation has been to replace the non-linearity with a new variable. Since this is too imprecise to capture the problem, the second step consists of restricting the variables to a bigger set $\tilde{\Kconstrain}\supset\Kconstrain$ for which the differential inclusion is still solvable and from which $\Kconstrain$ is ``reachable''.
In \cite{RIPM}, a suitable relaxation for the Muskat-Mixing problem is calculated. This is the $\Lambda$-convex hull (\cite[def. 4.3]{Rigidity}) of $\Kconstrain$ (and $\Kconstrain_M$). A point $z=(\rho,\velocity,\aux)$ belongs to $\Kconstrain^\Lambda$ if and only if it satisfies the inequalities
\begin{align}
|\rho|&\leq 1, \label{Khull:1}\\
\left|\aux-\tfrac{1}{2}\rho\Velocity\right|&\leq\tfrac{1}{2}(1-\rho^2), \label{Khull:2}
\end{align}
and it belongs to $\Kconstrain_M^\Lambda$ if and only if it satisfies
\eqref{Khull:1}\eqref{Khull:2} and
\begin{align}
|\Velocity|^2&\leq M^2-(1-\rho^2), \label{Khull:3}\\
\left|\aux-\tfrac{1}{2}\Velocity\right|&\leq\tfrac{M}{2}(1-\rho), \label{Khull:4}\\
\left|\aux+\tfrac{1}{2}\Velocity\right|&\leq\tfrac{M}{2}(1+\rho). \label{Khull:5}
\end{align}

\subsection{Tartar framework for evolution equations}

The Tartar framework is by now a well known approach to tackle non-linear equations arising in hydrodynamics in which the constitutive relations are interpreted as a differential inclusion.  We recall the definitions. As for the Muskat-Mixing problem, it seems to us convenient to distinguish between stationary equations and conservation laws. 
\begin{defi} Let $1<p_1,\ldots,p_N\leq\infty$ be H\"{o}lder exponents, $\Domain\subset\R^d$ a non-empty open domain and $\LL=(S_i)$ a $d$-tuple of $m_1\times N$ matrices. We denote by $\pp=(p_j)$ and $\Leb{\pp}_{\LL}(\Domain)$ the closed linear subspace of %the direct sum of the corresponding Lebesgue spaces endowed with the weak*-topology
$$\Leb{\pp}_{w^*}(\Domain)=\bigotimes_{j=1}^N\Leb{p_j}_{w^*}(\Domain),$$ 
consisting of functions $z=(z_j)\in\Leb{\pp}(\Domain)$ satisfying the system of $m_1$ linear (stationary) equations
\begin{equation}\label{Linearproblem}
\LL\cdot\nabla z=\sum_{i=1}^dS_i\partial_i z=0
\quad\textrm{in }\Dis(\Domain)^*.
\end{equation}
If $\LL$ is trivial, simply $\Leb{\pp}_{\LL}(\Domain)=\Leb{\pp}_{w^*}(\Domain)$. If $p_1=\ldots=p_N=p$, we simply denote $\pp=p$. 
\end{defi}
%Furthermore, for every closed set $C\subset\R^N$, we denote by
%$\Leb{\pp}_{\LL}(\Domain;C)$
%the subspace of $\Leb{\pp}_{\LL}(\Domain)$ of $C$-valued functions. We say that $C$ is $\Leb{\pp}_{\LL}$-\textbf{stable} if $\Leb{\pp}_{\LL}(\Domain;C)$ is closed.
Next we introduce the Cauchy problem. For some fixed closed (constraint) $\Kconstrain\subset\R^N$,
%When $p_1=\cdots=p_N=p$, we simply denote $\pp=p$.\\
given an initial data $z_0\in\Leb{\pp}_{\LL}(\Domain;\Kconstrain)$, our aim is to find a bounded (with respect to the $L^\pp$-norm) and continuous curve in $\Leb{\pp}_{\LL}(\Domain;\Kconstrain)$
\begin{equation}\label{curvespace}
z\in\Cont{0}([0,T];\Leb{\pp}_{\LL}(\Domain;\Kconstrain)),
\end{equation}
satisfying the Cauchy problem% of $1\leq m_2\leq N$ linear evolution equations
\begin{equation}\label{Cauchyproblem}
\begin{array}{rl}
C_0\partial_t z+\CC\cdot\nabla_x z=0 & \textrm{in }\Dis(\Domain\times(0,T))^*,\\
z|_{t=0}=z_0 & \textrm{in }\Leb{\pp}_{\LL}(\Domain;\Kconstrain),\\
\end{array}
\end{equation}
where $C_0=[I_{m_2}|0]\in \R^{m_2\times N}$ and $\CC=(C_i)$ is a $d$-tuple of $m_2\times N$ matrices ($m_2\leq N$).
We note that (the $\Leb{p}$-version of) \cite[lemma 8]{Onadmissibility} suggests that \eqref{curvespace} is an appropriate space for the problem.\\

In section \ref{sec:Hypothesis} we introduce several hypothesis, in the spirit of \cite{Onadmissibility,RIPM}, under which a quantitative h-principle shall be proved in section \ref{sec:HP}. Before setting them, let us give a brief explanation.\\
\indent \textbf{Plane-wave analysis.} For smooth solutions, the above system of $M=m_1+m_2$ linear equations can be written compactly in the original Tartar framework as 
\begin{equation}\label{LCproblem}
\TT\cdot\nabla z=0
\quad\textrm{in }\Domain\times(0,T),
\end{equation}
where $\TT=(T_i)$ is a $(d+1)$-tuple of $M\times N$ matrices by setting $T_i^{\mathrm{T}}=(S_i^{\mathrm{T}}|C_i^{\mathrm{T}})$
%$$T_i=\left(\begin{array}{c}
%S_i \\ \hline
%C_i
%\end{array}\right)$$
and $S_0=0$. Notice %, since $\TT\cdot\nabla z=\Div(\TT z)$, 
\eqref{LCproblem} can be also written in divergence-free form as
\begin{equation}\label{LCproblem1}
\Div(\TT z)=0
\quad\textrm{in }\Domain\times(0,T),
\end{equation}
where $\TT z=(T_0z|\cdots|T_dz)\in\R^{M\times(d+1)}$.  %The convex integration method is based on adding localized smooth solutions $\tilde{z}_k$ of \eqref{LCproblem} to a given ``subsolution'' $\breve{z}$ in the hope to obtain an exact solution in the limit. For such $\tilde{z}$'s, \eqref{LCproblem} (and \eqref{LCproblem1}) is expressed in the Fourier side as
%\begin{equation}\label{Fourierside}
%(\TT\cdot\xi)\hat{\tilde{z}}(\xi)=(\TT\hat{\tilde{z}}(\xi))\xi=0,\quad\xi\in\R^{d+1},
%\end{equation}
%that is, $\hat{\tilde{z}}(\xi)\in\Ker(\TT\cdot\xi)$, 
The set of directions of one-dimensional oscillatory solutions of \eqref{LCproblem} is well-known as its \textbf{wave cone} (\cite{Tartar}) 
\begin{equation}\label{TW}
\Lambda_{\TT}=\bigcup_{\xi\in\Sp^{d-1}\times\R}\Ker(\TT\cdot\xi)=\left\{\bar{z}\in\R^N\,:\,\exists\xi\in\Sp^{d-1}\times\R\,\ni\,(\TT\bar{z})\xi=0\right\}
%\subset\Lambda_{\LL}
\end{equation}
where
$\TT\cdot\xi=\sum_{i=0}^d\xi_i T_i$.
%In fact, the idea is to consider \textbf{localized plane-wave solutions} of \eqref{LCproblem}, that is, highly oscillatory perturbations
%\begin{equation}\label{PWsolutions}
%\tilde{z}_k(y)\approx\bar{z}h(k\xi\cdot y)\psi(y)
%\end{equation}
%for $\bar{z}\in\Lambda_{\TT}$, $\xi\in\Sp^{d-1}\times\R$, some profile $h\in\Cont{1}(\T;[-1,1])$ and some cut-off function $\psi\in\Cont{\infty}_c(\R^{d+1})$. As observed in \cite{Eulerinclusion}, the requirement of the localization introduces necessarily an error in the range of the wave. However, sometimes this error can be made arbitrarily small. To this end, several techniques might be tried. For instance,  potential theory on \eqref{LCproblem1} (\cite{Eulerinclusion,Antidivergence}) or Fourier analysis on \eqref{Fourierside} (\cite{ActiveScalar}). 
We note that we are excluding the frequencies $\xi=(0,\xi_0)$ in \eqref{TW}. As will be discussed later, this is due to lemma \ref{lemmaYM}. \\
%Consider
%\function{\TT}{\R^N}{\mathscr{S}\subset\R^{M\times(d+1)}}{z}{\TT z=:Z}
%where $\mathscr{S}=\TT(\R^N)$. [Me preguntaba bajo qu\'e condiciones hay un potencial. En tal caso, (H2) se reducir\'ia a $\Lambda_{\TT}\neq\{0\}$, la cual es condici\'on necesaria.]\\
\indent \textbf{The relaxation.} As for the Muskat-Mixing problem, instead of focusing on the difficult problem (\ref{Cauchyproblem}), we consider also solutions of (\ref{Cauchyproblem}) for a suitable bigger set $\tilde{\Kconstrain}\supset\Kconstrain$ (briefly  (\ref{Cauchyproblem},$\tilde{\Kconstrain}$)) for which $\Leb{\pp}_{\LL}(\Domain;\tilde{\Kconstrain})$ is (weak*) closed, with the hope of going back to $\Kconstrain$ by adding localized one-dimensional oscillatory solutions of \eqref{LCproblem}. 
Thus, we say that $z\in\Cont{0}([0,T];\Leb{\pp}_{\LL}(\Domain;\tilde{\Kconstrain}))$ is a $\tilde{\Kconstrain}$-\textbf{subsolution} if it satisfies (\ref{Cauchyproblem},$\tilde{\Kconstrain}$). Obviously, $z$ is an \textbf{exact solution} if and only if $\tilde{\Kconstrain}=\Kconstrain$. 
The highly oscillatory behaviour of exact solutions is determined by the compatibility of the set $\Kconstrain$ with the cone $\Lambda_{\TT}$,
%In the literature, an admissibility criteria for the relaxation is to consider weak*-limits of exact solutions (\cite{Tartar,DM,Rigidity}).
%\begin{defi} A (closed) set $C\subset\R^N$ is  $\Leb{\pp}_{\LL}$-\textbf{stable} if $\Leb{\pp}_{\LL}(\Domain;C)$ is closed.
%for every sequence $(z_k)\subset\Leb{\pp}_{\LL}(\Domain)$ with
%$$z_k(x)\in S\quad\almost x\in\Domain,$$
%converging to some $z$ in $\Leb{\pp}_{\LL}(\Domain)$, then also
%$$z(x)\in S\quad\almost x\in\Domain.$$
%\end{defi}
%By virtue of theorem 8 in \cite{Tartar},
%However, since we are considering the extended system $\eqref{LCproblem}$, a more appropriate choice is
which is expressed in terms of the
$\Lambda_{\TT}$-convex hull of $\Kconstrain$,
$\Kconstrain^{\Lambda_{\TT}}$ (\cite[def. 4.3]{Rigidity}).  
%A state $z$ does not belong to $\Kconstrain^{\Lambda_{\TT}}$ if there is a $\Lambda_{\TT}$-convex function $f$ so that $f(z)>\sup_{w\in\Kconstrain}f(w)$.
%Roughly speaking, $(\Kconstrain^{\Lambda_{\TT}})^\circ$ can be also thought as the smallest open set from which  $\Kconstrain$ is reachable by adding localized plane-wave solutions of \eqref{LCproblem} (\cite[sec. 2]{Eulerinclusion})  
Thus, a natural choice of $\tilde{\Kconstrain}$ is $\Kconstrain^{\Lambda_{\TT}}$.
However, sometimes it is enough to consider a smaller set (\cite{LIPM,ActiveScalar}). For a further explanation see \cite[sec. 4]{hprincipleFD}.\\
\indent \textbf{Convergence strategy.} %As for the incompressible Euler equations in \cite{Onadmissibility}, we shall consider a non-negative functional $\mathcal{J}$ to measure the relaxation on $\Cont{0}([0,T];\Leb{\pp}_{\LL}(\Domain;\tilde{\Kconstrain}))$. This shall be defined in section \ref{sec:HP} from some distance function $\Dfunc$ which characterizes the constraint $\Kconstrain$ on $\tilde{\Kconstrain}$. Hence, inspired by the original convex integration method, the first idea is to perturb the subsolution $\breve{z}$ via \eqref{PWsolutions} to construct a minimizing sequence $\mathcal{J}(\breve{z}+\tilde{z}_k)\rightarrow 0$. Hence, some compactness and some structure on $\mathcal{J}$ are required. With this in mind, a different point of view, based on the Baire category theory, was introduced in \cite{Eulerinclusion}. First, $\breve{z}$ is covered in a suitable subspace $\Sub$ of $\Cont{0}([0,T];\Leb{\pp}_{\LL}(\Domain;\tilde{\Kconstrain}))$, in such a way that its closure $\SSub$ become into a complete metric space. Second, it is natural to ask $\Dfunc$ to be concave, so $\mathcal{J}$ becomes upper-semicontinuous.
%Hence, the set of continuity points of $\mathcal{J}$ on $\SSub$, say $\SSub_{\mathcal{J}}$, is residual. Thus, to conclude it is enough that $\SSub_{\mathcal{J}}\subset\mathcal{J}^{-1}(0)$. In fact, we shall prove the other content, $\mathcal{J}^{-1}((0,\infty))\subset\SSub_{\mathcal{J}}^\complement$. Thus, what we want to show is that %highly oscillatory perturbations $\tilde{z}_k$ 
%\eqref{PWsolutions}
%can be added to every strict subsolution $z$ in such a way that $z_k=z+\tilde{z}_k\rightarrow z$ in $\SSub$ whereas $\mathcal{J}(z_k)\nrightarrow\mathcal{J}(z)$. As have already commented, all this requires several hypothesis.
We follow the strategy based on Baire category inspired in \cite{Rigidity}. Since we want to achieve the inclusion at every time our starting point
is \cite[sec. 3]{Onadmissibility}. To make general the arguments in \cite{Onadmissibility} we need a function which plays the role of distance function $\Dfunc$ which is ``semistrongly concave''.  %in the sense that, while for smooth functions this means that its Hessian is uniformly definite negative, the semistrongly concavity only requires 
%comes from the classical strongly concavity, but with the weaker requirement the Hessian to be non-null and semidefinite positive instead of definite positive. %Next we set the accurate definition.
\begin{defi}\label{semistronglyconcave} %Let $\Omega\subset\R^N$.  
We say that a concave function $\Dfunc\in\Cont{}(\R^N)$ is semistrongly concave if there are continuous functions $\Gfunc\in\Cont{}(\R^N;\R^N)$ and $0\neq\Hfunc\in\Cont{}(\R^N;\R_+)$ being $\Hfunc$ positive homogeneous of degree $\gamma\geq 1$ such that
$$\Dfunc(z+w)\leq \Dfunc(z)+\Gfunc(z)\cdot w-\Hfunc(w),\quad z,w\in\R^N.$$
\end{defi}
\begin{Rem} This concept can be understood as a weakening of the classical strongly concavity, for which we recall $\Hfunc$ must be $\Hfunc(w)=C|w|^2$ for some $C>0$. On the one hand, this notion admits directions where $\Hfunc$ can be zero, e.g. $(v,u)\mapsto e-\tfrac{1}{2}|v|^2$ in \cite{Onadmissibility}. On the other hand, it does not require the Hessian to be uniformly definite negative, e.g.
$v\mapsto e-|v|^\gamma$ for $\gamma>2$. %This could be interesting to controlling the $\Leb{\gamma}$-norm if we were considering $\Kconstrain(x,t)$ as in \cite{Onadmissibility}.\\
\end{Rem}
\indent\textbf{Quantitative h-principle.} At the end, we shall reverse the relaxation. This h-principle can be written schematically in the standard way as
$$\begin{array}{ccccc}
& (\ref{Cauchyproblem},\Kconstrain) & \overset{\textrm{relaxation}}{\longrightarrow} & (\ref{Cauchyproblem},\tilde{\Kconstrain}) & \\
& \rotatebox[origin=c]{-90}{$\displaystyle\dashrightarrow$} & \textrm{h-principle} & \rotatebox[origin=c]{-90}{$\displaystyle\longrightarrow$} & \\
\textrm{solution} & z & \underset{\substack{\textrm{convex} \\ \textrm{integration}}}{\longleftarrow} & \breve{z} & \tilde{\Kconstrain}\textrm{-subsolution}
\end{array}$$
Since we want to select those solutions which best emphasize the relation between the relaxation and the convex integration, we introduce a family of (weak*) continuous functionals to test the h-principle diagram, in other words, to help us restrict the space of $\tilde{\Kconstrain}$-subsolutions. Let us explain it in more detail. Notice the property \ref{DMS3} for the Muskat-Mixing problem requires to bound
\begin{equation}\label{Fgexplanation}
\int_{\map(Q,t)}[\FF(z)-\FF(\breve{z})](x,t)g(x,t)\dif x
\end{equation}
where $\FF(z)=\rho$ for $z=(\rho,\Velocity,\Aux)$ and $g=1$. For the general case \eqref{Cauchyproblem}, $\FF$ can be chosen as any affine transformations or, more generally, as any functional $\FF:\Leb{\pp}_{\LL}(\Domain;\tilde{\Kconstrain})\rightarrow\Lebl{p}(\Domain)$ (weak*) continuous on bounded subsets of $\Domain$ for some $p\in(1,\infty]$ (e.g. $\FF(z)=\velocity$ and $\FF=\PP$ for the property \ref{DMS4} in thm. \ref{thmprincipal2}). The $\Leb{p}$-duality suggests to consider weights $g\in\Cont{}((0,T];\Lebl{q}(\Domain))$ with $q\in(p^*,\infty]$ ($\frac{1}{p}+\frac{1}{p^*}=1$). We note that we are excluding $q=p^*$. The reason is to leave room for another H\"{o}lder exponent $r$ in order to prove a suitable perturbation property as in \cite[prop. 3]{Onadmissibility}. Although for the Muskat-Mixing problem we only need $g=1$, we know that other problems shall require this generality.
For the map, we need first an open set $\DomainP$ in the space-time domain $\Domain\times(0,T]$ with\begin{align*}
\DomainP(t)&=\{x\in\Domain\,:\, (x,t)\in \DomainP\}\neq\emptyset,\quad t\in(0,T],
%\tilde{U}(t)&=\{z\in\R^N\,:\, (z,t)\in \tilde{U}\},\\
%U(t)&=\{z\in\R^N\,:\, (z,t)\in U\},
\end{align*}
which plays the roll of $\InterphaseMix(t)$ for the Muskat-Mixing problem. In other words, this will be the domain where the $\tilde{\Kconstrain}$-subsolution fails to be exact but being ``perturbable''. Then, we consider an auxiliary open set $\DomainPy$ in $\R^d\times(0,T]$ and some change of variables $\mapy:\DomainPy\rightarrow\Domain$ with
$\mapy\in\Cont{}((0,T];\textrm{Diff}^{1}(\DomainPy(t);\DomainP(t)))$ which play the roll of $\R\times(-1,1)\times(0,T]$ and the map $\map$ respectively for the Muskat-Mixing problem. Let us set the definition of this family.
\begin{defi} We define $\Fspace$ as the family consisting of triples  $(\FF,g,\mapy)$ satisfying: 
\begin{itemize}
\item[($\FF,g$)] There is a triple $(p,q,r)\in[1,\infty]^3$ with $r\neq\infty$ and $\frac{1}{p}+\frac{1}{q}+\frac{1}{r}=1$ so that $\FF:\Leb{\pp}_{\LL}(\Domain;\tilde{\Kconstrain})\rightarrow\Lebl{p}(\Domain)$, $g:\Domain\times(0,T]\rightarrow\R$ and, for every bounded open $\Omega\subset\Domain$,
$$\FF\in\Cont{}(\Leb{\pp}_{\LL}(\Omega;\tilde{\Kconstrain});\Leb{p}_{w^*}(\Omega)),\quad g\in\Cont{}((0,T];\Leb{q}(\Omega)).$$
\item[($\mapy$)] There is an open set $\DomainPy$  in the space-time domain $\R^d\times(0,T]$ so that  $\mapy:\DomainPy\rightarrow\Domain$ with
$$\mapy\in\Cont{}((0,T];\textrm{Diff}^{1}(\DomainPy(t);\DomainP(t))).$$
\end{itemize}
\end{defi}
\begin{Rem}
Notice that $\Fspace$ is non-empty since we can always consider trivially $\DomainPy=\DomainP$ and $\mapy(t)=\mathrm{id}_{\DomainP(t)}$. 
\end{Rem}

\subsection{Hypothesis}\label{sec:Hypothesis}

From now on we assume that there are a closed set $\tilde{\Kconstrain}\supset\Kconstrain$ for which $\Leb{\pp}_{\LL}(\Domain;\tilde{\Kconstrain})$ is (weak*) closed and some open set $U\subset\tilde{\Kconstrain}\setminus\Kconstrain$  such that the following three hypothesis holds (\cite{Onadmissibility,RIPM}).\\

\textbf{(H1) The wave cone}. There are a cone $\Lambda\subset\R^N$ and a profile $0\neq h\in\Cont{1}(\T;[-1,1])$ with $\int h=0$ such that the following holds. For all $\bar{z}\in\Lambda$ and $\psi\in\Cont{\infty}_c(\R^{d+1})$ there exists $\xi\in\Sp^{d-1}\times\R$ so that there are localized smooth solutions of \eqref{LCproblem} of the form
$$\tilde{z}_k(y)=\bar{z}h(k\xi\cdot y)\psi(y)+\mathcal{O}(k^{-1})$$
where $\mathcal{O}$ only depends on $|\bar{z}|$, $|\xi|$ 
%, $\norma{h}{\Cont{1}(\T)}$ 
and $\{|D^\alpha\psi(y)|\,:\,1\leq|\alpha|\leq n\}$ for some fixed $n$.\newpage

\textbf{(H2) The $\Lambda$-segment}. There is a semistrongly concave function $\Dfunc\in\Cont{}(\R^N)$ and an increasing function $\Phi\in\Cont{}((0,1];(0,\infty))$ such that the following holds.
\begin{itemize}
\item[($\Dfunc$)] The restriction function satisfies $\Dfunc(\tilde{\Kconstrain})\subset[0,1]$ with $\Dfunc|_{\tilde{\Kconstrain}}^{-1}(0)\subset\Kconstrain.$ 
\item[($\Phi$)] For all $z\in U$ there is a large enough $\Lambda$-direction $\bar{z}\in\Lambda$ $$\Hfunc(\bar{z})\geq\Phi(\Dfunc(z))$$ 
where $\Hfunc$ is given in def. \ref{semistronglyconcave}, while the associated $\Lambda$-segment stays in $U$
$$z+[-\bar{z},\bar{z}]\subset U.$$
\end{itemize}

\textbf{(H3) The space of $\tilde{\Kconstrain}$-subsolutions}. There exists a $\tilde{\Kconstrain}$-subsolution $\breve{z}$ which is exact outside $\DomainP$
$$\breve{z}(x,t)\in\Kconstrain\quad\almost x\in\Domain\setminus\DomainP(t),\,\forall t\in(0,T],$$
while it is perturbable inside
$$\breve{z}\in\Cont{}(\DomainP;U).$$
We surround $\breve{z}$ in a topological space $\Sub$ consisting of admissible perturbations of $\breve{z}$. Here, $z$ perturbable means that $z$ is continuous from $\DomainP$ to $U$, and an admissible perturbation of $z$ is $\tilde{z}_k$ such that $z_k=z+\tilde{z}_k$ is also perturbable. Obviously, we impose as usual $z$ and $z_k$ to be $\tilde{\Kconstrain}$-subsolutions. The new feature is that we require $z$ and $z_k$ to be ``close'' to $\breve{z}$ in the sense that they make \eqref{Fgexplanation} small. Next we give the precise definition of $\Sub$. For that we consider a finite family $\Fspace_0\subset\Fspace$ and, for each $(\FF,g,\mapy)\in\Fspace_0$, we fix some $0<\alpha<\frac{1}{r}$, some space-error function $\Sfunc\in\Cont{0}([0,1];[0,1])$ and some time-error function $\Tfunc\in\Cont{0}([0,T];[0,1])$ with $\Sfunc(0)=\Tfunc(0)=0$ and $\Sfunc(s),\Tfunc(t)>0$ for $s,t>0$ (e.g. \eqref{TSexample}). We define $$\Efunc(\varsigma,t)=\Sfunc(1\wedge\dist{\varsigma}{\partial\DomainPy(t)})\Tfunc(t),
\quad\Qfunc{A}=\frac{1\wedge|A|^\alpha}{|A|},$$
where $(\varsigma,t)\in\R^d\times(0,T]$ and $|A|$ denotes the volume of measurable sets $A$ in $\R^d$.
Then, we define $\Sub$ as follows.
\begin{defi}\label{spacesubsolutions} A $\tilde{\Kconstrain}$-subsolution $z$ belongs to $\Sub$ if it satisfies the following conditions.
\begin{enumerate}
\item[($U$)]\label{H3U} It agrees with $\breve{z}$ where the constraint holds
$$z(x,t)=\breve{z}(x,t)\quad\almost x\in\Domain\setminus\DomainP(t),\,\forall t\in[0,T],$$
while it is perturbable where the constraint fails
$$z\in\Cont{}(\DomainP;U).$$
\item[($\Fspace_0$)]\label{CIrestrictive} There is $C(z)\in(0,1)$ such that, for all $(\FF,g,\mapy)\in\mathfrak{F}_0$,
$$\left|\dashint_{\mapy(Q,t)}\left[\FF(z)-\FF(\breve{z})\right](x,t)g(x,t)\dif x\right|\leq C(z)\Efunc(\expected{Q},t)\Qfunc{\mapy(Q,t)}$$
for every non-empty (non-necessarily regular) bounded cube $Q\subset\DomainPy(t)$ and $t\in(0,T]$. Here, %${}^\sharp$ denotes the change of variables given by $\mapy(t)$ and 
$\expected{Q}=\dashint_{Q}x\dif x$ is the center of mass of $Q$.
\end{enumerate}
We say that an element of $\Sub$ is a $(\breve{z},\Fspace_0)$\textbf{-subsolution} (notice $\breve{z}\in\Sub$).
\end{defi}
Once we have defined $\Sub$, we make the following assumption. There is a closed ball $\BB$ of $\Leb{\pp}(\Domain)$ such that, every $\tilde{\Kconstrain}$-subsolution $z$ satisfying ($U$) is a curve inside $\BB$, $z([0,T])\subset\BB$. Indeed, $z([0,T])\subset\tilde{\BB}=\BB\cap\Leb{\pp}_{\LL}(\Domain;\tilde{\Kconstrain})\Subset\BB$. 
%By the Banach-Alaoglu theorem (recall that $\Leb{\pp}(\Domain)\simeq \Leb{\pp^*}(\Domain)^*$ where $\pp^*=(p_j^*)\in[1,\infty)^N$ with $\Leb{\pp^*}(\Domain)$ separable), 
Under this assumption, we define the closure of $\Sub$ in the standard way as in \cite{Onadmissibility}.
Since $\BB$ is compact and metrizable in $\Leb{\pp}_{w^*}(\Domain)$, 
%Notice that $\BB_{\LL}$ is also a compact metric space. 
its metric $d_{\BB}$ induces naturally a metric $d$ on $\mathbf{Y}=\Cont{0}([0,T];(\tilde{\BB},d_{\BB}))$ via
$$d(z,w)=\sup_{t\in[0,T]}d_{\BB}(z(t),w(t)),\quad z,w\in\mathbf{Y}.$$
The space $\mathbf{Y}$ inherits the completeness of $\tilde{\BB}$. 
The topology induced by $d$ on $\mathbf{Y}$ is equivalent to the topology of $\mathbf{Y}$ as subset of $\Cont{0}([0,T];\Leb{\pp}_{\LL}(\Domain;\tilde{\Kconstrain}))$. We define $\SSub$ as the closure of $\Sub$ in $(\mathbf{Y},d)$. In this way, $\SSub$ is a complete metric subspace of $\mathbf{Y}\subset \Cont{0}([0,T];\Leb{\pp}_{\LL}(\Domain;\tilde{\Kconstrain}))$.

%\begin{defi} By hypothesis, there is a closed ball $\BB$ of $\Leb{\pp}(\Domain)$ such that $z(\cdot,t)\in\BB$ for every time $t\in[0,T]$ and every subsolution $z\in\Sub$. Indeed, they belong to $\BB_{\LL}=\BB\cap\Leb{\pp}_{\LL}(\Domain)$.
%By the Banach-Alaoglu theorem (recall that $\Leb{\pp}(\Domain)\simeq \Leb{\pp^*}(\Domain)^*$ where $\pp^*=(p_j^*)\in[1,\infty)^N$ with $\Leb{\pp^*}(\Domain)$ separable), the (with out loss of generality, weak*) closed ball $\BB$ is compact in $\Leb{\pp}_{w^*}(\Domain)$ and the restriction topology is metrizable. Denote by $d_{\BB}$ the metric on $\BB$. Notice that $\BB_{\LL}$ is also a compact metric space. Now, we can define naturally a metric in $\Cont{0}([0,T];(\BB_{\LL},d_{\BB}))$ via
%$$d(z,w)=\sup_{t\in[0,T]}d_{\BB}(z(t),w(t)),\quad z,w\in\Cont{0}([0,T];(\BB_{\LL},d_{\BB})).$$
%The space $\Cont{0}([0,T];(\BB_{\LL},d_{\BB}))$ inherits the completness of $\BB_{\LL}$. The topology induced by $d$ on $\Cont{0}([0,T];(\BB_{\LL},d_{\BB}))$ is equivalent to the topology of $\Cont{0}([0,T];\BB_{\LL})$ as subset of $\Cont{0}([0,T];\Leb{\pp}_{\LL}(\Domain))$. We define $\SSub$ as the closure of $\Sub$ in $\Cont{0}([0,T];(\BB_{\LL},d_{\BB}))$. In this way, $\SSub$, as a (topological) subspace of $\Cont{0}([0,T];\Leb{\pp}_{\LL}(\Domain))$, is a complete metric space inside $\Cont{0}([0,T];(\BB_{\LL},d_{\BB}))$.
%\end{defi}

\begin{Rem} There are some differences between the Muskat-Mixing problem and the incompressible Euler equations considered in \cite{Onadmissibility}. On the one hand, for the Muskat-Mixing problem the domain $\DomainP(t)$ depends on time, whereas in \cite{Onadmissibility} does not. In addition, this convex integration scheme gives more information about the solutions thanks to ($\Fspace_0$). Indeed, for $\Fspace_0$ trivial we would recover the usual scheme.	
On the other hand, in \cite{Onadmissibility} the constraint $\Kconstrain(x,t)$ and consequently the sets $\tilde{\Kconstrain}(x,t)$ and $U(x,t)$ depend on space-time. Although we could combine both cases, we prefer this approach for 
convenience and simplicity.
\end{Rem}

\section{Quantitative h-principle for a class of evolution equations}\label{sec:HP}

In this section we prove a quantitative h-principle assuming \textrm{(H1)-(H3)} from section \ref{sec:Hypothesis}.
First of all let us recall several notions in Baire category theory (\cite{Oxt80}). Given a complete metric space $\SSub$, a set $R\subset\SSub$ is \textbf{residual} if it is countable intersection of open dense sets. By virtue of Baire category theorem, every residual set is dense. A function $\mathcal{J}:\SSub\rightarrow\R$ is \textbf{Baire-1} if it is pointwise limit of continuous functions, e.g. if $\mathcal{J}$ is upper-semicontinuous ($\limsup_{z\rightarrow z_0}\mathcal{J}(z)\leq\mathcal{J}(z_0)\,\forall z_0\in\SSub$).
The set of continuity points of a Baire-1 function $\mathcal{J}$
$$\SSub_{\mathcal{J}}=\left\{z\in\SSub\,:\, \mathcal{J}\textrm{ is continuous at }z\right\}$$
is residual in $\SSub$. 
Now, let $\SSub$ as in \textrm{(H3)}. Since $\DomainP$ is open in $\R^d\times(0,T]$, for every $(x_0,t_0)\in\DomainP$ there are a bounded open domain (with Lipschitz boundary) $x_0\in\Omega\Subset\DomainP(t_0)$ and a time-interval $I=[t_1,t_2]\Subset(0,T]$ with $t_1<t_0\leq t_2$ such that $\Omega\times I\Subset\DomainP$.
We associate to $\Omega\times I$ the \textbf{relaxation-error functional}
\function{\mathcal{J}}{\mathbf{X}}{\R_+}{z}
{\sup_{t\in I}\int_{\Omega}\Dfunc(z(x,t))\dif x}
well defined because $\Leb{\pp}_{\LL}(\Domain;\tilde{\Kconstrain})$ is closed by definition of $\tilde{\Kconstrain}$ and  \textrm{(H2,$\Dfunc$)}. Furthermore, \textrm{(H2,$\Dfunc$)} also implies
\begin{equation}\label{J0}
\mathcal{J}^{-1}(0)\subset\left\{z\in\SSub\,:\,z(x,t)\in\Kconstrain
\quad\almost x\in \Omega,\,\forall t\in I\right\}.
\end{equation}
We omit the proof of the following lemma since it is analogous to \cite[lemma 4]{Onadmissibility}. The crucial information here is that $\Dfunc$ is concave and bounded on $\tilde{\Kconstrain}$. 

\begin{lema}\label{semicontinua} The functional $\mathcal{J}$ is upper-semicontinuous. Therefore, $\SSub_{\mathcal{J}}$ is residual in $\SSub$.
\end{lema}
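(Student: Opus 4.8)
The plan is to reduce the claim to the standard argument of \cite[lemma 4]{Onadmissibility}, which establishes upper-semicontinuity of a relaxation functional built from a concave, bounded function on the (weak* closed) constraint set. The two structural facts that make that argument work are present here: first, $\Dfunc$ is concave and continuous on $\R^N$; second, by (H2,$\Dfunc$) it is bounded on $\tilde\Kconstrain$, so $\Dfunc(z(\cdot,t))$ is a bounded family in $\Leb{\infty}(\Omega)$ uniformly for $z\in\SSub$, $t\in I$ (recall $z([0,T])\subset\tilde\BB\Subset\BB$). The only genuinely new ingredient compared with \cite{Onadmissibility} is the presence of the supremum over $t\in I$ rather than a fixed time slice, but this is harmless: a supremum of upper-semicontinuous functions of $z$ need not be upper-semicontinuous in general, so one must instead exploit the joint continuity in $(z,t)$ of $t\mapsto\int_\Omega\Dfunc(z(x,t))\dif x$ together with compactness of $I$.

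The key steps, in order, are as follows. First I would fix $z_n\to z$ in $(\SSub,d)$ and choose, for each $n$, a time $t_n\in I$ (which exists by compactness of $I$ and continuity of $t\mapsto\int_\Omega\Dfunc(z_n(\cdot,t))\dif x$) attaining $\mathcal{J}(z_n)$; passing to a subsequence along which $\mathcal{J}(z_n)\to\limsup_n\mathcal{J}(z_n)$ and $t_n\to t_\ast\in I$. Second, I would show $\int_\Omega\Dfunc(z_n(x,t_n))\dif x\le\int_\Omega\Dfunc(z(x,t_\ast))\dif x+o(1)$. This is where concavity enters: since $z_n(\cdot,t_n)\overset{*}{\rightharpoonup}z(\cdot,t_\ast)$ in $\Leb{\pp}_{w^*}(\Omega)$ — using $d(z_n,z)\to0$ to get $z_n(t_n)\to z(t_\ast)$ in $\Leb{\pp}_{w^*}$, via $d_\BB(z_n(t_n),z(t_\ast))\le d_\BB(z_n(t_n),z(t_n))+d_\BB(z(t_n),z(t_\ast))\le d(z_n,z)+d_\BB(z(t_n),z(t_\ast))\to0$ by continuity of $z$ — the functional $w\mapsto\int_\Omega\Dfunc(w)\dif x$ is weak* upper-semicontinuous on bounded subsets of $\Leb{\pp}_{w^*}(\Omega)$ because $\Dfunc$ is concave and bounded on $\tilde\Kconstrain$ (Jensen/Reshetnyak-type argument, exactly as in \cite{Onadmissibility}). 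Third, chaining these gives $\limsup_n\mathcal{J}(z_n)\le\int_\Omega\Dfunc(z(x,t_\ast))\dif x\le\sup_{t\in I}\int_\Omega\Dfunc(z(x,t))\dif x=\mathcal{J}(z)$, which is upper-semicontinuity. Finally, an upper-semicontinuous function is Baire-1, so by the Baire category facts recalled above its set of continuity points $\SSub_{\mathcal{J}}$ is residual in the complete metric space $\SSub$.

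I expect the main obstacle to be the handling of the moving supremum over $t\in I$: one must be careful that the maximizing times $t_n$ converge (compactness of $I$) and that the weak* convergence of the slices $z_n(t_n)$ to $z(t_\ast)$ survives the simultaneous limit in $n$ and in $t$, which is precisely why the definition of $d$ as a sup-over-time metric on $\mathbf{Y}=\Cont{0}([0,T];(\tilde\BB,d_\BB))$ is used rather than, say, an $\Leb{1}$-in-time metric. Everything else — the concavity/boundedness input yielding weak* upper-semicontinuity of the spatial integral, and the passage from upper-semicontinuity to residuality — is routine and identical to \cite[lemma 4]{Onadmissibility}, which is why the authors omit it.
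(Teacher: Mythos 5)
Your argument is correct and is precisely the adaptation of \cite[lemma 4]{Onadmissibility} that the paper invokes when it omits the proof: the two inputs you isolate (concavity of $\Dfunc$ giving weak* upper-semicontinuity of $w\mapsto\int_\Omega\Dfunc(w)\dif x$, and boundedness of $\Dfunc$ on $\tilde\Kconstrain$) are exactly the ``crucial information'' the authors flag. One small misstatement: the supremum (resp.\ infimum) over a compact time interval is \emph{not} new relative to \cite{Onadmissibility} --- their functional is already an infimum over $t\in[\varepsilon,T]$ and they handle it with the same diagonal extraction of maximizing times $t_n\to t_\ast$ combined with the sup-in-time metric --- so this is a verbatim transcription rather than a new ingredient, but that in no way affects the correctness of your proof.
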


\newpage
The following lemma, which is nothing but a simple observation in Young measure theory, generalizes \cite[lemma 7]{Onadmissibility}. Observe this can be understood as a generalization of Riemann-Lebesgue lemma. For our purpose, since the convex integration method is based on adding suitable perturbations $\tilde{z}_k$ from \textrm{(H1)} to a given $z\in\Sub$, for $h$ as in \textrm{(H1)} and $A=\mathrm{id}$ this lemma will imply $z_k=z+\tilde{z}_k\overset{d}{\rightarrow}z$, whereas for $A=\Hfunc$ as in \textrm{(H2,$\Dfunc$)} (recall def. \ref{semistronglyconcave}) it will imply $\mathcal{J}(z_k)\nrightarrow\mathcal{J}(z)$. Notice the same cannot be done with frequencies $\xi=(0,\xi_0)$. All this allow to prove $\SSub_{\mathcal{J}}\subset\mathcal{J}^{-1}(0)$ and then, by covering $\DomainP$, our quantitative h-principle. 
\begin{lema}\label{lemmaYM} Let $h\in\Leb{\infty}(\T;\R^N)$ and $\xi\in\Sp^{d-1}\times\R$. Then, for every open $\Omega\subset\R^d$, $g\in\Leb{1}(\Omega)$ and $A\in\Cont{}(\R^N)$, 
\begin{equation}\label{lemmaYM1}
\int_{\Omega}g(x)A(h(k\xi\cdot(x,t)))\dif x
\rightarrow\int_{\Omega}g(x)\dif x\int_{\T}A(h(\tau))\dif\tau
\end{equation}
uniformly in $t\in\R$ when $k\rightarrow\infty$.
\end{lema}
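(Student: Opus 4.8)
The plan is to reduce the statement to the classical fact that, for a fixed direction $\xi\in\Sp^{d-1}\times\R$, the family $\tau\mapsto A(h(\tau))$ is in $\Leb{\infty}(\T)$ (since $h$ is bounded and $A$ is continuous, hence bounded on the range of $h$), and that $\Leb{\infty}$-periodic functions oscillating at frequency $k\xi$ converge weak* to their mean. More precisely, set $F=A\circ h\in\Leb{\infty}(\T;\R)$ and write $\bar F=\int_{\T}F(\tau)\dif\tau$. The claim \eqref{lemmaYM1} becomes
$$\int_{\Omega}g(x)\,F(k\xi\cdot(x,t))\dif x\rightarrow\bar F\int_{\Omega}g(x)\dif x$$
uniformly in $t$, for every $g\in\Leb{1}(\Omega)$. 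First I would treat the case $\Omega$ bounded and $g$ smooth (or simply $g=\car{R}$ for a box $R$), where an explicit computation via Fourier series of $F$ on $\T$ gives the rate $\mathcal O(k^{-1})$ with a constant depending only on $\|g\|_{\Cont 1}$ and $\|F\|_{\Leb\infty}$; the spatial frequency $k\xi'$ (with $\xi=(\xi',\xi_0)$ and $\xi'\ne 0$, which is exactly why $\xi=(0,\xi_0)$ is excluded) never vanishes, so each nonzero Fourier mode of $F$ contributes an oscillatory integral that is $\mathcal O(k^{-1})$ uniformly in the translation parameter $t$ (the dependence on $t$ enters only through a unimodular phase $e^{\mathrm{i}k\xi_0 t}$).

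The second step is the density argument: given $g\in\Leb{1}(\Omega)$ and $\varepsilon>0$, choose $g_\varepsilon\in\Cont{1}_c(\Omega)$ with $\|g-g_\varepsilon\|_{\Leb1(\Omega)}<\varepsilon$. Then
$$\left|\int_{\Omega}g\,F(k\xi\cdot(x,t))\dif x-\bar F\int_{\Omega}g\dif x\right|\leq 2\|F\|_{\Leb\infty}\|g-g_\varepsilon\|_{\Leb1}+\left|\int_{\Omega}g_\varepsilon F(k\xi\cdot(x,t))\dif x-\bar F\int_{\Omega}g_\varepsilon\dif x\right|,$$
and the last term tends to $0$ uniformly in $t$ by the first step. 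Since $\|F\|_{\Leb\infty}\leq\sup_{|s|\leq\|h\|_{\Leb\infty}}|A(s)|<\infty$, the right-hand side is $<(2\|F\|_{\Leb\infty}+1)\varepsilon$ for $k$ large, uniformly in $t$, which gives the claim. For unbounded $\Omega$ one first notes $g\in\Leb1(\Omega)$ can be approximated in $\Leb1$ by functions supported in $\Omega\cap B_\rho$, reducing to the bounded case.

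The only mild subtlety — and the step I would be most careful about — is the uniformity in $t\in\R$. This is not an obstacle but must be handled correctly: in the smooth case the bound is genuinely uniform because $t$ only shifts the argument of the periodic function $F$, so the Fourier coefficients are unchanged and the phase factors have modulus one; in the density step the approximation error $2\|F\|_{\Leb\infty}\|g-g_\varepsilon\|_{\Leb1}$ is independent of $t$ altogether. Thus no $t$-dependent constants ever appear, and the convergence is uniform as stated. I would remark explicitly that the exclusion of purely temporal frequencies $\xi=(0,\xi_0)$ is essential: there $\xi'=0$, the integrand $F(k\xi_0 t)$ is spatially constant, and the left-hand side equals $F(k\xi_0 t)\int_\Omega g$, which does not converge to $\bar F\int_\Omega g$ in general — this is the promised link to the restriction in \eqref{TW} and the reason \eqref{LCproblem} must admit oscillations in genuinely spatial directions.
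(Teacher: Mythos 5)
Your proposal is correct and reaches the right conclusion, but the core estimate runs through a genuinely different mechanism than the paper's. Both of you reduce to $g\in C^1_c(\Omega)$ (the density step, the uniformity in $t$, and the explanation of why $\xi=(0,\xi_0)$ is excluded are all essentially identical). The difference is in the heart of the argument: you expand $F=A\circ h$ in its Fourier series on $\T$, treat each mode $\hat F(n)e^{2\pi i n\tau}$ separately, and observe that for $n\ne 0$ the spatial oscillatory integral $\int_\Omega g\,e^{2\pi i nk\xi'\cdot x}\dif x$ is $\mathcal O((|n|k)^{-1})$ while the $t$-dependence sits in a unimodular phase $e^{2\pi i nk\xi_0 t}$. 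The paper instead avoids Fourier analysis entirely: after rotating so that $\xi'$ becomes $e_1$, it integrates by parts once to transfer the derivative onto $G=g\circ O$, producing $\frac1k\int\partial_1 G(x')\int_0^{kx_1'}A(h(\tau+k\xi_0 t))\dif\tau\dif x'$, and then adds and subtracts the same expression with $kx_1'$ replaced by $\lceil kx_1'\rceil$; since $\lceil kx_1'\rceil\in\Z$ and $h$ is $1$-periodic, $\int_0^{\lceil kx_1'\rceil}A(h(\tau+k\xi_0 t))\dif\tau=\lceil kx_1'\rceil\int_\T A(h)$ independently of $t$, and what remains is bounded by $\frac2k\|\partial_1 G\|_\infty\|A\|_{C^0(B_h)}$. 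In short, both of you exploit $\xi'\ne 0$ to gain a factor $k^{-1}$ by integration by parts in the $\xi'$-direction; the paper handles the periodicity by a direct ceiling-function bookkeeping on the primitive of $A\circ h$, while you decompose into harmonics. The paper's route is more elementary and gives an explicit constant in one stroke; yours is arguably more conceptual but requires one extra justification you elided: summing the mode-by-mode bound $|\hat F(n)|\cdot\mathcal O((|n|k)^{-1})$ over $n\ne 0$. This does close, since $F\in L^\infty(\T)\subset L^2(\T)$ implies $\sum_{n\ne 0}|\hat F(n)|/|n|\le\|F\|_{L^2}(\sum n^{-2})^{1/2}<\infty$ by Cauchy--Schwarz (and the partial-sum convergence $F_N\to F$ in $L^2(\T)$ justifies the term-by-term manipulation), but you should say so explicitly, because with only $F\in L^1$ the sum would not obviously converge and the argument would break.
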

\begin{proof} First assume $g\in\Cont{\infty}_c(\Omega)$. If $\xi=(\zeta,\xi_0)$, take an orthonormal basis $\{\zeta_i\}$ of $\R^d$ with $\zeta_1=\zeta$ and $O=(\zeta_1|\cdots|\zeta_d)\in\textrm{SO}(d)$. We make first the change of variables $x=Ox'=\sum_{i=1}^dx_i'\zeta_i$ with $\Omega'=O^{\mathrm{T}}\Omega$ and $G(x')=g(Ox')$
\begin{align*}\int_{\Omega}g(x)A(h(k\xi\cdot(x,t)))\dif x
&=\int_{\Omega'}G(x')A(h(kx_1'+k\xi_0t))\dif x',\\
\int_{\Omega}g(x)\dif x
&=\int_{\Omega'}G(x')\dif x'.
\end{align*}
After that, we integrate by parts
\begin{align*}
\int_{\Omega'}G(x')A(h(kx_1'+k\xi_0t))\dif x'&=
-\frac{1}{k}\int_{\Omega'}\partial_1G(x')\int_{0}^{kx_1'}A(h(\tau+k\xi_0t))\dif\tau\dif x',\\
\int_{\Omega'}G(x')\dif x'
&=-\int_{\Omega'}\partial_1G(x')x_1'\dif x'.
\end{align*}
Finally, by adding and subtracting
$$\frac{1}{k}\int_{\Omega'}\partial_1G(x')\int_{0}^{\lceil kx_1'\rceil}A(h(\tau+k\xi_0t))\dif\tau\dif x'
=\frac{1}{k}\int_{\Omega'}\partial_1G(x')\lceil kx_1'\rceil\int_{0}^{1}A(h(\tau))\dif\tau\dif x'
$$
where $\lceil\cdot\rceil$ is the ceiling function, we get
\begin{align*}&\left|\int_{\Omega}g(x)A(h(k\xi\cdot(x,t)))\dif x
-\int_{\Omega}g(x)\dif x\int_{0}^{1}A(h(\tau))\dif\tau\right|\\
% &=-\int_{\Omega'}\partial_1G(x')\int_{0}^{x_1'}A(h(ks+k\xi_0t))\dif s\dif x'\\
% &=-\int_{\Omega'}\partial_1G(x')\frac{1}{k}\int_{0}^{kx_1'}A(h(\tau+k\xi_0t))\dif\tau\dif x'\\
&=\frac{1}{k}\left|\int_{\Omega'}\partial_1G(x')\left[\int_{0}^{\lceil kx_1'\rceil-kx_1'}A(h(\tau+k\xi_0t))\dif\tau +(kx_1'-\lceil kx_1'\rceil)\int_{0}^{1}A(h(\tau))\dif\tau\right]\dif x'\right|\\
&\leq\frac{2}{k}\norma{\partial_1G}{\Cont{0}(\Omega')}\norma{A}{\Cont{0}(B_h)}
\end{align*}
being $B_h$ the ball of radius $\norma{h}{\Leb{\infty}(\T)}$. Therefore, \eqref{lemmaYM1} follows.
%\begin{align*}\int_{\Omega}g(x)A(h(k\xi\cdot(x,t)))\dif x
%\rightarrow&
%-\int_{\Omega'}\partial_1G(x')x_1'\dif x'\int_{0}^{1}A(h(\tau))\dif\tau
%=\int_{\Omega}g(x)\dif x\int_{0}^{1}A(h(\tau))\dif\tau
%\end{align*}
By density, the result is extended for all $g\in\Leb{1}(\Omega)$.
\end{proof}

The following lemma shows that the $\Lambda$-segments in  \textrm{(H2)} can be selected uniformly away from the boundary on compact sets.
\begin{lema}\label{lemasegmentos} Let $\Lambda$ from \textrm{(H1)} and $\Dfunc,\Hfunc,\Phi$ from \textrm{(H2)}. For every $z\in U$, define
$$\Sigma_z=\{\sigma=[-\bar{z},\bar{z}]\subset\Lambda\,:\,\Hfunc(\bar{z})\geq\Phi(\Dfunc(z)),\, z+\sigma\subset U\}$$
which is non-empty by \textrm{(H2)}. Then, the function
\function{D}{U}{(0,\infty)}{z}{\sup_{\sigma\in\Sigma_z}\dist{z+\sigma}{\partial U}}
is lower-semicontinuous.
%Then, for every compact $C\Subset U$,
%$$\inf_{z\in C}D(z)>0.$$
\end{lema}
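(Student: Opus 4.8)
The plan is to verify lower-semicontinuity directly from the definition: fixing $z\in U$ and a sequence $z_n\to z$ in $U$, I would show $\liminf_n D(z_n)\ge D(z)$. Fix $\varepsilon\in(0,D(z))$ (if $D(z)=\infty$, replace the bound below by an arbitrary $M$ and argue verbatim) and choose a near-optimal segment $\sigma=[-\bar z,\bar z]\in\Sigma_z$, so that $\bar z\in\Lambda$, $\Hfunc(\bar z)\ge\Phi(\Dfunc(z))$, $z+\sigma\subset U$ and $\mathrm{dist}(z+\sigma,\partial U)>D(z)-\varepsilon>0$. The idea is to transport $\sigma$ to a slightly rescaled segment $\sigma_n=[-\bar z_n,\bar z_n]$ based at $z_n$, show $\sigma_n\in\Sigma_{z_n}$, and estimate $\mathrm{dist}(z_n+\sigma_n,\partial U)$ from below.

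The only condition in the definition of $\Sigma_{z_n}$ that can fail for the un-rescaled $\bar z$ is $\Hfunc(\bar z)\ge\Phi(\Dfunc(z_n))$, since $\Dfunc(z_n)$ may exceed $\Dfunc(z)$. To fix this I would set $\bar z_n=(1+\eta_n)\bar z$ with
$$\eta_n=\Bigl(\max\bigl\{1,\ \Phi(\Dfunc(z_n))/\Hfunc(\bar z)\bigr\}\Bigr)^{1/\gamma}-1\ \ge\ 0,$$
where $\gamma\ge1$ is the homogeneity degree of $\Hfunc$ from Definition \ref{semistronglyconcave}. Since $\Dfunc\in\Cont{}(\R^N)$ and $\Phi\in\Cont{}((0,1];(0,\infty))$, and since $\Dfunc(z)>0$ because $\Dfunc|_{\tilde{\Kconstrain}}^{-1}(0)\subset\Kconstrain$ while $z\in U\subset\tilde{\Kconstrain}\setminus\Kconstrain$, we get $\Phi(\Dfunc(z_n))\to\Phi(\Dfunc(z))\le\Hfunc(\bar z)$, hence $\eta_n\to0$. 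By positive $\gamma$-homogeneity, $\Hfunc(\bar z_n)=(1+\eta_n)^\gamma\Hfunc(\bar z)\ge\Phi(\Dfunc(z_n))$, and since $\Lambda$ is a cone, $\bar z_n\in\Lambda$.

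Finally, a generic point $z_n+(1+\eta_n)s\bar z$ of $z_n+\sigma_n$ ($s\in[-1,1]$) differs from the corresponding point $z+s\bar z$ of $z+\sigma$ by a vector of norm at most $r_n:=|z_n-z|+\eta_n|\bar z|\to0$. Hence for $n$ large, $r_n<\mathrm{dist}(z+\sigma,\partial U)$, so $z_n+\sigma_n$ lies in the open tube of radius $\mathrm{dist}(z+\sigma,\partial U)-r_n$ about the compact set $z+\sigma$, which is contained in $U$; thus $z_n+\sigma_n\subset U$, so $\sigma_n\in\Sigma_{z_n}$ and $\mathrm{dist}(z_n+\sigma_n,\partial U)\ge\mathrm{dist}(z+\sigma,\partial U)-r_n>D(z)-\varepsilon-r_n$. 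Therefore $D(z_n)\ge\mathrm{dist}(z_n+\sigma_n,\partial U)>D(z)-\varepsilon-r_n$, and letting $n\to\infty$ yields $\liminf_n D(z_n)\ge D(z)-\varepsilon$; as $\varepsilon\in(0,D(z))$ was arbitrary, $\liminf_n D(z_n)\ge D(z)$.

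I expect the only real subtlety to be the borderline situation $\Hfunc(\bar z)=\Phi(\Dfunc(z))$ with $\Dfunc(z_n)$ increasing, which is what forces the rescaling step. The argument goes through precisely because $z+\sigma$ is compact and sits at strictly positive distance from $\partial U$ (leaving genuine room to absorb both the shift $z_n-z$ and the enlargement $\eta_n\bar z$), and because $\gamma$-homogeneity lets an arbitrarily small dilation of $\bar z$ restore the required lower bound on $\Hfunc$; everything else is routine continuity of $\Dfunc,\Phi$ together with the cone property of $\Lambda$. Note that upper-semicontinuity is not asserted (and generally fails), which is why $D$ is defined as a supremum over all admissible segments rather than along a fixed selection.
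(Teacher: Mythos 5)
Your proof is correct and follows essentially the same strategy as the paper's: pick a near-optimal segment at $z$, dilate it by a factor tending to $1$ (using the $\gamma$-homogeneity of $\Hfunc$ and continuity of $\Phi\circ\Dfunc$) to restore the inequality $\Hfunc\geq\Phi\circ\Dfunc$ at nearby points, and absorb both the base-point shift and the dilation into the positive distance from $z+\sigma$ to $\partial U$. The only cosmetic difference is that the paper's dilation factor $\lambda_k=1+\bigl|(\Phi(\Dfunc(z_k))/\Phi(\Dfunc(z_0)))^{1/\gamma}-1\bigr|$ normalizes by $\Phi(\Dfunc(z_0))$ rather than your $\Hfunc(\bar z)$ and works with a multiplicative $(1-\varepsilon)$ error instead of your additive $\varepsilon$; both variants are valid.
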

\begin{proof} Fix $z_0\in U$ and $0<\varepsilon<\tfrac{1}{2}$. 
The definition of $D$ yields a $\sigma_0=[-\bar{z}_0,\bar{z}_0]\in\Sigma_{z_0}$ so that
$$\dist{z_0+\sigma_0}{\partial U}\geq(1-\varepsilon)D(z_0)>0.$$
Now let $(z_k)\subset U$ with $z_k\rightarrow z_0$. Since $\Phi\circ\Dfunc$ is continuous and positive on $U$, the term
\begin{equation}\label{lemasegmentos1}
\delta_k=\left|\left(\frac{\Phi(\Dfunc(z_k))}{\Phi(\Dfunc(z_0))}\right)^{\frac{1}{\gamma}}-1\right|\rightarrow 0
\end{equation}
where $\gamma$ is the degree of homogeneity of $\Hfunc$ (def. \ref{semistronglyconcave}).
We take $\lambda_k=1+\delta_k$ and $\sigma_k=\lambda_k\sigma_0=[-\lambda_k\bar{z}_0,\lambda_k\bar{z}_0]$. Let us show that, for a big enough $k_0$, we have $\sigma_k\in\Sigma_{z_k}$ for all $k\geq k_0$. On the one hand, since $\Hfunc$ is positive homogeneous of degree $\gamma$, $\sigma_0\in\Sigma_{z_0}$ and \eqref{lemasegmentos1}, we have
$$\Hfunc(\lambda_k\bar{z}_0)=\lambda_k^\gamma\Hfunc(\bar{z}_0)\geq\lambda_k^\gamma\Phi(\Dfunc(z_0))\geq\Phi(\Dfunc(z_k)).$$ 
On the other hand, by adding and subtracting  
$z_0+\lambda\bar{z}_0$, the triangle inequality implies
\begin{align*}
\dist{z_k+\sigma_k}{\partial U}&=\min_{\substack{|\lambda|\leq 1 \\ z'\in\partial U}}|(z_k+\lambda\lambda_k\bar{z}_0)-z'|\\
% &\geq\min_{\substack{|\lambda|\leq 1 \\ \tilde{z}\in\partial U}}|(z+\lambda\bar{z})-\tilde{z}| -|z_k-z|-\frac{1}{k}|\bar{z}|\\
&\geq\dist{z_0+\sigma_0}{\partial U}-|z_k-z_0|-\delta_k|\bar{z}_0|.
\end{align*}
Hence, for a big enough $k_0$, $\dist{z_k+\sigma_k}{\partial U}\geq(1-2\varepsilon)D(z_0)$ and consequently $z_k+\sigma_k\subset U$ for all $k\geq k_0$. Therefore
\begin{equation}\label{lemasegmentos2}
D(z_k)\geq\dist{z_k+\sigma_k}{\partial U}\geq(1-2\varepsilon)D(z_0)
\end{equation}
for all $k\geq k_0$. Finally, by computing the $\liminf$ on \eqref{lemasegmentos2} and then making $\varepsilon\downarrow 0$ we deduce that $D$ is lower-semicontinuous at $z_0$.
\end{proof}

The key point to prove the quantitative h-principle is the following perturbation property. The steps 1, 2 and 4 in the proof are an adaptation of the proof of \cite[prop. 3]{Onadmissibility}. We recall it for convenience. The step 3 is the new requirement from (\textrm{H3},$\Fspace_0$) and our main contribution in this scheme. More precisely, although the approximating sequence is constructed in the same way as in \cite[prop. 3]{Onadmissibility}, we need to check that it belongs to our $\Sub$, i.e., that it satisfies (\textrm{H3},$\Fspace_0$).

\begin{prop}[Perturbation property]\label{perturbationproperty} For every $\mu>0$, there exists $\beta(\Omega,\mu)>0$ such that, for all $z\in\Sub$ satisfying
$$\mathcal{J}(z)\geq\mu,$$
there exists a sequence $(z_k)\subset\Sub$ with $z_k\overset{d}{\rightarrow}z$ so that
\begin{equation}\label{betaprop}
\mathcal{J}(z)\geq\limsup_{k\rightarrow\infty}\mathcal{J}(z_k)+\beta.
\end{equation}
\end{prop}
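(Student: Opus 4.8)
The strategy is the standard convex integration step adapted to the quantitative setting of $\Sub$, following \cite[prop. 3]{Onadmissibility} but with an extra verification in Step 3. Fix $z\in\Sub$ with $\mathcal{J}(z)\geq\mu$. Since $\mathcal{J}(z)=\sup_{t\in I}\int_\Omega\Dfunc(z(x,t))\dif x\geq\mu$, we choose $t^*\in I$ (or a small time-subinterval around it) where $\int_\Omega\Dfunc(z(x,t))\dif x$ is close to $\mathcal{J}(z)$; then on a positive-measure subset of $\Omega$ the value $\Dfunc(z(x,t))$ is bounded below, hence by \textrm{(H3)} $z(\cdot,t)\in U$ there and we may apply \textrm{(H2)}. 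Using lemma \ref{lemasegmentos}, on a compact subset $K\Subset\Omega\times I$ carrying a definite portion of the mass $\int\Dfunc(z)$ we can select a $\Lambda$-segment $\sigma=[-\bar z,\bar z]$ with $\Hfunc(\bar z)\geq\Phi(\Dfunc(z))$ and with $z+\sigma$ staying a fixed distance away from $\partial U$, uniformly on $K$; by continuity of $z$ on $\DomainP$ we can even take $\bar z$ locally constant on a finite partition of $K$ into small cubes $R_j$ (up to an error we absorb later).

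\textbf{Construction of the perturbation.} On each $R_j$, using \textrm{(H1)} with a cutoff $\psi_j\in\Cont\infty_c(R_j)$ and the direction $\bar z_j$, we get localized smooth solutions $\tilde z_{k,j}(y)=\bar z_j h(k\xi_j\cdot y)\psi_j(y)+\mathcal{O}(k^{-1})$ of \eqref{LCproblem}; set $\tilde z_k=\sum_j\tilde z_{k,j}$ and $z_k=z+\tilde z_k$. Because the $\psi_j$ are supported in disjoint cubes well inside $\DomainP$ and $z+\sigma_j$ stays inside $U$ (at distance $\gtrsim$ some $\eta>0$ from $\partial U$), for $k$ large $z_k$ is again a $\tilde{\Kconstrain}$-subsolution, agrees with $\breve z$ outside $\DomainP$, and is perturbable, so condition $(U)$ of def. \ref{spacesubsolutions} holds. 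By lemma \ref{lemmaYM} applied with $A=\mathrm{id}$ (and $g$ the test functions), $\tilde z_k\overset{*}\rightharpoonup 0$ uniformly in $t$, hence $z_k\overset{d}\rightarrow z$ in $(\mathbf{Y},d)$. For the energy gain, apply lemma \ref{lemmaYM} on each $R_j$ with $A=\Hfunc$: since $\Dfunc(z_k)\leq\Dfunc(z)+\Gfunc(z)\cdot\tilde z_k-\Hfunc(\tilde z_k)$ (semistrong concavity, def. \ref{semistronglyconcave}), integrating and passing to the limit the $\Gfunc(z)\cdot\tilde z_k$ term vanishes (weak$^*$ convergence), while $\int_{R_j}\Hfunc(\bar z_j h(k\xi_j\cdot y)\psi_j)\to\big(\int_\T\Hfunc(h(\tau))\dif\tau\big)\int_{R_j}\psi_j\Hfunc(\bar z_j/|\bar z_j|)\cdots$; more directly, using positive homogeneity of $\Hfunc$ and $|h|\leq 1$ one extracts a definite positive lower bound proportional to $\int_\Omega\Phi(\Dfunc(z))$, which by Jensen/monotonicity of $\Phi$ and $\int\Dfunc(z)\geq\mu$ is $\geq\beta(\Omega,\mu)>0$. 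Choosing the supremum time carefully (the sup over $t\in I$ versus the time slice where we perturb) gives \eqref{betaprop}.

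\textbf{Step 3 (the new point).} It remains to check $z_k\in\Sub$, i.e., condition $(\Fspace_0)$: for each $(\FF,g,\mapy)\in\Fspace_0$ and every bounded cube $Q\subset\DomainPy(t)$,
$$\left|\dashint_{\mapy(Q,t)}[\FF(z_k)-\FF(\breve z)](x,t)g(x,t)\dif x\right|\leq C(z_k)\,\Efunc(\expected{Q},t)\,\Qfunc{\mapy(Q,t)}.$$
Write $\FF(z_k)-\FF(\breve z)=(\FF(z)-\FF(\breve z))+(\FF(z_k)-\FF(z))$. The first term is controlled by $C(z)\Efunc\Qfunc{\cdot}$ since $z\in\Sub$, with a constant $C(z)<1$; the point is to estimate the perturbation term $\FF(z_k)-\FF(z)$. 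Here the H\"older triple $\frac1p+\frac1q+\frac1r=1$ enters: since $\FF$ is weak$^*$-continuous on bounded sets and $z_k\to z$, one gets $\FF(z_k)-\FF(z)\to 0$ weakly$^*$ in $\Leb p_{\mathrm{loc}}$, but we need a \emph{uniform} bound over all cubes $Q$, not just for fixed $Q$. This is exactly where the strict inequality $\alpha<\frac1r$ is used: arguing as in \cite[prop. 3]{Onadmissibility} (the role of the spare exponent $r$), one splits into large cubes — where $|\mapy(Q,t)|^\alpha/|\mapy(Q,t)|$ is small and a crude $\Leb\infty$ bound on $\FF(z_k)-\FF(z)$ (valid since $z_k$ stays in the compact $\tilde\BB$) together with H\"older against $g\in\Leb q$ suffices — and small cubes, where one uses that $\tilde z_k$ oscillates at frequency $k$: on a small cube the oscillation averages out, producing a gain $o_k(1)$ that, for $k$ large depending on the (finitely many) data in $\Fspace_0$, beats the factor $\Efunc\Qfunc{\cdot}$ with room to keep $C(z_k)<1$. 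Since $\Fspace_0$ is finite and $\xi_j$, $\bar z_j$, $\psi_j$ are fixed once $z$ is, a single $k_0$ works for all $(\FF,g,\mapy)$ simultaneously.

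\textbf{Main obstacle.} The delicate part is Step 3: obtaining the estimate $(\Fspace_0)$ \emph{uniformly over all cubes $Q\subset\DomainPy(t)$ and all $t$}, rather than just pointwise weak$^*$ convergence for a fixed $Q$. This requires carefully quantifying how the high-frequency perturbation $\tilde z_k$ interacts with the averaging operator $\dashint_{\mapy(Q,t)}(\cdot)g$: for macroscopic cubes one needs the decay of $\Qfunc{\mapy(Q,t)}$ and the $\Leb p$–$\Leb q$ duality (with the slack exponent $r$ absorbing the loss from $\alpha<\frac1r$), while for microscopic cubes one needs an oscillation/cancellation estimate showing the average of $\FF(z_k)-\FF(z)$ is $o_k(1)$ at a rate uniform in the cube. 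Matching these two regimes and propagating the constant $C(z_k)<1$ (so that $z_k\in\Sub$, not merely in its closure) is the crux; the rest is a routine adaptation of \cite[prop. 3]{Onadmissibility}.
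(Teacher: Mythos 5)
Your overall plan (follow \cite[prop.\ 3]{Onadmissibility}, add high-frequency localized perturbations, then verify the extra condition \textrm{(H3,$\Fspace_0$)} by separating cube scales) is the right one, and you correctly identify that the slack $\alpha<\frac1r$ is what makes the new Step~3 work. But there are two concrete gaps, and one of them would make the argument fail as written.

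\textbf{The two cube regimes are interchanged.} You propose, for \emph{large} cubes, a crude $L^p$--$L^q$--$L^r$ H\"older bound using $\FF(\tilde\BB)\Subset\Leb{p}_{w^*}$, and for \emph{small} cubes an oscillation/cancellation estimate ``$o_k(1)$ uniform in the cube''. This is the reverse of what can actually work, and the large-cube half is false as stated. H\"older gives $\bigl|\int_{\mapy(Q,t)}[\FF(z_k)-\FF(z)]g\bigr|\leq 2B\,|\mapy(Q,t)|^{1/r}$, and the target bound after clearing denominators is $C'\Efunc\cdot(1\wedge|\mapy(Q,t)|^{\alpha})$. For \emph{small} cubes ($|\mapy(Q,t)|\leq 1$) the needed inequality $2B|\mapy(Q,t)|^{1/r-\alpha}\leq C'\Efunc$ holds precisely because $\frac1r-\alpha>0$; that is the cheap regime. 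For \emph{large} cubes the target is a \emph{constant} $C'\Efunc$, while $|\mapy(Q,t)|^{1/r}\to\infty$, so H\"older alone cannot close it. Conversely, ``the oscillation averages out at a rate uniform in the cube'' is not true for cubes whose diameter is $\ll 1/k$; there is no uniform oscillation gain at arbitrarily small scales, which is exactly why the paper dispatches small cubes with the H\"older/slack argument and not with oscillation. In the paper's Step~3 the hard regime is the large cubes, handled by (i) restricting to a fixed bounded cube $Q_0$ containing the support of the perturbation, (ii) building a fine homogeneous grid inside $Q_0$ so that every large cube differs from a \emph{finite} collection of grid cubes by a set of controlled measure (inequalities \eqref{Fprop1}--\eqref{Fprop3}), (iii) discretizing time into finitely many $t_c$ using uniform continuity of $t\mapsto g(\cdot,t)\car{\mapy(Q_a'\cap\mapy^{-1}(\Omega,t),t)}$ in $\Leb{p^*}$ (inequality \eqref{Fprop4}), and (iv) only then invoking the weak$^*$ convergence $\FF(z_k)\to\FF(z)$, which is now applied to a \emph{finite} family of test pairs (cube, time) so that a single $k_1$ works. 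Without this reduction, weak$^*$ convergence gives no uniformity over $Q$ and $t$.

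\textbf{The uniform-in-time structure.} You propose selecting a time $t^*$ (or a small subinterval) near the supremum of $\int_\Omega\Dfunc(z(\cdot,t))$ and perturbing there. The paper instead perturbs on the full space-time block using the \emph{shifted grid} of \cite[prop.\ 3 step 1]{Onadmissibility}: for each of the two staggered families $b\in\{0,1\}$ it constructs $\Omega_b^s$, $I_b^s$, and shows via the discretization estimate \eqref{perturbationproperty1} that, at every $t\in I$, at least one family carries a definite fraction of $\int_\Omega\Dfunc(z(\cdot,t))$. This is what turns the gain into a bound on $\mathcal J(z_k)=\sup_{t\in I}\int_\Omega\Dfunc(z_k(\cdot,t))$ rather than a gain at a single time slice. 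The argument you sketch would control $\int_\Omega\Dfunc(z_k(\cdot,t^*))$ but not the supremum over $I$ of $\int_\Omega\Dfunc(z_k(\cdot,t))$, since the perturbation at $t^*$ could in principle raise $\mathcal J$ elsewhere; you need the staggered-in-time covering to preclude this.

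A smaller point: in your energy computation you write $\int_{R_j}\Hfunc(\bar z_j h(k\xi_j\cdot y)\psi_j)\to\bigl(\int_\T\Hfunc(h)\bigr)\int_{R_j}\psi_j\Hfunc(\bar z_j/|\bar z_j|)\cdots$, which does not typecheck. Using the degree-$\gamma$ positive homogeneity of $\Hfunc$ and lemma~\ref{lemmaYM} with $A=\Hfunc$ on the inner cubes $\tilde Q_\zeta$ (where $\psi\equiv 1$), the correct limit is $C_\gamma\Hfunc(\bar z_{\zeta,i})|\tilde Q_\zeta|$ with $C_\gamma=\norma{h}{\Leb{\gamma}(\T)}^{\gamma}$; then Jensen with the convex envelope $\Phi^*$ of $\Phi$ yields the explicit $\beta(\Omega,\mu)$.
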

\begin{proof} \textbf{Step 1. The shifted grid and the discretization.} First we recall how the shifted grid and the discretization are constructed in \cite[prop. 3 step 1]{Onadmissibility}. Let $s>0$ be
the side length of the regular cubes in the grid to be determined.
Denote $I^s=[t_1-s,t_2+s]$ and $I_s=I^s\cap[0,T]=[t_1-s,(t_2+s)\wedge T]$. Fix $0<s_0\leq\frac{1}{2}t_1$ such that $\Omega\times I_{s_0}\Subset\DomainP$.  
%$$\Omega(t)=\map(S\times L\times\{t\})\Domain(t),\quad t\in (0,T],$$
%y
%$$\Omega^h=\{(x,t)\in\R^3\,:\,\, x\in\Omega^h(t),\quad t\in B_h(I)\}.$$
For $\zeta\in\Z^d$, let $Q_\zeta^s$ and $\tilde{Q}_\zeta^s$ be the regular cubes in $\R^d$ centered at $s\zeta$ with side length $s$ and $\frac{3}{4}s$ respectively. Next, for $(\zeta,i)\in\Z^d\times\Z$, depending on $\sum_j\zeta_j\in 2\Z+b$ for some binary number $b\in\{0,1\}$, define
$$C_{\zeta,i}^s=
Q_\zeta^s\times I_{i,b}^s,\quad
\tilde{C}_{\zeta,i}^s=
\tilde{Q}_\zeta^s\times\tilde{I}_{i,b}^s,$$
where $I_{i,b}^s$ and $\tilde{I}_{i,b}^s$ are the intervals in $\R$ centered at $s(i+\frac{b}{2})$ with length $s$ and $\frac{3}{4}s$ respectively
(see \cite[fig. 1]{Onadmissibility}). For each $b\in\{0,1\}$, define
$$\Omega_b^s=
\bigcup_{\substack{C_{\zeta,i}^s\subset\Omega\times I^s \\ {|\zeta|\in 2\Z+b}}}\tilde{Q}_\zeta^s,
\quad
I_b^s=\bigcup_{I_{i,b}^s\subset I^s}\tilde{I}_{i,b}^s.$$
Take 
$\chi\in\Cont{\infty}_c((0,1);[0,1])$ with $\chi|_{(\frac{1}{8},\frac{7}{8})}=1$. Consider
$\psi_{\zeta,i}^s(x,t)=\chi_{\zeta_1}^s(x_1)\cdots\chi_{\zeta_d}^s(x_d)\chi_{i,b}^s(t)=\chi_{\zeta}^s(x)\chi_{i,b}^s(t)$
the corresponding cut-off function on $C_{\zeta,i}^s$ with $\psi_{\zeta,i}^s|_{\tilde{C}_{\zeta,i}^s}=1$, i.e., $\chi_{\zeta_j}^s=\chi\left(\frac{\cdot-s\zeta_j}{s}\right)$ and
$\chi_{i,b}^s=\chi\left(\frac{\cdot-s(i+b/2)}{s}\right)$.
Now, for every $f:\R^{d+1}\rightarrow\R$ we define its discretization in the grid as the simple function
$$\boxdot_sf=\sum_{(\zeta,i)\in\Z^d\times\Z}f(\expected{\tilde{C}_{\zeta,i}^s})\car{\tilde{C}_{\zeta,i}^s}.$$
For uniformly continuous functions $f$ on $\Omega\times I_{s_0}$ it follows that, for any  $b\in\{0,1\}$
$$\sup_{t\in I}\left|\int_{\Omega_b^s}\boxdot_sf(x,t)\dif x-\tfrac{1}{2}\left(\tfrac{3}{4}\right)^d\int_{\Omega}f(x,t)\dif x\right|\rightarrow 0$$
when $s\downarrow 0$. Hence, since $\Dfunc\circ z$ and the constant function $1$ are uniformly continuous on $\Omega\times I_{s_0}$, there exists $0<s_1\leq s_0$ depending on $\Dfunc$, $z$, $\mu$ and $|\Omega|$ so that 
\begin{align}\label{perturbationproperty1}
\sup_{t\in I}\left|\int_{\Omega_b^s}\boxdot_s\Dfunc(z(x,t))\dif x-\tfrac{1}{2}\left(\tfrac{3}{4}\right)^d\int_{\Omega}\Dfunc(z(x,t))\dif x\right|&\leq\tfrac{1}{8}\left(\tfrac{3}{4}\right)^d\mu,\\
\left||\Omega_b^s|-\tfrac{1}{2}\left(\tfrac{3}{4}\right)^d|\Omega|\right|&\leq\tfrac{1}{4}\left(\tfrac{3}{4}\right)^d|\Omega|,
\end{align}
for every $b\in\{0,1\}$ and $0<s\leq s_1$.\\
Now, we need to refine the grid to guarantee that the $\Lambda$-segment from lemma \ref{lemasegmentos} associated to the image of the middle point of each cube in the grid is also away from the boundary for the rest of the points of  the cube.
Since $z(\Omega\times I_{s_0})\Subset U$, lemma \ref{lemasegmentos} ensures that there exists $\delta(z,\Omega,I)>0$ so that, for all $y\in\Omega\times I_{s_0}$ there is a $\Lambda$-segment $\sigma_{z(y)}=[-\bar{z}(y),\bar{z}(y)]\in\Sigma_{z(y)}$  satisfying
$$\dist{z(y)+\sigma_{z(y)}}{\partial U}\geq\delta.$$
Let us fix $0<s_2\leq s_1$ such that
$|z(y)-z(y')|\leq\tfrac{1}{2}\delta$
whenever $\max_j|y_j-y_j'|\leq \tfrac{1}{2}s_2$. From now on, whenever there is no ambiguity, we skip $s_2$ to simplify the notation.\\
\indent \textbf{Step 2. The perturbation.} Here we recall how the perturbation is constructed in \cite[prop. 3 step 2]{Onadmissibility}.
For each $C_{\zeta,i}\subset\Omega\times I^{s_2}$, denote $y_{\zeta,i}=\expected{\tilde{C}_{\zeta,i}}$, $z_{\zeta,i}=z(y_{\zeta,i})$ and  $\sigma_{\zeta,i}=\sigma_{z_{\zeta,i}}$. Let $\xi_{\zeta,i}\in\Sp^{d-1}\times\R$ be the direction and $\tilde{z}_{\zeta,i}^k$ the localized smooth solution in \textrm{(H1)} associated to $\bar{z}_{\zeta,i}$ and $\psi_{\zeta,i}$.
Then, since
$$z(y)+\tilde{z}_{\zeta,i}^k(y)
=(z_{\zeta,i}+\bar{z}_{\zeta,i}h(k\xi_{\zeta,i}\cdot y)\psi_{\zeta,i}(y))+(z(y)-z(y_{\zeta,i}))+\mathcal{O}(k^{-1})$$
for all $y\in C_{\zeta,i}$, for a big enough $k_0$
$$z(y)+\tilde{z}_{\zeta,i}^k(y)\in 
B_{\frac{\delta}{2}+\mathcal{O}(k^{-1})}(z_{\zeta,i}+\sigma_{\zeta,i})\in U$$
for all $y\in C_{\zeta,i}$ and $k\geq k_0$. We define the perturbation as usual
$$\tilde{z}_k=\sum_{C_{\zeta,i}\subset\Omega\times I^{s_2}}\tilde{z}_{\zeta,i}^k\quad\textrm{and}\quad z_k=z+\tilde{z}_k.$$
Hence, for every $\varphi\in\Leb{\qq}(\Domain)$, by applying \textrm{(H1)} and lemma \ref{lemmaYM}, we get
\begin{align*}&\left|\int_{\Domain}\varphi(x)\cdot(z_k(x,t)-z(x,t))\dif x\right|
=\sum_{C_{\zeta,i}\subset\Omega\times I^{s_2}}\left|\int_{Q_\zeta}\varphi(x)\cdot \tilde{z}_{\zeta,i}^k(x,t)\dif x\right|
\\
&\leq\sum_{C_{\zeta,i}\subset\Omega\times I^{s_2}}\Bigg(\norma{\varphi}{\Leb{\qq}(\Domain)}\norma{\tilde{z}_{\zeta,i}^k(\cdot,t)-\bar{z}_{\zeta,i}h(k\xi_{\zeta,i}\cdot (\cdot,t))\psi_{\zeta,i}(\cdot,t)}{\Leb{\pp}(Q_{\zeta})}
\\
&\hspace{2.5cm}
+|\bar{z}_{\zeta,i}|\left|\int_{Q_\zeta}\varphi(x)\cdot\chi_{\zeta}(x)h(k\xi_{\zeta,i}\cdot(x,t))\dif x\right|\Bigg)\\
&\rightarrow \sum_{C_{\zeta,i}\subset\Omega\times I^{s_2}}|\bar{z}_{\zeta,i}|\left|\int_{Q_\zeta}\varphi(x)\cdot\chi_{\zeta}(x)\dif x\right|\left|\int_{\T} h\dif\tau\right|=0
\end{align*}
uniformly in $t\in[0,T]$ when $k\rightarrow\infty$ (notice the sum is finite). Therefore, 
$z_k\rightarrow z$
in $\Cont{0}([0,T];\Leb{\pp}_{w^*}(\Domain))$ when $k\rightarrow\infty$, in particular in $\Cont{0}([0,T];\Leb{\pp}_{\LL}(\Domain;\tilde{\Kconstrain}))$.\\
\indent \textbf{Step 3. The $(\Fspace_0)$-property.} Here we want to show that, for a big enough $k_1\geq k_0$, $z_k\in\Sub$ for all $k\geq k_1$. Since we have checked \textrm{(H3,$U$)}, it is enough to show \textrm{(H3,$\Fspace_0$)}. The idea of the proof is that, for small cubes the property holds immediately, whereas for large cubes one can reduce to a finite number of cubes and times in order to exploit then the convergence in $\Cont{0}([0,T];\Leb{\pp}_{\LL}(\Domain;\tilde{\Kconstrain}))$.
Fix $0<C'(z)<1-C(z)$ where $C(z)\in(0,1)$ is the constant of $z$ from \textrm{(H3,$\Fspace_0$)}. Since $\Fspace_0$ is finite, without loss of generality we may assume $\Fspace_0=\{(\FF,g,\mapy)\}$ for simplicity. If $\tilde{\mapy}:\DomainPy\rightarrow\DomainP$ denotes the homeomorphism defined by $\tilde{\mapy}(x,t)=(\mapy(x,t),t)$, then
$\tilde{\mapy}^{-1}(\Omega\times I_{s_0})\Subset\DomainPy$. On the one hand, the projection of $\tilde{\mapy}^{-1}(\Omega\times I_{s_0})$ into $\R^d$, $\bigcup_{t\in I_{s_0}}\mapy^{-1}(\Omega,t)$,
is bounded so we can take a bounded regular cube $Q_0\subset\R^d$ containing it. On the other hand, we define the constants
$$\epsilon=\epsilon(\Omega,I,\Fspace_0)=1\wedge\inf_{t\in I_{s_0}}\dist{\mapy^{-1}(\Omega,t)}{\partial\DomainPy(t)}>0$$ 
and
$$E=E(\Omega,I,\Fspace_0)=\min\left\lbrace\Efunc(\varsigma,t)\,:\, t\in I_{s_0},\,\varsigma\in\DomainPy(t)\,
\textrm{ s.t. }
\,\dist{\varsigma}{\partial\DomainPy(t)}\geq\tfrac{\epsilon}{2(\sqrt{d}+1)}\right\rbrace>0.$$
Hence, for every cube $Q\subset\DomainPy(t)$ at each $t\in I_{s_2}$ with $\mapy(Q,t)\cap\Omega\neq\emptyset$, necessarily
$$\Efunc(\expected{Q},t)\geq E.$$
Otherwise, it would be $\dist{\expected{Q}}{\partial\DomainPy(t)}<\tfrac{\epsilon}{2(\sqrt{d}+1)}$ and consequently
$$
\dist{\varsigma}{\partial\DomainPy(t)}\leq
\dist{\varsigma}{\expected{Q}}+\dist{\expected{Q}}{\partial\DomainPy(t)}
\leq(\sqrt{d}+1)\dist{\expected{Q}}{\partial\DomainPy(t)}<\tfrac{1}{2}\epsilon
$$
for all $\varsigma\in Q$, so $Q\cap\mapy^{-1}(\Omega,t)=\emptyset$.
Now, since $z\in\Sub$, for every cube $Q\subset\DomainPy(t)$ at each $t\in (0,T]$,
\begin{align*}
&\left|\int_{\mapy(Q,t)}[\FF(z_k)-\FF(\breve{z})](x,t)g(x,t)\dif x\right|\\
&\leq C(z)\Efunc(\expected{Q},t)(1\wedge|\mapy(Q,t)|^\alpha)+\left|\int_{\mapy(Q',t)}[\FF(z_k)-\FF(z)](x,t)g(x,t)\dif x\right|
\end{align*}
where $Q'=Q\cap Q_0$ because there is not perturbation outside $Q_0$.
Indeed, since there is not perturbation outside $\Omega\times I_{s_2}$, it is enough to show that there is a big enough $k_1\geq k_0$ such that
$$\left|\int_{\mapy(Q',t)}[\FF(z_k)-\FF(z)](x,t)g(x,t)\dif x\right|\leq C'E(1\wedge|\mapy(Q',t)|^\alpha)$$
for every cube $Q'\subset Q_0\cap\DomainPy(t)$ at each $t\in I_{s_2}$ and $k\geq k_1$.\\ 
Now, since $z_k([0,T]),z([0,T])\subset\tilde{\BB}\Subset\Leb{\pp}_{\LL}(\Omega;\tilde{\Kconstrain})$ (recall \textrm{(H3)}) and $\FF\in\Cont{}(\Leb{\pp}_{\LL}(\Omega;\tilde{\Kconstrain});\Leb{p}_{w^*}(\Omega))$,
then $\FF(z_k([0,T])),\FF(z([0,T]))\subset\FF(\tilde{\BB})\Subset\Leb{p}_{w^*}(\Omega)$. 
Consider the constant
$$B=B(\Omega,I,\Fspace_0)=\sup_{z\in\tilde{\BB}}\norma{\FF(z)}{\Leb{p}(\Omega)}\norma{g}{\Cont{0}(I_{s_0};\Leb{q}(\Omega))}
<\infty.$$
Then, for every $Q'$ such that $|\mapy(Q',t)|\leq 1$ and
$2B|\mapy(Q',t)|^{\frac{1}{r}-\alpha}\leq
C'E$, H\"{o}lder inequality implies
\begin{align*}
\int_{\mapy(Q',t)}|[\FF(z_k)-\FF(z)](x,t)||g(x,t)|\dif x\leq 2B
|\mapy(Q',t)|^{\frac{1}{r}}\leq C'E(1\wedge|\mapy(Q',t)|^\alpha).
\end{align*}
For all the rest $Q'$, there is a constant $D(\Omega,I,\Fspace_0,C')>0$ such that
\begin{equation}\label{Dconstant}
(1\wedge|\mapy(Q',t)|^\alpha)\geq D.
\end{equation}
Let us fix a fine enough and finite families of cubes and times respectively.
For the cubes, let $j\in\N$ such that
\begin{equation}\label{Fprop1}2B\sup_{t\in I_{s_0}}\norma{\Jacobian_{\mapy(t)}}{\Cont{0}(\mapy^{-1}(\Omega,t))}^{\frac{1}{r}}(2^{1-j}d|Q_0|)^{\frac{1}{r}}
\leq\tfrac{1}{4}C'ED
\end{equation}
where $\Jacobian_{\mapy(t)}$ is the Jacobian of $\mapy(t)$. With this $j$ we construct the homogeneous grid in $Q_0$ with side length $2^{-j}\ell(Q_0)$, being $\ell(Q_0)$ the side length of $Q_0$.
Then, if 
$\{Q_{1}',\ldots Q_{a_0}'\}$ is the finite family of all possible cubes in the grid, for every cube $Q'\subset Q_0$
there is a maximal cube $Q_{a}'\subset Q'$ ($a\in\{1,\ldots,a_0\}$) so that
\begin{equation}\label{Fprop2}
|Q'\setminus Q_{a}'|\leq\underbrace{2d}_{\textrm{faces}}\underbrace{(2^j)^{d-1}}_{cubes}\underbrace{(2^{-j}\ell(Q_0))^d}_{volume}
=2^{1-j}d|Q_0|.
\end{equation}
In particular, for every cube $Q'\subset Q_0\cap\DomainPy(t)$ at each $t\in I_{s_2}$, \eqref{Fprop1}, \eqref{Fprop2} and H\"{o}lder inequality implies
\begin{align}\label{Fprop3}\int_{\mapy(Q'\setminus Q_a',t)}|[\FF(z_k)-\FF(z)](x,t)||g(x,t)|\dif x
&\leq 2B\sup_{t\in I_{s_2}}\norma{\Jacobian_{\mapy(t)}}{\Cont{0}(\mapy^{-1}(\Omega,t))}^{\frac{1}{r}}|Q'\setminus Q_a'|^{\frac{1}{r}}\nonumber\\
&\leq\tfrac{1}{4}C'ED.\end{align}
For the times, since 
$$t\mapsto G_{a}(\cdot,t)=g(\cdot,t)\car{\mapy(Q_a'\cap\mapy^{-1}(\Omega,t),t)}(\cdot)$$
is uniformly continuous from $I_{s_2}$ to $\Leb{p^*}(\Omega)$,
we can take a finite family of times $\{t_1,\ldots,t_{c_0}\}\subset I_{s_2}$ such that, for every $t\in I_{s_2}$ there is $c\in\{1,\ldots, c_0\}$ so that
\begin{equation}\label{Fprop4}
2\sup_{z\in\tilde{\BB}}\norma{\FF(z)}{\Leb{p}(\Omega)}\norma{G_{a}(\cdot,t)-G_{a}(\cdot,t_c)}{\Leb{p^*}(\Omega)}
\leq\tfrac{1}{4}C'ED
\end{equation}
for all $a\in\{1,\ldots,a_0\}$. 
Once we have chosen these families, since $z_k\rightarrow z$
in $\Cont{0}([0,T];\Leb{\pp}_{\LL}(\Domain;\tilde{\Kconstrain}))$ and $\FF\in\Cont{}(\Leb{\pp}_{\LL}(\Omega;\tilde{\Kconstrain});\Leb{p}_{w^*}(\Omega))$,
we can take a big enough $k_1\geq k_0$ such that
\begin{equation}\label{Fprop5}
\sup_{t\in I_{s_2}}\left|\int_{\Omega}[\FF(z_k)-\FF(z)](x,t)G_{a}(x,t_c)\dif x\right|\leq\tfrac{1}{2}C' ED
\end{equation}
for all $a\in\{1,\ldots a_0\}$, $c\in\{1,\ldots c_0\}$ and $k\geq k_1$.
Finally, for any $Q'\subset Q_0\cap\DomainPy(t)$ and $t\in I_{s_2}$ satisfying \eqref{Dconstant} consider $Q_a'$ and $t_c$ as before. Then, 
by adding and subtracting
\begin{align*}
\int_{\mapy(Q_a',t)}[\FF(z_k)-\FF(z)](x,t)g(x,t)\dif x
&=\int_{\mapy(Q_a'\cap\mapy^{-1}(\Omega,t),t)}[\FF(z_k)-\FF(z)](x,t)g(x,t)\dif x\\
&=\int_{\Omega}[\FF(z_k)-\FF(z)](x,t)G_a(x,t)\dif x
\end{align*}
where we have used that $z_k=z$ outside $\Omega$, and
$$\int_{\Omega}[\FF(z_k)-\FF(z)](x,t)G_a(x,t_c)\dif x,$$
\eqref{Fprop3}-\eqref{Fprop5} yields
\begin{align*}
&\left|\int_{\mapy(Q',t)}[\FF(z_k)-\FF(z)](x,t)g(x,t)\dif x\right|\\
&\leq 2B\sup_{t\in I_{s_2}}\norma{\Jacobian_{\mapy(t)}}{\Cont{0}(\mapy^{-1}(\Omega,t))}^{\frac{1}{r}}|Q'\setminus Q_a'|^{\frac{1}{r}}+
2\sup_{z\in\tilde{\BB}}\norma{\FF(z)}{\Leb{p}(\Omega)}\norma{G_{a}(\cdot,t)-G_{a}(\cdot,t_c)}{\Leb{p^*}(\Omega)}\\
&\quad+
\left|\int_{\Omega}[\FF(z_k)-\FF(z)](x,t)G_a(x,t_c)\dif x\right|\leq C' ED
\end{align*}
for all $k\geq k_1$.\\
%In summary, we have checked that $(z_k)\subset\Sub$ with $z_k\overset{d}{\rightarrow}z$.\\
\indent \textbf{Step 4. The $\beta$-property.} Here we follow \cite[prop. 3 step 3]{Onadmissibility} but replacing ``$e-\tfrac{1}{2}|v|^2$'' by $\Dfunc$. Let $b\in\{0,1\}$. Then, for every $C_{\zeta,i}\subset\Omega_b$,	$$\Hfunc(\tilde{z}_k(y))=\Hfunc(\bar{z}_{\zeta,i}h(k\xi_{\zeta,i}\cdot y)),\quad y\in \tilde{C}_{\zeta,i},$$
where $\Hfunc$ is given in def. \ref{semistronglyconcave}. Hence, lemma \ref{lemmaYM} implies
$$\lim_{k\rightarrow\infty}
\int_{\tilde{Q}_\zeta}\Hfunc(\tilde{z}_k(x,t))\dif x=
\int_{\tilde{Q}_\zeta}\int_{\T}\Hfunc(\bar{z}_{\zeta,i}h(\tau))\dif\tau\dif x
=C_\gamma\Hfunc(\bar{z}_{\zeta,i})|\tilde{Q}_\zeta|$$
uniformly in $t\in I_b^{s_2}\cap I$, where $C_\gamma=\norma{h}{\Leb{\gamma}(\T)}^\gamma>0$. Let $\Phi^*$ be the convex-envelope (see \cite[def. 1.7]{Rigidity}) of $\Phi$ in \textrm{(H2)}, which is also increasing with $\Phi^*\in\Cont{}((0,1];(0,\infty))$ and $\Phi^*\leq\Phi$. Then, 
$$\Hfunc(\bar{z}_{\zeta,i})\geq\Phi^*(\Dfunc(z_{\zeta,i}))
=\Phi^*(\boxdot\Dfunc(z(y))),
\quad y\in\tilde{C}_{\zeta,i}.$$
Therefore, for each $b\in\{0,1\}$, by summing over all the cubes and applying Jensen inequality, we get
\begin{align*}&\lim_{k\rightarrow\infty}\int_{\Omega_b}\Hfunc(\tilde{z}_k(x,t))\dif x
\geq C_\gamma\int_{\Omega_b}\Phi^*(\boxdot\Dfunc(z(x,t)))\dif x\\
&\geq C_\gamma|\Omega_b|\Phi^*\left(\frac{1}{|\Omega_b|}\int_{\Omega_b}\boxdot\Dfunc(z(x,t))\dif x\right)
\geq \tfrac{C_\gamma}{4}\left(\tfrac{3}{4}\right)^d|\Omega|\Phi^*\left(\frac{1}{|\Omega|}\int_{\Omega_b}\boxdot\Dfunc(z(x,t))\dif x\right)
\end{align*}
uniformly in $t\in I_b^{s_2}\cap I$. In general,
\begin{align*}\liminf_{k\rightarrow\infty}\int_{\Omega}\Hfunc(\tilde{z}_k(x,t))\dif x
\geq \tfrac{C_\gamma}{4}\left(\tfrac{3}{4}\right)^d|\Omega|\Phi^*\left(\frac{1}{|\Omega|}\min_{b\in\{0,1\}}\int_{\Omega_b}\boxdot\Dfunc(z(x,t))\dif x\right)
\end{align*}
uniformly in $t\in(I_0^{s_2}\cup I_1^{s_2})\cap I=I$.
Since $z_k=z+\tilde{z}_k$, the semistrongly concavity of $\Dfunc$ implies
\begin{align*}
&\int_{\Omega}\Dfunc(z_k(x,t))\dif x
\leq\int_{\Omega}\Dfunc(z(x,t))\dif x
+\int_{\Omega}\Gfunc(z(x,t))\cdot\tilde{z}_k(x,t)\dif x
-\int_{\Omega}\Hfunc(\tilde{z}_k(x,t))\dif x.
\end{align*}
Since $\Gfunc\circ z$ is uniformly continuous on $\Omega\times I_{s_0}$, without loss of generality (by taking a subsequence and relabelling if necessary), we may assume the linear term goes to zero uniformly in $t\in I$ when $k\rightarrow\infty$. 
%More precisely, for each $n\in\N$, we can take (first a finite collection of times and then) a big enough $k(n)$ such that
%$$\sup_{t\in I}\left|\int_{\Omega}\Gfunc(z(x,t))\cdot\tilde{z}_{k(n)}(x,t)\dif x\right|\leq\frac{1}{n}.$$
%Thus, without loss of generality (by taking a subsequence and relabelling if necessary), we have
%$$\lim_{k\rightarrow\infty}\int_{\Omega(t)}\Gfunc(z(x,t))\cdot\tilde{z}_{k}(x,t)\dif x=0$$
%uniformly in $t\in I$. 
Then,
\begin{align*}
&\limsup_{k\rightarrow\infty}\int_{\Omega}\Dfunc(z_k(x,t))\dif x
\leq\int_{\Omega}\Dfunc(z(x,t))\dif x-\tfrac{C_\gamma}{4}\left(\tfrac{3}{4}\right)^d|\Omega|\Phi^*\left(\frac{1}{|\Omega|}\min_{b\in\{0,1\}}\int_{\Omega_b}\boxdot\Dfunc(z(x,t))\dif x\right)
\end{align*}
uniformly in $t\in I$. Finally, at each $t\in I$, if
$$\int_{\Omega}\Dfunc(z(x,t))\dif x\leq\tfrac{1}{2}\mu,
$$
then directly
$$\limsup_{k\rightarrow\infty}\int_{\Omega}\Dfunc(z_k(x,t))\dif x\leq\tfrac{1}{2}\mu\leq\mathcal{J}(z)-\tfrac{1}{2}\mu.$$
Otherwise, by applying \eqref{perturbationproperty1}, 
$$\limsup_{k\rightarrow\infty}\int_{\Omega}\Dfunc(z_k(x,t))\dif x\leq
\mathcal{J}(z)-\tfrac{C_\gamma}{4}\left(\tfrac{3}{4}\right)^d|\Omega|\Phi^*\left(\tfrac{1}{8}\left(\tfrac{3}{4}\right)^d\frac{\mu}{|\Omega|}\right).
$$
In general,
\eqref{betaprop} holds for
$$\beta(\Omega,\mu)=\min\left\{\tfrac{1}{2}\mu, \tfrac{C_\gamma}{4}\left(\tfrac{3}{4}\right)^d|\Omega|\Phi^*\left(\tfrac{1}{8}\left(\tfrac{3}{4}\right)^d\frac{\mu}{|\Omega|}\right)\right\}.$$
This concludes the proof.
\end{proof}

As a consequence of the perturbation property, we deduce the quantitative h-principle that we are looking for. In addition, we show a corollary which can be though as a generalization of the ``mix in space at each time slice'' property (def. \ref{degradada}\ref{DMS2}) for the Muskat-Mixing problem.\newpage

\begin{thm}[Quantitative h-principle]\label{HP} The set of functions $z\in\SSub$ satisfying:
\begin{enumerate}[(i)]
\item\label{HP1} $z$ is an exact solution.
\item\label{HP2} For all $(\FF,g,\mapy)\in\Fspace_0$, at each $t\in(0,T]$
$$
\left|\dashint_{\mapy(Q,t)}\left[\FF(z)-\FF(\breve{z})\right](x,t)g(x,t)\dif x\right|\leq\Efunc(\expected{Q},t)\Qfunc{\mapy(Q,t)}
$$
for every non-empty (non-necessarily regular) bounded cube $Q\subset\DomainPy(t)$.
\end{enumerate}
contains a residual set in $\SSub$. 
\end{thm}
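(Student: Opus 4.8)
The plan is to run the Baire-category scheme of \cite[sec.~3]{Onadmissibility}, the only genuinely new ingredient being the propagation of the $(\Fspace_0)$-property into the closure. Fix one box $\Omega\times I\Subset\DomainP$ of the type constructed before lemma \ref{semicontinua}, with its relaxation-error functional $\mathcal{J}$; by lemma \ref{semicontinua} the set $\SSub_{\mathcal{J}}$ of continuity points of $\mathcal{J}$ is residual in $\SSub$. The heart of the argument is the inclusion $\SSub_{\mathcal{J}}\subset\mathcal{J}^{-1}(0)$, which I would prove by contradiction. Suppose $z\in\SSub_{\mathcal{J}}$ has $\mathcal{J}(z)=\mu>0$; set $\beta=\beta(\Omega,\mu/2)>0$ from proposition \ref{perturbationproperty} and, using continuity of $\mathcal{J}$ at $z$, pick $\eta>0$ with $|\mathcal{J}(w)-\mu|<\min\{\mu/2,\beta/2\}$ whenever $d(w,z)<2\eta$. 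Since $\Sub$ is dense in $\SSub$, choose $w\in\Sub$ with $d(w,z)<\eta$; then $\mathcal{J}(w)>\mu/2$, so proposition \ref{perturbationproperty} yields $(w_k)\subset\Sub$ with $w_k\overset{d}{\rightarrow}w$ and $\mathcal{J}(w)\geq\limsup_k\mathcal{J}(w_k)+\beta$, hence $\limsup_k\mathcal{J}(w_k)<\mu-\beta/2$; but $d(w_k,z)<2\eta$ for large $k$ gives $\mathcal{J}(w_k)>\mu-\beta/2$, a contradiction. Thus $\mathcal{J}(z)=0$, and then \eqref{J0} gives $z(x,t)\in\Kconstrain$ for $\almost x\in\Omega$ and every $t\in I$.

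Next I would globalize. Since $\DomainP$ is open in $\R^d\times(0,T]$, hence Lindel\"of, cover it by countably many boxes $\Omega_n\times I_n\Subset\DomainP$ of the above type and set $R=\bigcap_n\SSub_{\mathcal{J}_n}$, which is residual (in particular dense) in $\SSub$ by Baire's theorem. For $z\in R$ and fixed $t\in(0,T]$, the previous step gives $z(\cdot,t)\in\Kconstrain$ $\almost$ on each $\Omega_n$ with $t\in I_n$; since $\DomainP(t)\subset\bigcup_{n:\,t\in I_n}\Omega_n$ and countable unions of null sets are null, $z(\cdot,t)\in\Kconstrain$ $\almost$ on $\DomainP(t)$, while condition $(U)$ of definition \ref{spacesubsolutions} gives $z=\breve{z}\in\Kconstrain$ $\almost$ on the complement, so $z(\cdot,t)\in\Kconstrain$ $\almost$ on $\Domain$. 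At $t=0$ every perturbation built in proposition \ref{perturbationproperty} is supported away from $\{t=0\}$, so $z|_{t=0}=\breve{z}|_{t=0}=z_0$. Finally the stationary system $\LL\cdot\nabla z=0$, the conservation law $C_0\partial_t z+\CC\cdot\nabla_x z=0$ and the initial condition are linear and stable under weak* limits in $\Cont{0}([0,T];\Leb{\pp}_{\LL}(\Domain;\tilde{\Kconstrain}))$, and hold for every element of $\Sub$, hence for every $z\in\SSub$. Therefore every $z\in R$ satisfies \ref{HP1}.

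For \ref{HP2} the point is that it is a closed condition on $\SSub$ that is already satisfied on $\Sub$. Fix $(\FF,g,\mapy)\in\Fspace_0$, $t\in(0,T]$ and a cube $Q$ with $\overline{Q}\Subset\DomainPy(t)$; then $w\mapsto\int_{\mapy(Q,t)}[\FF(w)-\FF(\breve{z})](x,t)g(x,t)\dif x$ is continuous on $\SSub$, because $w^{(m)}\to w$ in $\SSub$ gives $w^{(m)}(t)\overset{*}{\rightharpoonup}w(t)$ in $\Leb{\pp}(\Omega)$ for any bounded $\Omega\supset\mapy(Q,t)$, whence $\FF(w^{(m)}(t))\overset{*}{\rightharpoonup}\FF(w(t))$ in $\Leb{p}(\Omega)$ by continuity of $\FF$, while $\car{\mapy(Q,t)}g(t)\in\Leb{q}(\Omega)\hookrightarrow\Leb{p^*}(\Omega)$ since $q>p^*$ and $|\Omega|<\infty$. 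Arbitrary bounded cubes $Q\subset\DomainPy(t)$ are then reached by exhausting $Q$ by concentric subcubes $Q_j$ with $\overline{Q_j}\Subset\DomainPy(t)$, using $\expected{Q_j}\to\expected{Q}$, $|\mapy(Q_j,t)|\to|\mapy(Q,t)|$ and dominated convergence (or $\Qfunc{\mapy(Q,t)}=0$ when the image is unbounded). Since every $z\in\Sub$ satisfies the inequality of \ref{HP2} even with the strictly smaller factor $C(z)<1$, it passes to $\overline{\Sub}=\SSub$. Hence $R\subset\{z\in\SSub:\ \ref{HP1}\ \text{and}\ \ref{HP2}\ \text{hold}\}$, with $R$ residual, which is the claim.

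I do not expect the main difficulty to lie in this assembly, which is essentially bookkeeping; the real work has been discharged in lemma \ref{semicontinua} and in proposition \ref{perturbationproperty} (in particular its step~3). Within the present argument the only points needing care are: choosing $\eta$ so that both $\mathcal{J}(w)>\mu/2$ (to invoke proposition \ref{perturbationproperty}) and $|\mathcal{J}(w_k)-\mu|<\beta/2$ (to close the contradiction) hold at once, which works because $\beta$ depends only on the fixed data $\Omega,\mu$; arranging the covering of $\DomainP$ by boxes whose time-intervals actually reach every slice $t$; and the weak* passage to the limit in \ref{HP2} for arbitrary, possibly non-regular and large, cubes, where precisely the margin $q>p^*$ left in the definition of $\Fspace$ guarantees that $\car{\mapy(Q,t)}g(t)$ is an admissible test function.
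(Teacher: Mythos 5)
Your proposal is correct and follows essentially the same route as the paper: lemma \ref{semicontinua} makes $\SSub_{\mathcal{J}}$ residual, proposition \ref{perturbationproperty} gives $\SSub_{\mathcal{J}}\subset\mathcal{J}^{-1}(0)$, a countable cover of $\DomainP$ yields exactness, and weak* continuity of the $\Fspace_0$-functionals plus density of $\Sub$ propagates \ref{HP2} to $\SSub$. The only cosmetic difference is that the paper cites \cite{Onadmissibility} for the Baire-category contradiction argument while you unpack it, and it leaves the closedness of condition \ref{HP2} as an immediate consequence of the definition of $\Fspace_0$ rather than spelling out the $q>p^*$ embedding and the exhaustion by compactly contained subcubes.
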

\begin{proof} Let us start showing that all $z\in\SSub$ satisfies \textit{\ref{HP2}}. Take $(z_k)\subset\Sub$ with 
$z_k\overset{d}{\rightarrow}z$. Fix $t\in(0,T]$ and  $Q\subset\DomainPy(t)$. Hence, since $(\FF,g,\mapy)\in\Fspace_0$,
we have
$$\int_{\mapy(Q,t)}\FF(z_k)(x,t)g(x,t)\dif x\rightarrow\int_{\mapy(Q,t)}\FF(z)(x,t)g(x,t)\dif x$$
when $k\rightarrow\infty$. This implies \textit{\ref{HP2}}. 
Let us show now that the set of exact solutions contains a residual set. For all $(x_0,t_0)\in\DomainP$ let $(x_0,t_0)\in\Omega\times I\Subset\DomainP$ and the associated relaxation-error functional $\mathcal{J}$.	
Following \cite{Onadmissibility}, the perturbation property implies $\SSub_{\mathcal{J}}\subset\mathcal{J}^{-1}(0)$.  
%Given $z\in\SSub_{\mathcal{J}}$, suppose $\mathcal{J}(z)=\alpha>0$.
%Take $(z_j)\subset\Sub$ with $z_j\overset{d}{\rightarrow}z$. Since $\mathcal{J}$ is continuous at $z$, $\mathcal{J}(z_j)\rightarrow\mathcal{J}(z)$, so there si a big enough $j_0$ such that $\mathcal{J}(z_j)\geq\frac{\alpha}{2}$ for all $j\geq j_0$. By applying the perturbation property, for each $j\geq j_0$ there is a sequence $(z_{j,k})\subset\Sub$ with $z_{j,k}\overset{d}{\rightarrow}z_j$ and
%$$\mathcal{J}(z_j)\geq\limsup_{k\rightarrow\infty}\mathcal{J}(z_{j,k})+\beta\left(\Omega,\tfrac{1}{2}\alpha\right).$$
%Now, %since $\SSub$ is a complete metric space, 
%a diagonal argument provides a sequence $(z_{j,k(j)})\subset\Sub$ with $z_{j,k(j)}\overset{d}{\rightarrow}z$ and
%$$\mathcal{J}(z_j)\geq\mathcal{J}(z_{j,k(j)})-\tfrac{1}{j}+\beta,$$
%but this contradicts the continuity of $\mathcal{J}$ at $z$. 
Thus, %if we consider all the  $(x_0,t_0)\in\Omega\times I^\circ$, %associated to all the points $(x_0,t_0)\subset\DomainP$, 
by covering $\DomainP$ (second countable) with a countable family $\{\Omega_j\times I_j\}_{j\in\N}$, we deduce that
$\bigcap_{j\in\N}\SSub_{\mathcal{J}_{j}}$
is a residual set in $\SSub$ contained in
$\bigcap_{j\in\N}\mathcal{J}_{j}^{-1}(0)\subset\{z\in\SSub\,:\, z\textrm{ exact solution}\}$ (recall \eqref{J0}).
\end{proof}

\begin{cor}\label{corHP} Suppose that for some $(\FF,g,\mapy)\in\Fspace_0$ there is a compact set $C\Subset\R$ so that, for every  $z\in\Sub$:
\begin{itemize}[(iii)]
	\item\label{HPMixprop} At each $t\in(0,T]$, for every cube $Q\Subset\DomainPy(t)$ with rational extreme points
	$$\int_{\mapy(Q,t)}\FF(z)(x,t)g(x,t)\dif x\notin C.$$
\end{itemize}
Then, the set of functions $z\in\SSub$ satisfying \ref{HP1}, \ref{HP2} and \textit{(iii)} contains a residual set in $\SSub$.
\end{cor}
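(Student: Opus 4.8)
The plan is to deduce Corollary \ref{corHP} from Theorem \ref{HP} by a Baire‑category argument showing that the ``open'' condition \textit{(iii)}, which holds on the dense set $\Sub$ by hypothesis, survives on a residual subset of the closure $\SSub$. For a cube $Q$ with rational extreme points write $\Psi_{z,Q}(t)=\int_{\mapy(Q,t)}\FF(z)(x,t)g(x,t)\dif x$ and $V_Q=\{t\in(0,T]\,:\,Q\Subset\DomainPy(t)\}$; by the tube lemma $V_Q$ is open in $(0,T]$ (and may be empty, in which case $Q$ imposes nothing).

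The main step is to prove that for every compact $K\subset V_Q$ the evaluation map $z\mapsto\Psi_{z,Q}(\cdot)$ is continuous from $(\SSub,d)$ into $\Cont{0}(K)$. I would argue by contradiction and compactness: if $z_k\overset{d}{\rightarrow}z$ but $\sup_{t\in K}|\Psi_{z_k,Q}(t)-\Psi_{z,Q}(t)|\not\to0$, choose $t_k\in K$ realizing the failure and, along a subsequence, $t_k\to t^*\in K$. Then $z_k(t_k)\rightarrow z(t^*)$ in $\Leb{\pp}_{w^*}(\Domain)$ by the triangle inequality for $d_{\BB}$ and the continuity of the curve $z$, so $\FF(z_k(t_k))\overset{*}{\rightharpoonup}\FF(z(t^*))$ in $\Leb{p}(\Omega')$ on a bounded open $\Omega'\supset\overline{\bigcup_{t\in K}\mapy(Q,t)}$ (the latter set is bounded since $\tilde\mapy$ is continuous on the compact $\overline Q\times K\subset\DomainPy$), using \textrm{(H3)} and the weak\,$*$‑continuity of $\FF$; simultaneously $g(t_k)\car{\mapy(Q,t_k)}\rightarrow g(t^*)\car{\mapy(Q,t^*)}$ in $\Leb{p^*}(\Omega')$, using the time‑continuity of $g$ and $\mapy$, absolute continuity of the Lebesgue integral to absorb $|\mapy(Q,t_k)\triangle\mapy(Q,t^*)|\to0$, and the embedding $\Leb{q}(\Omega')\hookrightarrow\Leb{p^*}(\Omega')$ available because $q>p^*$ (equivalently $r\neq\infty$). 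Pairing the two convergences gives $\Psi_{z_k,Q}(t_k)\to\Psi_{z,Q}(t^*)$, and likewise $\Psi_{z,Q}(t_k)\to\Psi_{z,Q}(t^*)$, a contradiction. The same computation with $z_k$ fixed shows $\Psi_{z,Q}\in\Cont{0}(V_Q)$ for every $z\in\SSub$.

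With this continuity in hand, $\{f\in\Cont{0}(K)\,:\,f(K)\cap C=\emptyset\}$ is open (two disjoint compacts lie at positive distance), hence so is its preimage $A_{Q,K}=\{z\in\SSub\,:\,\Psi_{z,Q}(t)\notin C\ \forall t\in K\}$. Exhausting $V_Q$ by compacts $K_1\subset K_2\subset\cdots$ gives $G_Q:=\{z\in\SSub\,:\,\Psi_{z,Q}(t)\notin C\ \forall t\in V_Q\}=\bigcap_nA_{Q,K_n}$, a $G_\delta$ set; it is dense because the hypothesis yields $\Sub\subset G_Q$ and $\Sub$ is dense in $\SSub$ by construction. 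Thus each $G_Q$ is residual, and so is $\mathcal R:=\mathcal R_{\mathrm{HP}}\cap\bigcap_{Q}G_Q$, where $\mathcal R_{\mathrm{HP}}$ is the residual set of Theorem \ref{HP} and $Q$ runs over the countable family of cubes with rational extreme points. Every $z\in\mathcal R$ is an exact solution satisfying \textit{(ii)}, and since any $t\in(0,T]$ with $Q\Subset\DomainPy(t)$ lies in $V_Q$, it also satisfies \textit{(iii)}; this proves the corollary.

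I expect the continuity statement $z\mapsto\Psi_{z,Q}(\cdot)\in\Cont{0}(K)$ to be the only genuine obstacle: weak\,$*$‑continuity of $\FF$ is a priori only pointwise in $t$, and upgrading it to uniformity on a compact time set relies on $z([0,T])\subset\tilde{\BB}$ being weak\,$*$‑compact and metrizable together with the compactness of the family $\{g(t)\car{\mapy(Q,t)}\}_{t\in K}$ in $\Leb{p^*}(\Omega')$ — which is precisely where the strict H\"{o}lder balance $\frac1p+\frac1q<1$ built into $\Fspace$ is needed. Everything else (the tube lemma, the elementary $G_\delta$/compact‑exhaustion manipulation, and the Baire facts recalled before Lemma \ref{semicontinua}) is routine.
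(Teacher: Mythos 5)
Your proposal is correct and follows essentially the same route as the paper's proof, differing only in how the Baire‑category bookkeeping is organized. The paper indexes by pairs $(Q,I)$ of cubes and time intervals with rational extreme points and $Q\times I\Subset\DomainPy$, shows that the ``bad'' set $C_{Q,I}=\{z\in\SSub: \Psi_{z,Q}(t)\in C\text{ for some }t\in I\}$ is closed via a sequential argument (extract $t_k\to t_0$, $c_k\to c_0$, pass to the limit), and concludes it is nowhere dense because $C_{Q,I}\cap\Sub=\emptyset$ and $\SSub=\overline{\Sub}$. You instead fix $Q$, exhaust the open time‑set $V_Q$ by compacts, and prove the ``good'' sets $A_{Q,K}$ are open by upgrading the pointwise weak*‑continuity of $\FF$ to a continuity statement $z\mapsto\Psi_{z,Q}\in\Cont{0}(K)$; the openness of $A_{Q,K}$ then follows since $\{f\in\Cont{0}(K):f(K)\cap C=\emptyset\}$ is open. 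These are mirror formulations of the same fact, and the crucial estimate is identical in both: pair weak*-convergence $\FF(z_k(t_k))\overset{*}{\rightharpoonup}\FF(z(t^*))$ (obtained from $d_{\BB}(z_k(t_k),z(t^*))\le d(z_k,z)+d_{\BB}(z(t_k),z(t^*))\to 0$) with strong $\Leb{p^*}$-convergence of $g(\cdot,t)\car{\mapy(Q,t)}$, which is where the slack $q>p^*$ (i.e.\ $r\neq\infty$) built into $\Fspace$ is used. Your version is a slightly tidier packaging — the uniform‑in‑$t$ continuity is made explicit and the exhaustion by compacts replaces the rational‑interval covering — but it buys nothing substantive beyond what the paper already does; both verify the same sequential limit and invoke the same Baire facts.
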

\begin{proof} Let $Q$ be a cube and $I$ a time interval with rational extreme points and $Q\times I\Subset\DomainPy$. Define
$$C_{Q,I}=\left\{z\in\SSub\,:\,\int_{\mapy(Q,t)}\FF(z)(x,t)g(x,t)\dif x\in C\quad\textrm{for some }t\in I\right\}.$$
We claim that $C_{Q,I}$ is closed in $\SSub$. Let $(z_k)\subset C_{Q,I}$ with $z_k\rightarrow z$ in $\SSub$. By definition, for each $k$ there is $t_k\in I$ and $c_k\in C$ such that
$$\int_{\mapy(Q,t_k)}\FF(z)(x,t_k)g(x,t_k)\dif x=c_k.$$
Since $I$ and $C$ are compact, without loss of generality we may assume $t_k\rightarrow t_0$ and $c_k\rightarrow c_0$ for some $t_0\in I$ and $c_0\in C$ respectively. On the one hand, since
$\Omega=\bigcup_{t\in I}\mapy(Q,t)$
is bounded in $\Domain$ and
$$t\mapsto G(\cdot,t)=g(\cdot,t)\car{\mapy(Q,t)}(\cdot)$$
is continuous from $I$ to $\Leb{p^*}(\Omega)$, then
\begin{align*}&\int_{\Omega}|\FF(z_k)(x,t_k)||G(x,t_k)-G(x,t_0)|\dif x	\leq\sup_{z\in\tilde{\BB}}\norma{\FF(z)}{\Leb{p}(\Omega)}\norma{G(\cdot,t_k)-G(\cdot,t_0)}{\Leb{p^*}(\Omega)}\rightarrow 0
\end{align*}
when $k\rightarrow\infty$. On the other hand, since
\begin{align*}
d_{\BB}(z_k(t_k),z(t_0))
% &\leq d_{\BB}(z_k(t_k),z(t_k))+d_{\BB}(z(t_k),z(t_0))\\
&\leq d(z_k,z)+d_{\BB}(z(t_k),z(t_0)) \rightarrow 0
\end{align*}
when $k\rightarrow\infty$, we have $\FF(z_k(t_k))\rightarrow\FF(z(t_0))$
in $\Leb{p}_{w^*}(\Domain)$ and
$$\lim_{k\rightarrow\infty}\int_{\Omega}\FF(z_k)(x,t_k)G(x,t_0)\dif x
=\int_{\Omega}\FF(z)(x,t_0)G(x,t_0)\dif x.$$
Therefore,
$$\int_{\mapy(Q,t_0)}\FF(z)(x,t_0)g(x,t_0)\dif x
=\lim_{k\rightarrow\infty}\int_{\mapy(Q,t_k)}\FF(z_k)(x,t_k)g(x,t_k)\dif x
=\lim_{k\rightarrow\infty}c_k=c_0,$$
and $z\in C_{Q,I}$.
Since $C_{Q,I}\cap\Sub=\emptyset$ and $C_{Q,I}\subset\SSub=\overline{\Sub}$, necessarily $(C_{Q,I})^\circ=\emptyset$, so $\SSub\setminus(C_{Q,I})$
is open and dense in $\SSub$. Since they are countable, the intersection is a residual set in $\SSub$. 
\end{proof}

\begin{Rem} For the Muskat-Mixing problem, we shall prove at the end of section \ref{sec:Proof} that \textit{(iii)} holds for all open $\Omega$ instead of only cubes $Q$ with ration extreme points (for $\mapy(t)=\mathrm{id}_{\InterphaseMix(t)}$).
\end{Rem}

\section{Proof of the main results}\label{sec:Proof}

In this section we prove theorems \ref{thmprincipal} and \ref{thmprincipal2} as particular cases of theorem \ref{HP}. For the Muskat-Mixing problem, let us fix $0<c<2$ and $f_0\in\Hil{5}(\R)$. By virtue of \cite[thm. 4.1]{Mixing}, there is $f\in\Cont{0}([0,T];\Hil{4}(\R))$  solving a suitable Cauchy problem for $f_0$ (see \cite[(1.11)]{Mixing}). We consider the map $\map$ associated to $f$ and $c$ and we define the coarse-grained density $\breve{\rho}$ adapted to it \eqref{densidadprincipal} and $\breve{\velocity}=\BSO(-\partial_{1}\breve{\rho})$ (see \cite[(4.13)]{Mixing}). Thus, the domain of perturbation is
$$\DomainP(t)=\InterphaseMix(t),\quad t\in(0,T].$$
The divergence-free expression \eqref{LCproblem1} of the relaxation of the Muskat-Mixing problem is
\begin{equation}\label{CanonicalMuskat}
\Div(\TT z)=\Div\left(\begin{array}{ccc}
\Velocity_1 & \Velocity_2-\rho & 0 \\
\Velocity_2+\rho & -\Velocity_1 & 0 \\
\Aux_1 & \Aux_2 & \rho
\end{array}\right)=0
\end{equation}
in $\R^2\times(0,T)$. From this, it is clear that the associated wave cone is
\begin{equation}\label{LambdaIPM}
\Lambda_{\TT}=\{(\bar{\rho},\bar{\Velocity},\bar{\Aux})\in\R^5\,:\,
|\bar{\rho}|=|\bar{\Velocity}|\}.
\end{equation}
This was already observed in \cite{LIPM,ActiveScalar,RIPM}. For \textrm{(H1)}, we take $\Lambda=\Lambda_{\TT}$. 
For convenience, we briefly recall how it is proved in \cite{LIPM} to be sure that the directions are not of the form $(0,\xi_0)$. 
From \eqref{CanonicalMuskat} it is natural to consider the potential
$$\mathbf{P}(\phi,\varphi)=\left(\begin{array}{ccc}
2\partial_{12}\phi & (\partial_{22}-\partial_{11})\phi-\Delta\phi & 0 \\
(\partial_{22}-\partial_{11})\phi+\Delta\phi & -2\partial_{12}\phi & 0 \\
-\partial_{t1}\phi-\partial_2\varphi & -\partial_{t2}\phi+\partial_1\varphi & \Delta\phi
\end{array}\right),$$
for $\phi,\varphi\in\Cont{3}(\R^{3})$ (notice
$\Div\mathbf{P}(\phi,\varphi)=0$).
Let $\bar{z}=(\bar{\rho},\bar{\Velocity},\bar{\Aux})\in\Lambda$ and $0\neq h\in\Cont{1}(\T;[-1,1])$ with $\int h=0$. Take $H\in\Cont{3}(\T;[-1,1])$ such that $H''=h$. Let $\xi=(\zeta,\xi_0)\in\Sp^1\times\R$ and $a,b\in\R$ to be determined. Consider
$$\phi_k(y)=\tfrac{a}{k^2}H(k\xi\cdot y)
,\quad\varphi_k(y)=\tfrac{b}{k}H'(k\xi\cdot y).$$
Then,
$$\mathbf{P}(\phi_k,\varphi_k)=\left(\begin{array}{ccc}
2a\zeta_1\zeta_2 & -2a\zeta_1^2 & 0 \\
2a\zeta_2^2 & -2a\zeta_1\zeta_2 & 0 \\
-a\zeta_1\xi_0-b\zeta_2 & -a\zeta_2\xi_0+b\zeta_1 & a
\end{array}\right)h(k\xi\cdot y).$$
For our purpose, take $a=\bar{\rho}$. Since $\bar{z}\in\Lambda$, if $\bar{\rho}=0$, then $\bar{z}=(0,0,\bar{\Aux})$. Hence, for this $\Lambda$-direction we take $b=|\bar{\Aux}|$ and $\zeta\in\Sp^1$ such that $b\zeta^\perp=\bar{\Aux}$. Suppose now that $\bar{\rho}\neq 0$. Then, there is $\eta\in\Sp^1$ such that $\bar{\Velocity}=\bar{\rho}\eta$. This induces to take
$\zeta_1=\sqrt{\frac{1-\eta_2}{2}}$ and $
\zeta_2=\sqrt{\frac{1+\eta_2}{2}}$ 
(notice 
$2\zeta_1^2=1-\eta_2$, $2\zeta_2^2=1+\eta_2$ and $2\zeta_1\zeta_2=\eta_1$). Finally, since
%$$\det\left(\begin{array}{cc}
%-a\zeta_1 & -\zeta_2 \\
%-a\zeta_2 & \zeta_1
%\end{array}\right)=-a\neq 0,$$
$a\neq 0$, we can take $(\xi_0,b)\in\R^2$ solving
$$\left(\begin{array}{cc}
-a\zeta_1 & -\zeta_2 \\
-a\zeta_2 & \zeta_1
\end{array}\right)
\left(\begin{array}{c}
\xi_0 \\ b
\end{array}\right)=
\left(\begin{array}{c}
\bar{\aux}_1 \\ \bar{\aux}_2
\end{array}\right).$$
Therefore, for every $\psi\in\Cont{\infty}_c(\R^{3})$, we obtain a localized plane-wave solution $\tilde{z}_k$ 
$$\mathbf{P}(\psi\phi_k,\psi\varphi_k)
=\TT(\bar{z}h(k\xi\cdot y)\psi+\mathcal{O}(k^{-1}))=\TT \tilde{z}_k$$
where $\mathcal{O}$ only depends on $|\bar{z}|$, $|\xi|$ and $\{|D^\alpha\psi(y)|\,:\,1\leq|\alpha|\leq 2\}$.\\ %Set $h(\tau)=\sin(2\pi\tau)$.\\

\indent For \textrm{(H2)}, following \cite{RIPM}, we set
$$\tilde{\Kconstrain}=\Kconstrain^\Lambda,\quad
U_M=(\Kconstrain_M^\Lambda)^\circ.$$
By virtue of \cite[prop. 2.4]{RIPM}, $\Leb{\infty}_{\BSO}(\R^2;\Kconstrain^\Lambda)$ is (weak*) closed.
For convenience, we prove a more precise version of \cite[prop. 3.3]{RIPM}.

\begin{lema}\label{lemamejorado} Let $M>2$. Consider the semistrongly concave function $\Dfunc(z)=1-\rho^2$ (with $\Gfunc(z)=-2(\rho,0,0)$ and $\Hfunc(z)=\rho^2$) for $z=(\rho,\Velocity,\Aux)$. Then, there is an increasing function $\Phi_M\in\Cont{}((0,1];(0,\infty))$ so that, for all $z\in U_M$ there is a sizeable $\Lambda$-direction $\bar{z}=(\bar{\rho},\bar{\velocity},\bar{\aux})\in\Lambda$ $$\bar{\rho}^2\geq\Phi_M(1-\rho^2)$$
while the associated $\Lambda$-segment stays in $U_M$
$$z+[-\bar{z},\bar{z}]\Subset U_M.$$
\end{lema}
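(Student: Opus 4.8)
The plan is to revisit the construction in \cite[prop. 3.3]{RIPM} and extract quantitative information on the size of the available $\Lambda$-direction. Recall the structure of $\Kconstrain_M^\Lambda$: a point $z=(\rho,\Velocity,\Aux)$ lies in $U_M=(\Kconstrain_M^\Lambda)^\circ$ precisely when the inequalities \eqref{Khull:1}--\eqref{Khull:5} are all \emph{strict}. The key geometric fact, already used in \cite{RIPM}, is that $\Kconstrain$ is obtained from $\Kconstrain_M^\Lambda$ by a sequence of $\Lambda$-directions, and in particular for each $z\in U_M$ the segment in the $\rho$-variable can be opened: one wants $\bar{z}=(\bar\rho,\bar\Velocity,\bar\Aux)\in\Lambda$, i.e. $|\bar\rho|=|\bar\Velocity|$, with $\bar\Aux$ chosen so that $z\pm\bar z$ still satisfies \eqref{Khull:1}--\eqref{Khull:5}. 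First I would write down explicitly, for a candidate direction of the form $\bar z=t(\mathrm{sgn}(\text{something}),\,\eta,\,\tfrac12(\rho\eta+\mathrm{sgn}(\cdot)\Velocity))$ with $|\eta|=1$ (the natural ``stacking'' direction that keeps $\Aux=\tfrac12(\rho\Velocity-(0,1))$ reachable), how large $t$ can be taken as a function of the five ``slacks''
$$1-|\rho|,\quad \tfrac12(1-\rho^2)-\left|\Aux-\tfrac12\rho\Velocity\right|,\quad M^2-(1-\rho^2)-|\Velocity|^2,\quad \tfrac{M}{2}(1\mp\rho)-\left|\Aux\mp\tfrac12\Velocity\right|.$$

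The main point is then to make the dependence on $\Dfunc(z)=1-\rho^2$ uniform. For $z\in U_M$ with $1-\rho^2$ bounded below by $\tau\in(0,1]$, all five slacks above are bounded below by an explicit positive quantity depending only on $\tau$ and $M$ (using $M>2$ to guarantee that \eqref{Khull:3}--\eqref{Khull:5} are not the binding constraints near $|\rho|=1$, exactly as in \cite{RIPM}); hence one gets $t\ge t_0(\tau,M)>0$ and may set $\bar\rho^2=t^2\ge\Phi_M(1-\rho^2)$ with
$$\Phi_M(\tau)=\inf\{\,\bar\rho^2 \text{ admissible at } z\,:\, z\in U_M,\ 1-\rho^2\ge\tau\,\}.$$
By construction $\Phi_M>0$ on $(0,1]$; to obtain that $\Phi_M$ is \emph{increasing} one replaces it, if necessary, by $\tilde\Phi_M(\tau)=\inf_{\tau'\le\tau}\Phi_M(\tau')$, which is nondecreasing, positive, and still a valid lower bound; continuity can be arranged by a further harmless regularization from below (e.g. mollifying the monotone envelope, or simply noting the explicit formula for $t_0(\tau,M)$ is already continuous in $\tau$). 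I would present the explicit $t_0(\tau,M)$ directly, which makes continuity and monotonicity transparent and avoids the envelope argument.

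Finally, the strict-inclusion statement $z+[-\bar z,\bar z]\Subset U_M$ rather than merely $\subset U_M$ is obtained by taking $t$ slightly smaller than the maximal admissible value — say a fixed fraction, $\tfrac12 t_0(\tau,M)$ — so that all five defining inequalities of $\Kconstrain_M^\Lambda$ remain strict along the whole segment $z+[-\bar z,\bar z]$ with a uniform gap, whence the segment is contained in a compact subset of the open set $U_M$. The verification that $\Dfunc(z)=1-\rho^2$ is semistrongly concave with the stated $\Gfunc,\Hfunc$ is immediate from the identity $1-(\rho+\bar\rho)^2=(1-\rho^2)-2\rho\bar\rho-\bar\rho^2$, which is exactly Definition \ref{semistronglyconcave} with $\gamma=2$. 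The only genuinely delicate point — and where I expect to spend the most care — is checking that near the degenerate set $|\rho|=1$ it is indeed the constraints \eqref{Khull:1}--\eqref{Khull:2} (and not the $M$-dependent ones) that control the size of $\bar\rho$, so that $\Phi_M$ does not collapse faster than necessary as $\tau\downarrow0$; this is precisely the place where the hypothesis $M>2$ enters and where the ``more precise version'' improves on \cite[prop. 3.3]{RIPM}.
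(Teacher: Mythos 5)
Your plan collapses at the step where you claim that for $z\in U_M$ with $1-\rho^2\ge\tau$ ``all five slacks\ldots are bounded below by an explicit positive quantity depending only on $\tau$ and $M$.'' That is false. Fixing $\tau$, take $z_n\in U_M$ with $1-\rho_n^2=\tau$ but $\bigl|\Aux_n-\tfrac12\rho_n\Velocity_n\bigr|\to\tfrac{\tau}{2}$ (or, similarly, $\bigl|\Aux_n\pm\tfrac12\Velocity_n\bigr|\to\tfrac{M}{2}(1\pm\rho_n)$): the slacks in \eqref{Khull:2} (resp.~\eqref{Khull:4}--\eqref{Khull:5}) go to zero while $1-\rho^2$ stays fixed. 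Since these slacks are not controlled by $1-\rho^2$, the infimum defining your $\Phi_M$ is not obviously positive — you would be taking an infimum over points arbitrarily near $\partial U_M$ — and the whole argument for positivity of $\Phi_M$ does not go through. The entire content of the lemma (and of the paper's proof) is to show that the \emph{maximal admissible} $\bar\rho^2$ at $z$ is nevertheless bounded below in terms of $1-\rho^2$ alone, even when other slacks are tiny; this requires choosing the direction $e\in\Sp^1$ and the component $\bar\Aux$ so that the segment is essentially \emph{tangent} to whichever constraint is nearly saturated, rather than hoping no constraint is nearly saturated.

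Concretely, the paper introduces the ``relative-error points'' $\omega=\tfrac{2}{1-\rho^2}(\Aux-\tfrac12\rho\Velocity)$ and $\omega_\pm=\tfrac{2}{M(1\pm\rho)}(\Aux\pm\tfrac12\Velocity)$, picks $e\in\Theta(\rho,\Velocity)$ minimizing $|e-\omega|$, and then splits into cases depending on whether $|\omega|>|\omega_+|\vee|\omega_-|$ or not, taking $\bar\Aux=\bar\Aux_0$ in the first case and $\bar\Aux=\bar\Aux_\pm$ in the second; the crucial cancellations (e.g.\ $\bar\Aux_0-\bar\Aux_\pm=\tfrac12(\rho\pm1)(e-\omega)$, and $|e-\omega|\le 1-|\omega_\pm|$ when $e=\omega/|\omega|$) are what make the segment admissible with $|\lambda|\gtrsim 1-\rho^2$. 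Your ansatz $\bar z=t(\pm1,\eta,\tfrac12(\rho\eta\pm\Velocity))$ fixes $\bar\Aux$ a priori and therefore does not leave room for this compensation; it would fail near the parts of $\partial U_M$ where \eqref{Khull:2}, \eqref{Khull:4} or \eqref{Khull:5} are nearly active. Note also that $M>2$ enters in the paper to control the relative-error geometry (e.g.\ to conclude $e=\omega/|\omega|$ when $|\omega|>|\omega_\pm|$), not merely to make the $M$-constraints non-binding near $|\rho|=1$. The remarks in your last paragraphs — verifying semistrong concavity from $1-(\rho+\bar\rho)^2=(1-\rho^2)-2\rho\bar\rho-\bar\rho^2$, obtaining $\Subset$ rather than $\subset$ by shrinking $t$, and repairing monotonicity/continuity of $\Phi_M$ by a monotone envelope — are all fine, but they are the routine part; the lower bound itself is missing.
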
 
\begin{proof} Let $z=(\rho,\Velocity,\Aux)\in U_M$. We want to find a suitable $\bar{z}=(1,e,\bar{\Aux})\in\Lambda$ such that 
	$$z_\lambda=z+\lambda\bar{z}\in U_M$$
	for all $|\lambda|^2\leq \Phi_M(1-\rho^2)$. For \eqref{Khull:1}, it is easy to show that there is an universal constant $0<c_0<\frac{1}{2}$ such that
	$$\tfrac{1}{2}\leq\frac{1\pm\rho_\lambda}{1\pm\rho}\leq 2$$
	for all $|\lambda|\leq c_0(1-\rho^2)$. Thus, we just need to control the other conditions. First assume that $|\Velocity|\geq|\rho|$. Denote 
	$$\Aux_e=\tfrac{1}{2}\rho\Velocity+\tfrac{1}{2}(1-\rho^2)e,
	\quad e\in\Sp^1.$$
	Consider the compact set
	$$\Theta(\rho,\Velocity)=\left\{e\in\Sp^1\,:\,\left|\Aux_e\pm\tfrac{1}{2}\Velocity\right|\leq\tfrac{M}{2}(1\pm\rho)\right\}.$$
	Since
	\begin{align*}
		\left|\Aux_e\pm\tfrac{1}{2}\Velocity\right|^2&=
		\tfrac{1}{4}(1\pm\rho)^2|\Velocity|^2+\tfrac{1}{4}(1-\rho^2)^2\pm
		\tfrac{1}{2}(1\pm\rho)(1-\rho^2)\Velocity\cdot e,
	\end{align*}
	the direction $e\in\Sp^1$ belongs to $\Theta$
	if and only if
	$$\mp2(1\mp\rho)(\rho
	-\Velocity\cdot e)\leq	M^2-|\Velocity|^2-(1-\rho^2).$$
	Since $|\Velocity|\geq|\rho|$, there is $e\in\Sp^1$ such that $\Velocity\cdot e=\rho$, so $\Theta\neq\emptyset$.
	In particular, for every $e\in\Theta$, since
	$(1-\rho^2)|\rho-\Velocity\cdot e|\leq M^2-|\Velocity|^2-(1-\rho^2),$
	for \eqref{Khull:3} we have
	\begin{align*}	M^2-|\Velocity_\lambda|^2-(1-\rho_\lambda^2)
		&=M^2-|\Velocity|^2-(1-\rho^2)+2\lambda(\rho-\Velocity\cdot e)\\
		&\geq(1-2c_0)(M^2-|\Velocity|^2-(1-\rho^2)).
	\end{align*}
	Consider the relative-error points
	$$\omega=\tfrac{2}{1-\rho^2}\left(\Aux-\tfrac{1}{2}\rho\Velocity\right),\quad\omega_{\pm}=\tfrac{2}{M(1\pm\rho)}\left(\Aux\pm\tfrac{1}{2}\Velocity\right),$$
	which satisfy $|\omega|,|\omega_{\pm}|<1$.
	Since $\Theta$ is non-empty, we can select the direction $e\in\Theta$ minimizing $|\Aux-\Aux_e|=\frac{1}{2}(1-\rho^2)|e-\omega|$.
	Now, straightforward computations yield
\begin{align}
\label{H2error1}\Aux_\lambda-\tfrac{1}{2}\rho_\lambda\Velocity_\lambda&=
\tfrac{1}{2}(1-\rho_\lambda^2)\omega+\lambda\left(\bar{\Aux}-\bar{\Aux}_0\right)+\tfrac{1}{2}\lambda^2(\omega-e),\\
\label{H2error2}\Aux_\lambda\pm\tfrac{1}{2}\Velocity_\lambda&=
\tfrac{M}{2}(1\pm\rho_\lambda)\omega_{\pm}+\lambda\left(\bar{\Aux}-\bar{\Aux}_{\pm}\right),
\end{align}
where
$$\bar{\Aux}_0=\tfrac{1}{2}(\Velocity+\rho e)-\rho\omega,
\quad
\bar{\Aux}_{\pm}=\mp\tfrac{1}{2}e\pm\tfrac{M}{2}\omega_{\pm}.$$
First assume $|\omega|>|\omega_+|\vee|\omega_-|$. Since $M>2$, it is not difficult to show that $e=\frac{\omega}{|\omega|}$.
Take
$$\bar{\Aux}=\bar{\Aux}_0.$$
On the one hand, by \eqref{H2error1}, \eqref{Khull:2} holds if and only if
\begin{align*}\frac{|e-\omega|^2}{1-|\omega|^2}\left(\frac{\lambda^2}{1-\rho^2}\right)^2
-2\frac{1-\rho_\lambda^2}{1-\rho^2}\frac{(e-\omega)\cdot \omega}{1-|\omega|^2}\frac{\lambda^2}{1-\rho^2}<\left(\frac{1-\rho_\lambda^2}{1-\rho^2}\right)^2.
\end{align*}
Hence, there is an universal constant $0<c_1\leq c_0$ such that the above inequality holds for all $|\lambda|\leq c_1(1-\rho^2)$.
On the other hand, since
\begin{equation}\label{lemamejorado1}
\bar{\Aux}_0-\bar{\Aux}_{\pm}		
=\tfrac{1}{2}(\rho\pm 1)(e-\omega),
\end{equation}
by \eqref{H2error2}, \eqref{Khull:4} and \eqref{Khull:5} hold if and only if
\begin{align*}
\frac{|e-\omega|^2}{1-|\omega_{\pm}|^2}\lambda^2\pm 2M\frac{1\pm\rho_\lambda}{1\pm\rho}\frac{(e-\omega)\cdot\omega_{\pm}}{1-|\omega_{\pm}|^2}\lambda
<M^2\left(\frac{1\pm\rho_\lambda}{1\pm\rho}\right)^2.
\end{align*}
Since $|e-\omega|=1-|\omega|\leq 1-|\omega_{\pm}|$, there is an universal constant $c>0$ such that the above inequality holds uniformly for $|\lambda|\leq c$.
Otherwise, for some $\pm$ we have $|\omega_{\pm}|\geq|\omega|\vee|\omega_{\mp}|$. Take
$$\bar{\Aux}=\bar{\Aux}_{\pm}.$$
On the one hand, the $\pm$ condition of \eqref{Khull:4} and \eqref{Khull:5} is trivially checked because \eqref{H2error2} implies
$$\left|\Aux_\lambda\pm\tfrac{1}{2}\Velocity_\lambda\right|
=\tfrac{M}{2}(1\pm\rho_\lambda)|\omega_{\pm}|.$$
On the other hand, since
\begin{align*}&\bar{\Aux}_{\pm}-\bar{\Aux}_{\mp}
=\mp(e-\omega),
\end{align*}
by \eqref{H2error2}, the $\mp$ condition of \eqref{Khull:4} and \eqref{Khull:5} holds if and only if
\begin{equation}\label{H2error3}
\frac{|e-\omega|^2}{1-|\omega_{\mp}|^2}\left(\frac{\lambda}{1\mp\rho}\right)^2\mp M\frac{1\mp\rho_\lambda}{1\mp\rho}\frac{(e-\omega)\cdot\omega_{\mp}}{1-|\omega_{\mp}|^2}\frac{\lambda}{1\mp\rho}<\frac{M^2}{4}\left(\frac{1\mp\rho_\lambda}{1\mp\rho}\right)^2.
\end{equation}	
Finally, since \eqref{lemamejorado1}, by \eqref{H2error1} the condition \eqref{Khull:2} is equivalent with
\begin{equation}\label{H2error4}
\frac{|e-\omega|^2}{1-|\omega|^2}\left(\frac{\lambda^2+(\rho\pm1)\lambda}{1-\rho^2}\right)^2-2\frac{1-\rho_\lambda^2}{1-\rho^2}\frac{(e-\omega)\cdot \omega}{1-|\omega|^2}\frac{\lambda^2+(\rho\pm 1)\lambda}{1-\rho^2}<\left(\frac{1-\rho_\lambda^2}{1-\rho^2}\right)^2.
\end{equation}
Both \eqref{H2error3} and \eqref{H2error4} can be verified for $|\lambda|$ depending on $M$ and $1-\rho^2$. More precisely,
for \eqref{H2error4} consider $N=\{(\rho,\vect,\Aux)\in U_M\,:\, e\cdot\omega\geq|\omega|^2\textrm{ for some }e\in\Theta(\rho,\vect)\}$. Notice $e\cdot\omega\geq|\omega|^2$ defines the circumference of radius $\tfrac{1}{2}$ inside the unit-circumference which is tangential to $e$. If $z\in N$ we have $|e-\omega|^2\leq 1-|\omega|^2$ and also $|(e-\omega)\cdot\omega|\leq|e-\omega|^2+(1-e\cdot\omega)\leq 2(1-|\omega|^2)$, while outside it is not difficult to prove that $|\omega|\ll 1$ due to $M>2$. For \eqref{H2error3}, it is not difficult to show that there is $C$ depending continuously on $M$ and $1-\rho^2$ such that $C|e-\omega|\leq(1-|\omega_{\mp}|)$.\\
Finally, if $|\Velocity|\leq|\rho|$, notice that the condition \eqref{Khull:3} is easily checked
	\begin{align*}M^2-|\Velocity_\lambda|^2-(1-\rho_\lambda^2)
	\geq
		M^2-1-4|\lambda|\geq\tfrac{1}{2}(M^2-1)
	\end{align*}
	for $|\lambda|\leq\frac{1}{8}(M^2-1)$. Then, by taking $e\in\Sp^1$ minimizing $|e-\omega|$ and $\bar{\Aux}$ as before, we check the other conditions analogously.
\end{proof}

\indent For \textrm{(H3)}, it is shown in \cite{Mixing} that there exists $\breve{\Aux}$ such that $\breve{z}=(\breve{\rho},\breve{\Velocity},\breve{\Aux})$ becomes into a $\tilde{\Kconstrain}$-subsolution. We surround $\breve{z}$ in the topological space $\Sub$ of admissible perturbations (def. \ref{spacesubsolutions}). For this, we fix a finite family $\Fspace_0\subset\Fspace$ given by the triples $(\FF,1,\mapy)$:
\begin{enumerate}[1)]
\item\label{F01} $\FF(z)=\rho$ and $\mapy=\map$ with $0<\alpha<\tfrac{1}{r}=1$,
\item\label{F03} $\FF(z)=\velocity$ and $\mapy=\map$ with $0<\alpha<\tfrac{1}{r}=1$,
\item\label{F04} $\FF=\PP$ \eqref{Power} and $\mapy=\map$ with $0<\alpha<\tfrac{1}{r}=1$,
\item\label{F02} $\FF(z)=1\mp\rho$ and $\mapy(t)=\mathrm{id}_{\InterphaseMix(t)}$.
\end{enumerate}
%In \cite{RIPM} it is observed that there is a closed ball $\BB$ of $\Leb{\infty}(\R^2)$ satisfying the hypothesis \textrm{(H3)}. 

Therefore, we can apply theorem \ref{HP} to the Muskat-Mixing problem. With $\ref{F01}$, we deduce the ``linearly degraded macroscopic behaviour'' property (def. \ref{degradada}\ref{DMS3}). With $\ref{F02}$, since for every $z\in\Sub$ 
$$\int_Q(1\mp\rho(x,t))\dif x\notin\{0\}$$
for every cube $Q\subset\InterphaseMix(t)$ with rational extreme points at each $t\in(0,T]$, necessarily, for every $z\in\SSub$, we have
$$\int_{\Omega}(1\mp\rho(x,t))\dif x\neq 0$$
for every non-empty bounded open  $\Omega\subset\InterphaseMix(t)$ at each $t\in(0,T]$.
Otherwise, it would be $\rho|_{\Omega}=\pm 1$ ($|\rho|\leq 1$ for states in $\SSub$). But then, since $\Omega$ is open, we would find an open cube $Q\subset\Omega$ with rational extreme points, which would contradict corollary \ref{corHP}. This is exactly the ``mix in space at each time slice'' property (def. \ref{degradada}\ref{DMS2}). Finally, we prove theorem \ref{thmprincipal2} with $\ref{F03}$ and $\ref{F04}$. For that we recall that $\PP$ is continuous from $\Leb{\infty}_{\BSO}$ to $\Leb{\infty}_{w^*}$ on bounded subsets of $\R^2$ as a consequence of the div-curl lemma (\cite{Tartar}).

%En \cite{LIPM} lidian con el h\'andicap de que $(0,0,0)\notin\Uconstrain$, utilizando configuraciones T4 ``alrededor'' de $(0,0,\aux_0)$ con $\aux_0=-\frac{1}{2}(0,1)$, con lo que prueban la existencia de infinitas soluciones Muskat no triviales. En \cite{RIPM} se calcula $\Kconstrain^\Lambda$ y se prueba H.2), lo que permite aplicar el teorema \ref{HPMuskat} al estado anterior para obtener el mismo resultado. Tambi\'en...

\section{Application to the vortex sheet problem}\label{sec:Other}

The motion of an ideal incompressible fluid is modelled by the \textbf{incompressible Euler equations}
\begin{align}
\partial_t\velocity+\Div(\velocity\otimes\velocity)+\nabla p & = 0, \label{Euler:1}\\
\Div\velocity & = 0,\label{Euler:2}
\end{align}
in $\R^2\times(0,T)$, where $\velocity$ is the incompressible velocity field and $p$ is the pressure. As for the Muskat-Mixing problem, $p$ may be ignored. As pointed in the ground-breaking work of De Lellis and Sz\'ekelyhidi Jr. (\cite{Eulerinclusion}), the incompressible Euler equations can be shown as a differential inclusion. In addition, as we have commented, they introduced in \cite{Onadmissibility} the ideas for the convex integration scheme that we have adapted for other evolution equations in the section \ref{sec:CI}. On the one hand, the space associated to the system of the stationary equation \eqref{Euler:2} is $\Leb{\infty}_{\mathrm{div}}(\R^2)$. On the other hand, the Cauchy problem is given by \eqref{Euler:1} for some initial data $\velocity_0\in\Leb{\infty}_{\mathrm{div}}(\R^2)$. A similar situation to the Muskat-Mixing problem with $f_0=0$ for these equations is given by the vortex sheet initial data 
\begin{equation}\label{VSinitialdata}
\velocity_0(x)=(\car{\Omega_{+}(0)}-\car{\Omega_{-}(0)})\mathbf{e}_1
\end{equation}
where $\mathbf{e}_1=(1,0)$ and $\Omega_{\pm}(0)=\{x\in\R^2\,:\, \pm x_2>0\}$.
In \cite{vortexsheetIEE}, Sz\'ekelyhidi Jr. applied the convex integration method to prove the existence of infinitely many weak solutions in $\Cont{0}([0,T];\Leb{\infty}_{\mathrm{div}}(\R^2))$ for the vortex sheet initial data \eqref{VSinitialdata} for which a
``\textbf{turbulence zone}'' defined by
$$\Omega_{\textrm{tur}}(t)=\{x\in\R^2\,:\, |x_2|<ct\}$$
appears, where $0<c<1$ represents its speed of growth. Similarly, the distinguished regions are  $\Omega_{\pm}(t)=\{x\in\R^2\,:\, \pm x_2>ct\}$.
For a suitable relaxation (\cite{Onadmissibility}), the subsolution is 
$\breve{\velocity}(x,t)=u(x,t)\mathbf{e}_1$ with
$$u(x,t)=\left\lbrace
\begin{array}{rl}
\pm 1, & \pm x_2>ct,\\[0.1cm]
\frac{x_2}{ct}, & |x_2|<ct.
\end{array}\right.$$
Now, the quantitative h-principle allows to select those solutions which best inherit the properties of the subsolution. More precisely, the following theorem holds.

\begin{thm} Let $\Efunc$ from \eqref{Efuncdef} and $\alpha\in[0,1)$. There exists infinitely many weak solutions $\velocity\in\Cont{}([0,T];\Leb{\infty}_{\mathrm{div}}(\R^2))$ to the incompressible Euler equations for the vortex sheet initial data \eqref{VSinitialdata} satisfying: the modulus of the velocity is constant $|\velocity|=1$, they are not affected outside the turbulence zone
$$\velocity(\cdot,t)|_{\Omega_{\pm}(t)}=\pm \mathbf{e}_1,$$
while the behaviour inside the turbulence zone obeys
$$\int_{\Omega}(1-\velocity_2(x,t))\dif x\int_{\Omega}(1+\velocity_2(x,t))\dif x\neq0$$
for every non-empty bounded open  $\Omega\subset\Omega_{\mathrm{tur}}(t)$, but displaying a linearly degraded macroscopic behaviour
$$\left|\dashint_Q\velocity(x,t)\dif x-\expected{L}\mathbf{e}_1\right|
\leq\Efunc(\expected{L},t)\Qfunc{Q}$$
for every non-empty bounded rectangle $Q=S\times ctL\subset\R\times(-ct,ct)$ at each $t\in(0,T]$.
\end{thm}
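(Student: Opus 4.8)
The plan is to recognise the vortex sheet problem as a particular case of the abstract scheme of Section~\ref{sec:CI}, and then to read off the statement from Theorem~\ref{HP} and Corollary~\ref{corHP}, just as the Muskat--Mixing results were obtained in Section~\ref{sec:Proof}. Following \cite{Eulerinclusion,Onadmissibility}, I would relax \eqref{Euler:1}\eqref{Euler:2} with the set of variables $z=(\velocity,u,q)$, where $u$ is a symmetric traceless $2\times 2$ matrix and $q\in\R$ (so $z\in\R^5$); here $\Div\velocity=0$ plays the role of the stationary system \eqref{Linearproblem} and $\partial_t\velocity+\Div u+\nabla q=0$ that of the conservation law in \eqref{Cauchyproblem}. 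The constraint is $\Kconstrain=\{(\velocity,u,q):u=\velocity\otimes\velocity-\tfrac12 I,\ |\velocity|=1\}$ (energy density $\tfrac12$), the wave cone $\Lambda_{\TT}$ is the Euler one, we take $\tilde{\Kconstrain}=\Kconstrain^{\Lambda_{\TT}}$ (weak$^*$ closed by \cite[sec.~3]{Onadmissibility}) and $U$ a bounded open subset of $\tilde{\Kconstrain}\setminus\Kconstrain$ as in \cite{Onadmissibility}, which provides the ambient ball $\BB$. The domain of perturbation is $\DomainP(t)=\Omega_{\mathrm{tur}}(t)=\{|x_2|<ct\}$ and the subsolution is $\breve{z}=(\breve{\velocity},\breve{u},\breve{q})$ with $\breve{\velocity}=u(x,t)\mathbf{e}_1$ and $\breve{u},\breve{q}$ as in \cite{vortexsheetIEE}; the hypothesis $0<c<1$ is precisely what makes $\breve{z}$ exact outside $\DomainP$ and $\breve{z}\in\Cont{}(\DomainP;U)$.

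Hypotheses \textrm{(H1)}--\textrm{(H3)} are inherited from \cite{Eulerinclusion,Onadmissibility}: \textrm{(H1)} is the localised plane-wave construction for the Euler equations, where one checks --- exactly as in \cite{Eulerinclusion} --- that the oscillation frequencies $\xi=(\zeta,\xi_0)$ satisfy $\zeta\neq 0$ and are therefore not of the excluded form $(0,\xi_0)$; \textrm{(H2)} holds with the semistrongly concave $\Dfunc(z)=\tfrac12-\tfrac12|\velocity|^2$ (so $\Gfunc(z)=-(\velocity,0,0)$ and $\Hfunc(w)=\tfrac12|\bar{\velocity}|^2$, $\gamma=2$) and the $\Phi$ furnished by the geometric lemma of \cite{Onadmissibility}; \textrm{(H3)} is the subsolution above, around which one builds $\Sub$ as in Definition~\ref{spacesubsolutions} from the finite family $\Fspace_0\subset\Fspace$ consisting of $(\velocity,1,\mapy)$ and $(1\mp\velocity_2,1,\mathrm{id}_{\Omega_{\mathrm{tur}}(t)})$, each with some $0<\alpha<\tfrac1r=1$, where $\DomainPy(t)=\R\times(-1,1)$ and $\mapy(s,\lambda,t)=(s,ct\lambda)$ (Jacobian $ct>0$, so $\mapy\in\Cont{}((0,T];\mathrm{Diff}^1(\DomainPy(t);\DomainP(t)))$). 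These $\FF$ are affine, hence weak$^*$-to-weak$^*$ continuous $\Leb{\infty}_{\mathrm{div}}\to\Leb{\infty}_{w^*}$ on bounded sets, and with $p=q=\infty$ and $r=1\neq\infty$ one has $\tfrac1p+\tfrac1q+\tfrac1r=1$; thus $(\FF,g,\mapy)\in\Fspace$. Note $\breve{\velocity}(\mapy(s,\lambda,t),t)=\lambda\mathbf{e}_1$ and $\dist{(s,\lambda)}{\partial\DomainPy(t)}=1-|\lambda|$, so that $\dashint_{\mapy(S\times L,t)}\breve{\velocity}(x,t)\dif x=\expected{L}\mathbf{e}_1$ and the abstract quantities $\Efunc(\expected{Q},t)$, $\Qfunc{\mapy(Q,t)}$ reduce exactly to $\Efunc(\expected{L},t)$, $\Qfunc{Q}$ of \eqref{Efuncdef}.

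Theorem~\ref{HP} then yields a residual set of exact solutions $z\in\SSub$ satisfying \textit{\ref{HP2}}: for the triple $(\velocity,1,\mapy)$ this is exactly $\left|\dashint_Q\velocity(x,t)\dif x-\expected{L}\mathbf{e}_1\right|\leq\Efunc(\expected{L},t)\Qfunc{Q}$ for every rectangle $Q=S\times ctL\subset\R\times(-ct,ct)$ and $t\in(0,T]$. Being exact, $z(\cdot,t)\in\Kconstrain$ a.e., so $|\velocity|=1$; since $z=\breve{z}$ off $\DomainP$, we get $\velocity(\cdot,t)|_{\Omega_{\pm}(t)}=\pm\mathbf{e}_1$; and substituting $u=\velocity\otimes\velocity-\tfrac12 I$ into the conservation law and setting $p=q$ shows $(\velocity,p)$ is a weak solution of \eqref{Euler:1}\eqref{Euler:2} with initial data \eqref{VSinitialdata}. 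For the mixing, I would apply Corollary~\ref{corHP} to the two triples $(1\mp\velocity_2,1,\mathrm{id})$ with $C=\{0\}$: if $z\in\Sub$ then $z(\cdot,t)$ takes values in $U$ on each cube $Q\Subset\Omega_{\mathrm{tur}}(t)$, hence $\sup_Q|\velocity|<1$ and $\int_Q(1\mp\velocity_2)\dif x>0$, so the hypothesis of the corollary holds; then, arguing as in Section~\ref{sec:Proof}, if $\int_\Omega(1\mp\velocity_2)\dif x=0$ for some non-empty bounded open $\Omega\subset\Omega_{\mathrm{tur}}(t)$ we would have $\velocity_2=\pm 1$ a.e.\ on $\Omega$ (recall $|\velocity|=1$), contradicting the conclusion of the corollary on any cube with rational vertices inside $\Omega$; hence the product of the two integrals does not vanish. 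Intersecting the finitely many residual sets so obtained gives a residual --- hence uncountable, $\SSub$ being a perfect complete metric space, since every exact element lies in $\SSub\setminus\Sub$ and is therefore a limit of pairwise distinct subsolutions --- family of solutions with all the stated properties. Finally, the case $\alpha=0$ is free: any solution produced for some $\alpha\in(0,1)$ already satisfies the bound with $\alpha=0$ because $\tfrac{1\wedge|A|^{\alpha}}{|A|}\leq\tfrac{1}{|A|}$.

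The main obstacle is, strictly speaking, not analytic: the substance of \textrm{(H1)}--\textrm{(H3)} is already contained in \cite{Eulerinclusion,Onadmissibility,vortexsheetIEE}. The points that genuinely require care are (a) verifying that the Euler subsolution $\breve{z}$ takes values in the \emph{interior} of the $\Lambda_{\TT}$-convex hull for every $t\in(0,T]$ --- this is where $c<1$ enters and where the explicit choice of $\breve{u},\breve{q}$ matters --- and (b) checking that the plane waves of \textrm{(H1)} truly oscillate in space-time, i.e.\ with frequency $(\zeta,\xi_0)$ having $\zeta\neq 0$, which is exactly what is needed for Lemma~\ref{lemmaYM} to apply inside the proof of Proposition~\ref{perturbationproperty}. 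Everything else is the bookkeeping of fitting the data into Definition~\ref{spacesubsolutions} and translating conclusions \textit{\ref{HP1}}, \textit{\ref{HP2}} and \textit{(iii)} into the claimed properties.
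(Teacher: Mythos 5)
Your proposal is correct and is exactly the argument the paper leaves implicit: Section~\ref{sec:Other} only sets up the subsolution and remarks that ``the quantitative h-principle allows to select those solutions,'' and you have faithfully filled in the bookkeeping --- the De Lellis--Sz\'ekelyhidi relaxation $(\velocity,u,q)$ split into stationary/conservation parts, $\tilde{\Kconstrain}=\Kconstrain^{\Lambda_{\TT}}$ and $U$ from \cite{Onadmissibility}, the semistrongly concave $\Dfunc(z)=\tfrac12-\tfrac12|\velocity|^2$, the map $\mapy(s,\lambda,t)=(s,ct\lambda)$ with $\DomainPy(t)=\R\times(-1,1)$, the family $\Fspace_0$ with $\FF(z)=\velocity$ and $\FF(z)=1\mp\velocity_2$, and the translation of \ref{HP1}, \ref{HP2} and Corollary~\ref{corHP} into the stated conclusions, mirroring Section~\ref{sec:Proof}. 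Your two ``points that require care'' (interior of the hull for $c<1$; spatial oscillation $\zeta\neq 0$ so Lemma~\ref{lemmaYM} applies) are precisely the ones the paper also singles out as nontrivial in the analogous Muskat verification.
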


\section*{Acknowledgements}

AC and DF were partially supported by ICMAT Severo Ochoa projects SEV-2011-0087 and
SEV-2015-556, and by the ERC grant 307179-GFTIPFD. 
DF and FM were partially supported by the grant MTM2017-85934-C3-2-P (Spain).
AC were partially supported by the grant MTM2014-59488-P (Spain) and
DF by the grant MTM2014-57769-P-1 and MTM2014-57769-P-3 (Spain). FM were partially supported by ICMAT Severo Ochoa project SEV-2015-0554 with a FPI predoctoral research grant of MINECO (Spain).

%\vfill\newpage
\centering

\begin{flushleft}
\quad\\
\textbf{\'Angel Castro}\\
Instituto de Ciencias Matem\'aticas-CSIC-UAM-UC3M-UCM\\
Email: angel\_castro@icmat.es\\[0.3cm]
\textbf{Daniel Faraco}\\
Departamento de Matem\'aticas\\
Universidad Aut\'onoma de Madrid\\
Instituto de Ciencias Matem\'aticas-CSIC-UAM-UC3M-UCM\\
Email: daniel.faraco@uam.es\\[0.3cm]
\textbf{Francisco Mengual}\\
Departamento de Matem\'aticas\\
Universidad Aut\'onoma de Madrid\\
Instituto de Ciencias Matem\'aticas-CSIC-UAM-UC3M-UCM\\
Email: francisco.mengual@estudiante.uam.es
\end{flushleft}

\end{document}